\documentclass[10pt]{amsart}
\usepackage{amsmath,amssymb,amsfonts, amscd}
\usepackage{amsbsy}
\usepackage{amsthm}
\usepackage{amstext}
\usepackage{amsopn}
\usepackage{hyperref}
\usepackage{mathrsfs}
\usepackage{graphicx}
\usepackage{latexsym}
\usepackage[T1]{fontenc}
\usepackage{color}

\usepackage[abs]{overpic}

\newcommand{\Hmm}[1]{\leavevmode{\marginpar{\tiny%
$\hbox to 0mm{\hspace*{-0.5mm}$\leftarrow$\hss}%
\vcenter{\vrule depth 0.1mm height 0.1mm width \the\marginparwidth}%
\hbox to 0mm{\hss$\rightarrow$\hspace*{-0.5mm}}$\\\relax\raggedright #1}}}

\newcommand{\Bc}{\mathcal{B}}

\newcommand{\Sc}{\mathcal{S}}
\newcommand{\Uc}{\mathcal{U}}

\newcommand{\ve}{\varepsilon}

\newtheorem{thm}{Theorem}[section]
\newtheorem{cor}[thm]{Corollary}

\newtheorem{lemma}[thm]{Lemma}

\theoremstyle{definition}

\newtheorem{eg}[thm]{Example}
\newtheorem*{rem}{Remark}

\newcommand{\R}{{\mathbb R}}
\newcommand{\C}{{\mathbb C}}

\newcommand{\N}{{\mathbb N}}

\newcommand{\al}{{\alpha}}

\newcommand{\eps}{{\varepsilon}}
\newcommand{\gm}{{\gamma}}

\newcommand{\Om}{{\Omega}}

\newcommand{\ka}{{\kappa}}
\newcommand{\si}{{\sigma}}
\newcommand{\lm}{{\lambda}}
\newcommand{\ph}{{\varphi}}

\begin{document}
\title[Eigenvalue asymptotics and unique continuation of eigenfunctions]{Eigenvalue asymptotics  and unique continuation of eigenfunctions on planar graphs}

\author[M. Bonnefont]{Michel Bonnefont} \address{Univ. Bordeaux, CNRS, Bordeaux INP, IMB, UMR 5251,  F-33400, Talence, France} \email{michel.bonnefont@math.u-bordeaux.fr}

\author[S. Gol\'enia]{Sylvain Gol\'enia} \address{Univ. Bordeaux, CNRS, Bordeaux INP, IMB, UMR 5251,  F-33400, Talence, France} \email{sylvain.golenia@math.u-bordeaux.fr}

\author[M. Keller]{Matthias Keller}
\address{Matthias Keller,  Universit\"at Potsdam, Institut f\"ur Mathematik, 14476  Potsdam, Germany}
\email{matthias.keller@uni-potsdam.de}

\begin{abstract} \noindent
We study planar graphs with large negative curvature outside of a finite set and the spectral theory of  Schr\"odinger operators on these graphs. We obtain  estimates  on the first and second order term of the eigenvalue asymptotics. Moreover, we prove a  unique continuation result for eigenfunctions and decay properties of general eigenfunctions. The proofs rely on a detailed {analysis} of the geometry which employs a Copy-and-Paste procedure based on the Gau\ss-Bonnet theorem.
\end{abstract}

\date{\today}
\maketitle


\section{Introduction}

In recent years  consequences  of  curvature bounds on the geometry and spectral theory of graphs have been intensively studied. For planar graphs a notion of curvature was introduced by Stone \cite{S1} going back to ideas  to Alexandrov and even Descartes. Recently, the study of this curvature  gained some momentum. For positive and non-negative curvature geometric consequences and harmonic functions were studied in \cite{CC,DM,HJ,HJL,S1,Z}. On the other hand, the geometry of negative and non-positive curvature was investigated in \cite{BP1,BP2,H,K2,KP,O,W} as well as for spectral consequences see \cite{BHK,KLPS,K1,K2}. For more recent work on sectional curvature of polygonal complexes see \cite{KPP}.

The  subject of this paper are  planar graphs  with large negative curvature  outside of a finite set and we are interested in the spectral theory of the Laplacian or more general that of Schr\"odinger operators. Especially, we study the asymptotics of eigenvalues, existence of eigenfunctions of compact support and  decay properties of eigenfunctions in general.

Let us discuss the results of the paper in the light of the existing literature. 
In \cite{K1} it was proven that if the curvature tends to negative infinity uniformly then the spectrum of the Laplacian is purely discrete.  The first order term of the eigenvalue asymptotics  was obtained \cite{BGK,G} for so called  sparse  graphs which include planar graphs. Here we get a hold on the second order term of the asymptotics of the eigenvalues in the case of planar graphs with uniformly decreasing curvature, see Theorem~\ref{c:spareinequality1} and Corollary~\ref{c:spareinequality2}.

Next, we turn to eigenfunctions. In \cite{KLPS,K2} unique continuation results for graphs with non-positive corner curvature were shown. However, these results are rather delicate and fail to hold for example for the Kagome lattice which has non-positive vertex curvature only, see \cite{KLPS}. Moreover, we also present an example that {the} failure of the curvature assumption on a finite set can lead to infinitely many compactly supported eigenfunctions, see Section~\ref{s:counter-example}. On the other hand, we show that if the curvature is sufficiently negative  outside of a finite set, then compactly supported eigenfunctions can occur in a finite region only, see Theorem~\ref{t:main2}.

Finally, we prove Agmon estimates as they were recently obtained in \cite{KePo} to give decay results on general eigenfunctions, Theorem~\ref{t:eigenfunctions}.

To prove these results we carefully study the geometry of graphs with large degree outside of finite set.  The underlying philosophy (which is made precise later in the paper) is that  we can continue such a planar graph to a tessellation with non-positive corner curvature after generously removing the set of positive curvature. 

While the geometric results are mainly phrased without mentioning curvature the proofs  make use of the Gau\ss-Bonnet theorem -- which essentially involves curvature.  
Firstly, the geometric results include statements about the  sphere structure of the graph sufficiently far outside. These considerations yield immediately a Cartan-Hadarmard type result about  continuation of geodesics, see Theorem~\ref{t:main} for these results. While we also recover some of the results of \cite{BP1,BP2} our approach is independent of theirs.  
Secondly we investigate the existence of spanning trees that are in some sense close to the original graphs. In particular, we show that there exist spanning trees that are bounded perturbations of the original graph, Theorem~\ref{t:spanning}. 

These geometric results are then applied to the study of the spectral theory of Schr\"odinger operators in the subsequent.

The paper is structured as follows. In the next subsection we introduce the basic notions and in the subsequent two subsections we present the geometric and spectral results.
The proof of the geometric result relies heavily on a so called Copy-and-Paste procedure presented in Section~\ref{s:CopyPaste}. Next, we closely study the case of triangulations in Section~\ref{s:triangulation}. Then the geometric results follow rather directly from considering triangulation supergraphs. A result about spanning trees is proven in Subsection~\ref{s:proofspanning} and the result about continuing a graph with negative curvature outside a finite set to a non-positively curved tessellation is shown in Section~\ref{s:general-planar}. The unique continuation result is also proven in this section.  
Finally, in Section~\ref{s:sparseinequality} discrete spectrum, the asymptotics of eigenvalues and the decay of eigenfunctions are proven.

\subsection{Set up and definitions}\label{s:setup}
Let an infinite connected simple graph  $G=(V,E)$ be given. The \emph{degree} $deg(v) $ of a vertex $v \in V$ is the number of adjacent vertices. We  assume $2\le\mathrm{deg}(v)<\infty$ for all $v\in V$. We call a sequence of vertices $(v_{0},\ldots, v_{n})$ a \emph{walk} of length $n$ if $v_{0}\sim\ldots\sim v_{n}$, where $v\sim w$ denotes that $v$ and $w$ are adjacent. 

We denote by $d$ the \emph{natural graph distance} on $G$ which is the length of the shortest walk between two vertices.

We fix a vertex $o\in V$ which we call the \emph{root}. For $r\ge0$, we define the \emph{sphere} with respect the natural graph distance by
\begin{align*}
S_{r}:=S_{r}(o):=\{v\in V\mid d(o,v)=r\}.
\end{align*}
The distance \emph{balls} are  defined as
\begin{align*}
B_{r}:=B_{r}(o):=\{v\in V\mid d(o,v)\leq r\}.
\end{align*}
For a vertex $v\in S_{r}$, $r\ge0$, we call $w\in S_{r\pm1}$, $v\sim w$ a \emph{forward/backward neighbor} and denote
\begin{align*}
\deg_{\pm}(v)&:=\{w\in S_{r\pm1}\mid w\sim v\}\quad\mbox{and}\quad
\deg_{0}(v):=\{w\in S_{r}\mid w\sim v\}.
\end{align*}

We assume that   $G$ is a planar graph which is embedded into an orientable topological surface $\mathcal{S}$ homeomorphic to $\R^{2}$. We assume that  the embedding of $G$ is \emph{locally finite}, that is for every point in $\mathcal{S}$ there is a neighborhood which intersects only finitely many edges.

From now on, when we speak about \emph{planar graphs} we always assume to have an  infinite connected simple planar graph which admits a locally finite embedding.

We associate to $G$ the set of \emph{faces} $F$ whose elements are defined as the closures of the connected components of $\mathcal{S}\setminus \bigcup E$, i.e., the connected components of $ \mathcal{S} $ after removing the edge segments.  The \emph{boundary} of a face $f$ is defined as the elements of $V$ whose image belongs to $f$. A \emph{boundary walk} of $ f $ is a closed walk which  visits every vertex of $ f $. The length of the shortest  boundary walk is called the \emph{degree} $\mathrm{deg}(f)$ of the face $f\in F$ and if no closed boundary walks exists we say that $ f $ has infinite degree. In what follows we do not distinguish between the graph and its embedding.

The set of \emph{corners} $C(G)$ is given as the set of pairs $(v,f)\in V\times F$ such that $v$ is contained in $f$. The \emph{degree} $|(v,f)|$ of a corner $(v,f)$ is  the minimal number of times the vertex $ v$ is
met by a boundary walk of $f$.
The \emph{corner curvature} $\ka_{C}:C(G)\to \R$ is given by
\begin{align*}
    \ka_{C}(v,f) :=\frac{1}{\mathrm{deg}(v)}-\frac{1}{2} +\frac{1}{\mathrm{deg}(f)}.
\end{align*}
This quantity was first introduced in \cite{BP1,BP2} for tessellations  and in \cite{K2} for general planar graphs.
Summing over all corners of a  vertex gives the \emph{vertex curvature} $\ka:V\to\R$
\begin{align*}
    \ka(v):=\sum_{(v,f)\in C(G)}|(v,f)|\ka_{C}(v,f)
\end{align*}
This quantity was first defined in \cite{S1} for tessellations following ideas of Alexandrov and for general planar graphs in \cite{K2}. In \cite{K2} a Gauß-Bonnet formula for this curvature is shown. Moreover, one has since $ \deg(v)=\sum_{f\ni v}|(v,f)| $, for all $ v\in V $
\begin{align*}
	\ka(v)=1-\frac{\mathrm{deg}(v)}{2} +\sum_{f\in F, f\ni v}|(v,f)|\frac{1}{\mathrm{deg}(f)}.
\end{align*}
The most interesting examples are tessellations which are discussed in a slightly more general form in Section~\ref{s:counter-example}.

We continue by introducing some more notation needed for the paper. For two walks $p=(v_{0},\ldots,v_{n})$ and $q=(w_{0},\ldots,w_{m})$ with $v_{n}=w_{0}$ or $v_{0}=w_{m}$, we denote by $p+ q$ the walk $(v_{0},\ldots,v_{n},w_{1},\ldots,w_{m})$ if $v_{n}=w_{0}$ or $(v_{1},\ldots,v_{n},w_{m-1},\ldots,w_{0})$ if $v_{n}=w_{m}$.
A walk $(v_{0},\ldots,v_{n})$  is called a \emph{path} if the vertices in a walk  are pairwise different except for possibly $ v_0=v_{n} $.  We say that $n$ is the {length} of the path. Moreover, a walk $(v_{n})  $ is called a \emph{geodesic} if $ d(v_{0},v_{n})=n $ for all $ n $.  
For a walk $p=(v_{0},\ldots,v_{n})$, we denote its vertex set by $V(p)=\{v_{0},\ldots,v_{n}\}$ and call it the \emph{trace} of $ p $. We call the vertices $v_{1},\ldots,v_{n-1}$ the \emph{inner vertices} and $v_{0},v_{n}$ the \emph{outer vertices} of the walk $p$. We call $p$ \emph{closed} if $v_0= v_{n}$. Note that the definition of a path does not allow repetition of vertices apart from the beginning and the ending vertex. To stress this we sometimes refer to closed paths also as simply closed paths.

Each simply closed path $p$ in the graph induces a simply closed curve with image $ \gamma(p) $ in the surface $\mathcal{S}$ where the graph is embedded.
By Jordan's curve theorem, this induces a partition of $\Sc$  as
$$ \Sc=\Bc (p)\cup \gamma(p) \cup \Uc(p), $$  where $\Bc(p)$  and $\Uc(p)$ are respectively the bounded and the unbounded connected component of $\Sc \setminus \gamma(p)$.

For a subset $W\subseteq V$, let $G_{W}$ be the \emph{induced subgraph} $(W,E_{W})$, where $E_{W}\subseteq E$ is the set of edges with beginning and end vertex in $W$. We say that $G_{W}$ has a \emph{closed boundary path} if there is a closed path $p$  within the graph $G_{W}$ such that $\Bc(p)\cap V=W$. 
Every vertex in $W$ not contained in a boundary path is called an \emph{interior vertex} of $G_{W}$. Indeed, boundary walks are unique up to enumeration.

When we consider two planar graphs $G$ and $G'$ at the same time we denote the degree on $G'$ by $\mathrm{deg}'$ or $\mathrm{deg}^{(G')}$, the curvatures by $\ka_{C}'$ or $\ka_{C}^{(G')}$, $\ka'$ or $\ka^{(G')}$ and the natural graph distance by $d'$ or $d^{(G')}$.


\subsection{Geometric results}

{In this work we first show that planar graphs with vertex degree large outside a finite set are in some sense really
 close to tree graphs. We shall consider two situations. First, we consider  $ \deg\ge 6 $ for all vertices except possibly for  the root 
 and secondly  $ \deg \ge 7 $ outside of a finite set.}

The first theorem  is a Cartan-Hadarmard type theorem.  This says that (sufficiently long) geodesics can be continued indefinitely which is equivalent to the function $d(o,\cdot)  $ not having local maxima (outside of a finite set). In the literature this is also referred to as absence or emptiness of the cut-locus (which is the set where $ d(o,\cdot) $ attains its local maxima), \cite{BP1,BP2}.

Furthermore the theorem includes a  remarkable structural statement about  distance spheres. To this end, we 
say a subset $ W $ of a planar graph $ G $ can be \emph{cyclically ordered} if there is planar supergraph {$G'$ of $G$} such that $ W $ is the trace of a simply closed path {of $G'$}.

\begin{thm}\label{t:main}Let $G=(V,E)$ be a planar graph, such that one of the following conditions hold:
	\begin{itemize}
		\item [(a)] $ \deg\ge 6 $ outside of the root $o$.
		\item[(b)]  $\deg\ge 7$ outside of some finite set.
	\end{itemize}	
Then, then there exists of a finite set $ K\subseteq V $ (which can be chosen to be $K= \{o\} $ in case (a)) such that  for all $ v\in V\setminus K $
		\begin{align*}
		\deg_{0}(v)\leq 2  \quad\mbox{and}\quad  1\leq    \deg_{-}(v)\leq 2.
		\end{align*}
In particular, any geodesic reaching $ V\setminus K $ from $ o $ can be continued indefinitely.
Furthermore, all the spheres outside of $ V\setminus K $ can be cyclically ordered.
\end{thm}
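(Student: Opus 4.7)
My plan is to reduce everything to the case where $G$ is a triangulation. In a triangulation every face has degree~$3$, so the vertex curvature simplifies to $\ka(v)=1-\deg(v)/6$; hypothesis (a) then reads $\ka\le 0$ outside $o$, and (b) reads $\ka\le -1/6$ outside a finite set. To carry out the reduction I would construct a triangulation supergraph $G'$ on the same vertex set by inserting chords inside each face of $G$, using the Copy-and-Paste procedure of Section~\ref{s:CopyPaste}. Adding edges only increases degrees, so the hypothesis is inherited by $G'$; the nontrivial point, which Copy-and-Paste is designed to handle, is to choose the chords so that $d^{(G')}(o,\cdot)=d(o,\cdot)$ on $V$, making the spheres $S_r$ coincide for $G$ and~$G'$.

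Assuming this reduction, I would fix $v\in S_r\setminus K$ with $r\ge 1$ and analyse the cyclic sequence of neighbours of~$v$ in~$G'$. In a triangulation, cyclically consecutive neighbours of~$v$ are themselves adjacent, so by the triangle inequality a neighbour in $S_{r-1}$ cannot be cyclically adjacent to one in $S_{r+1}$. The cyclic sequence therefore decomposes into alternating backward blocks (in $S_{r-1}$) and forward blocks (in $S_{r+1}$) separated by at least one neutral vertex (in $S_r$). A Gau\ss-Bonnet computation on the ball $B_r$, of the flavour developed in Section~\ref{s:triangulation}, combined with the sign of~$\ka$ and an induction on~$r$ anchored at the obvious cycle structure of $S_1$ around~$o$, then pins down the combinatorics and yields the sharp inequalities $\deg'_-(v)\le 2$ and $\deg'_0(v)\le 2$. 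The passage from (a) to $K=\{o\}$ and from (b) to a general finite~$K$ comes from chasing the curvature deficit through these Gau\ss-Bonnet sums.

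Once these bounds hold in~$G'$ the remaining statements are easy. Since $G$ is a subgraph of $G'$ on the same vertex set with the same root-distances, $\deg_-(v)\le\deg'_-(v)\le 2$ and $\deg_0(v)\le\deg'_0(v)\le 2$, hence $\deg_+(v)=\deg(v)-\deg_-(v)-\deg_0(v)\ge\deg(v)-4\ge 2$, so every geodesic that reaches $V\setminus K$ can be prolonged indefinitely. For the cyclic orderability, the induced subgraph of~$G'$ on $S_r$ has maximum degree~$2$, hence is a disjoint union of cycles and paths; the alternation-of-blocks structure together with connectivity of the boundary of~$B_r$ in the planar embedding then forces each $S_r$ to trace a single simple cycle in~$G'$, and since $G'$ is a planar supergraph of~$G$ this is exactly the definition of $S_r$ being cyclically ordered. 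The step I expect to be hardest is the distance-preservation in the passage $G\leadsto G'$: ensuring that the triangulating chords do not short-circuit paths between vertices of~$G$ is a genuinely geometric statement resting on the Gau\ss-Bonnet control of the face boundaries, and this is where the Copy-and-Paste procedure does the real work.
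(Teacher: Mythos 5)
Your overall architecture (triangulate $G$ by a distance-preserving supergraph $G'$ on the same vertex set, prove the degree bounds there, then pull them back to $G$ via $\deg_{0}\le\deg_{0}'$, $\deg_{-}\le\deg_{-}'$) is exactly the paper's route, but you have inverted which step is hard, and the actual core argument is missing. The distance-preserving triangulation is the easy part and has nothing to do with Copy-and-Paste: for any face that is not a triangle one can always find two non-adjacent boundary vertices $v_{0},v_{1}$ with $|d(v_{0},o)-d(v_{1},o)|\le 1$ (take a vertex of the face minimizing $d(\cdot,o)$ and two of its neighbours on that face), so chords can be inserted one at a time without ever creating shortcuts; this is Lemma~\ref{l:triangulation} and needs no curvature input at all. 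The Copy-and-Paste/Gau\ss-Bonnet machinery serves a different purpose: for a finite subgraph with a simply closed boundary path whose boundary degrees are too large it produces an interior vertex of positive curvature, and hence (Corollary~\ref{c:CopyPaste}) forces at least four boundary vertices of degree at most $3$.

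The genuine gap is in your treatment of the triangulation case itself. You assert that a Gau\ss-Bonnet computation on $B_{r}$ together with an induction ``anchored at the obvious cycle structure of $S_{1}$'' pins down $\deg_{0}'\le2$, $\deg_{-}'\le2$ and the cyclicity of spheres, but this is precisely what is not straightforward: a priori $d(o,\cdot)$ may have local maxima, distance spheres need not be cycles, and $V\setminus B_{r}$ may have bounded components, so there is no obvious anchor --- and in case (b) the finite bad set destroys any base case at $S_{1}$ altogether. The paper must introduce a modified sphere structure $\Sigma_{r}$ built from the unbounded component $U_{r}$ (Lemmas~\ref{l:V_r}--\ref{l:sphere}), decompose the annuli into elementary cells, and control the defect set $Z_{R}=\{v\in\partial\Sigma_{R}\mid\deg_{+}^{(\Sigma)}(v)=1\}$: for $\deg\ge6$ one shows that $Z_{R}\neq\emptyset$ forces $Z_{R-1}\neq\emptyset$, while for $\deg\ge7$ one needs the quantitative doubling $|Z_{R}|\ge2|Z_{R+1}|$ of Lemma~\ref{l:induction_step}, so that $Z_{R}=\emptyset$ once $R>r+\log_{2}|S_{r}|$; only then do the elementary cells have empty interior and the spheres become simple cycles with the stated degree bounds. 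Your alternating-block observation about the cyclic neighbourhood of a single vertex is correct but purely local and does not substitute for this global argument, and your final step (maximum degree $2$ on $S_{r}$ plus connectivity forces a single cycle) is likewise unjustified before dead ends and enclosed components have been excluded.
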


Observe that parts of the results of (a) are already included in \cite{BP1,BP2} since $ \deg\ge 6 $ implies $ \kappa_{C}\leq 0 $ but our proof  follows a completely different strategy. However, our techniques also allow  us to change our graphs by replacing  a finite set with a vertex such that they become graphs with $ \kappa_{C} $ everywhere. This is discussed in detail in Section~\ref{s:counter-example}.

The second  consequence is that   planar graphs with large vertex degree are  close to some of their spanning trees.

\begin{thm}\label{t:spanning} Let $G$ be a planar graph, such that one of the following conditions hold:
	\begin{itemize}
		\item [(a)] $ \deg\ge 6 $ outside of the root $o$.
		\item[(b)]  $\deg\ge 7$ outside of some finite set.
	\end{itemize}
 Then, there exists a spanning tree $T$ of $G$ such that outside of a finite set  the vertex degrees of $T$ and $G$ differs at most by $4$, where the finite set is empty in case (a). Furthermore, $ T $ and $ G $ have the same sphere structure. 
\end{thm}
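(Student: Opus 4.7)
The plan is to take $T$ to be a BFS tree of $G$ rooted at $o$: for each $v\neq o$ one backward neighbour in $S_{d(o,v)-1}$ is selected as its parent. Any such tree satisfies $d_{T}(o,v)=d_{G}(o,v)$, so $T$ and $G$ automatically share the same sphere structure. Let $K\subseteq V$ be the finite exceptional set produced by Theorem~\ref{t:main} ($K=\{o\}$ in case (a)). For $v\in S_{r}\setminus K$ with $r\geq 1$, writing $c(v)$ for the number of children of $v$ in $T$ and noting that $\deg_{T}(v)=c(v)+1$, one has
\[
\deg(v)-\deg_{T}(v)=\deg_{0}(v)+\bigl(\deg_{-}(v)-1\bigr)+\bigl(\deg_{+}(v)-c(v)\bigr).
\]
Theorem~\ref{t:main} bounds the first two summands by $2$ and $1$ respectively, so the whole problem reduces to choosing parents so that the forward loss $\deg_{+}(v)-c(v)$ is at most $1$ for every $v\notin K$.

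To choose the parents I use the cyclic orientation on each sphere given by Theorem~\ref{t:main}. For any $w\in S_{r+1}\setminus K$ with $\deg_{-}(w)=2$, planarity together with the cyclic ordering of $S_{r}$ forces the two backward neighbours of $w$ to be consecutive on $S_{r}$. The rule is then: declare the parent of such a $w$ to be the \emph{left} of its two backward neighbours, where ``left'' refers to the orientation induced by the planar embedding of $\Sc$. With this rule, a forward neighbour $w\in S_{r+1}$ of $v\in S_{r}\setminus K$ fails to pick $v$ as parent precisely when the backward pair of $w$ is $\{v_{\ell},v\}$, where $v_{\ell}$ denotes the cyclic left neighbour of $v$ on $S_{r}$. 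Hence the forward loss at $v$ equals the number of common forward neighbours of $v_{\ell}$ and $v$ in $S_{r+1}$.

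The crux is therefore the geometric lemma that outside $K$ any two cyclically consecutive vertices of a sphere share at most one forward neighbour. This is the point at which the negative-curvature hypothesis genuinely enters: a second common forward neighbour would close off a quadrilateral region whose corner and face contributions are incompatible with the Gauß--Bonnet balance that the Copy-and-Paste construction of Section~\ref{s:CopyPaste}, combined with the triangulation analysis of Section~\ref{s:triangulation}, forces on graphs satisfying $\deg\geq 7$ outside a finite set. Once this sharing lemma is established one obtains $\deg(v)-\deg_{T}(v)\leq 2+1+1=4$ for every $v\notin K$, which together with the automatic preservation of spheres proves the theorem. This sharing lemma is also the step I expect to be the main obstacle; everything else is bookkeeping on the BFS layers, and the difference between cases (a) and (b) enters only through the size and location of the exceptional set $K$.
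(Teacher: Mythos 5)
Your construction is in substance the same as the paper's: after Theorem~\ref{t:main} one discards the at most two horizontal edges at each vertex and, for every vertex of backward degree two, keeps only one backward edge chosen consistently via the cyclic orientation (the paper removes the edge from $v$ to its most-right forward neighbour when that neighbour has a second backward neighbour, which is the mirror image of your ``left parent'' BFS rule), and the bookkeeping $2+1+1=4$ together with the preservation of spheres is identical. So the reduction is fine; the problem is that the two geometric facts carrying the whole argument are asserted but not proved: (i) the two backward neighbours of a vertex $w$ with $\deg_-(w)=2$ are cyclically consecutive on its sphere, and (ii) two cyclically consecutive sphere vertices have at most one common forward neighbour. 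You explicitly defer (ii) as ``the main obstacle'' and guess that it needs a fresh Gau\ss--Bonnet/Copy-and-Paste argument, so as written the proof is incomplete exactly at its crucial step.

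Both facts are true and need no new curvature argument; they follow from structure the paper has already established, and closing the gap this way is essentially what the paper does. Work in the triangulation supergraph $G'$ of Lemma~\ref{l:triangulation}: it has the same spheres, outside a finite set its spheres are honest cycles and $\deg'_-\le 2$, $\deg'_0\le 2$, hence $\deg'_+\ge 3$ there (Theorem~\ref{t:triangulation_gen}), and in particular cyclically consecutive vertices $v_\ell,v\in S_r$ are joined by an edge of $G'$. If $w_1,w_2\in S_{r+1}$ were both adjacent to $v_\ell$ and $v$, then $\deg_-(w_i)=2$ forces these to be the only backward neighbours of $w_i$; the three arcs from $v_\ell$ to $v$ through $w_1$, through $w_2$, and along the edge $v_\ell v$ form a theta-curve, and in every possible planar arrangement one of its two bounded regions contains either some further vertex of $S_r$ or one of the $w_i$. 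Any such enclosed vertex carries an infinite forward path (since $\deg_+\ge 3$ outside the finite set) whose vertices lie in spheres of index $>r+1$ and hence can never leave the bounded region through its boundary vertices $v_\ell,v,w_1,w_2$; this contradicts local finiteness of the embedding. The same enclosed-region argument yields (i). Alternatively you may quote the form in which the paper uses this planarity input, namely that only the most-left and most-right forward neighbours of a vertex can have a second backward neighbour; that statement is exactly what makes your parent choice (equivalently the paper's edge removal) well defined and bounds the forward loss by one. Without (i) and (ii), or this substitute, the estimate $\deg-\deg_T\le 4$ is not justified, whereas with them your argument coincides with the paper's proof.
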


\begin{rem} The existence of spanning trees with certain properties is also treated in  \cite{BS,FP}. 
\end{rem}

\begin{rem}
	Our techniques of proof allow us to quantify the finite set in the theorems, Theorem~\ref{t:main} and Theorem~\ref{t:spanning}. In fact, given the radius of the ball out of which the degree is larger than $ 7 $, one can give an estimate on the radius of the ball such that outside of this ball the statements hold.
\end{rem}

\begin{rem} It would be  interesting to study whether the criteria $ \deg\ge6 $ outside of the root or $\deg\ge7$ outside of a finite set can be replaced by a weaker curvature type assumption. In the case of triangulations  $\deg\ge6$ is equivalent to $\ka_{C}\leq0$ and $\deg\ge7$ is equivalent to $\ka_{C}<0$. It remains an open question which of the results still hold for $ \deg\ge 6 $ outside of a finite set.
\end{rem}

The proof of the   geometrical  results above for general graphs  is given in Section~\ref{s:general-planar}. 
It will follow from the case of planar triangulations  by an  embedding into a triangulation supergraph.
The case of triangulation is investigated in Section \ref{s:triangulation}.
It uses a Copy-and-Paste procedure given in Section~\ref{s:CopyPaste} 
and a fine study of an adapted new sphere structure.


\subsection{Spectral consequences}
{In this section, we  turn to some spectral consequences for the Laplacian on $ \ell^{2}(V) $. We introduce some notation first.}

Denote the space of square summable real valued functions by $\ell^{2}(V)$, the corresponding scalar product by $\langle\cdot,\cdot\rangle$ and the norm by $\|\cdot\|$.

We consider the Laplace operator $\Delta=\Delta_{G}$   defined as
\begin{align*}
D(\Delta):=\{\ph\in\ell^{2}(V)\mid& (v\mapsto \sum_{w\sim v}(\ph(v)-\ph(w)))\in\ell^{2}(V)\}\\
    \Delta\ph(v)&:=\sum_{w\sim v}(\ph(v)-\ph(w)).
\end{align*}	
The operator is positive and selfadjoint (confer \cite[Theorem~1.3.1.]{Woj1}).

For a function $g:V\to\R$, we denote with slight abuse of notation the operator of multiplication  again by $g$  and
\begin{align*}
	g(\ph):=\sum_{v\in V}g(v)\ph(v)^{2}
\end{align*}
and for $\ph\in C_{c}(V)$ (which are the real  valued  functions of compact support).

 For two self-adjoint operators $A$ and $A'$ on $\ell^{2}(V)$ and a subspace $D_{0}\subseteq D(A)\cap D(A')$ we write $A\leq A'$ on  $D_{0}$ if $\langle A\ph,\ph\rangle\leq \langle A'\ph,\ph\rangle$ for $\ph\in D_{0}$.

For a function $q$, we let the positive and negative part be given by $q_{\pm}=\max\{\pm q,0\}$. We denote by $K_{\al}$, $\al\in(0,1]$, the class of potentials $q:V\to\R$ such that there is $C\ge0$  such that
\begin{align*}
     q_{-}\leq \al   (\Delta+q_{+})+C,\qquad \mbox{on $C_{c}(V)$.}
\end{align*}
As the operator  $(\Delta+q)\vert_{C_{c}(V)}$ is symmetric and bounded from below for $q$ in $K_{\al}$, $\al\in(0,1]$, there exists the Friedrich extension which we also denote  by $\Delta+q$.

For a self-adjoint operator $A$ which is bounded from below, we denote the discrete eigenvalues below the bottom of the essential spectrum by $\lm_{n}(A)$ in increasing order counted with multiplicity for all $ n\ge0 $ for which they exist.
We denote
\begin{align*}
	d_{n}=\lambda_{n}(\deg+q),\qquad n\ge0.
\end{align*}
Furthermore, we use the Landau notation $ o(a_{n}) $ for a sequences $(b_{n} ) $ such that $ b_{n}/a_{n}\to 0$ as $ n\to \infty $.

The following theorem is the main result about the asymptotics of eigenvalues.

\begin{thm}\label{c:spareinequality1} Let $G$ be a planar graph and $q\in K_{\al}$, $\al\in(0,1)$. Then the spectrum of $\Delta+q$ is purely discrete if and only if
\begin{align*}
    \sup_{K\subset V\,\mathrm{finite}}\inf_{v\in V\setminus K}(-\ka(v)+q(v))=\infty.
\end{align*}
In this case and if $q\ge0$
\begin{align*}
    d_{n}-2\sqrt{d_{n}}-o(\sqrt{d_{n}}) \leq \lm_{n}(\Delta+q)\le d_{n}+2\sqrt{d_{n}}+o(\sqrt{d_{n}}).
\end{align*}
\end{thm}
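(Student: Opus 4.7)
The plan is to reduce both assertions of the theorem to a two-sided quadratic-form comparison between $\Delta+q$ and the multiplication operator $\deg+q$. Concretely, I would establish that there is an exhausting sequence of finite sets $K_{n}\subset V$ and a sequence $\eps_{n}\to 0$ such that on $C_{c}(V\setminus K_{n})$ one has
\[
(\deg+q)-2\sqrt{\deg+q}-\eps_{n}(\deg+q)\;\le\;\Delta+q\;\le\;(\deg+q)+2\sqrt{\deg+q}+\eps_{n}(\deg+q),
\]
with all operators on the left and right understood as multiplication. Granted this, Persson's formula for the essential spectrum implies that $\Delta+q$ has purely discrete spectrum iff $\deg(v)+q(v)\to\infty$ outside every finite set, and the min-max principle applied to the form inequality gives the asymptotics, since the multiplication operator $\deg+q$ has $n$th eigenvalue $d_{n}$ and $x\mapsto x\pm 2\sqrt{x}$ is monotone.

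The curvature formula $\kappa(v)=1-\deg(v)/2+\sum_{f\ni v}|(v,f)|/\deg(f)$ combined with $\deg(f)\ge 3$ yields
\[
\frac{\deg(v)}{6}-1\;\le\;-\kappa(v)\;\le\;\frac{\deg(v)}{2}-1,
\]
so the conditions $-\kappa+q\to\infty$ and $\deg+q\to\infty$ are equivalent. The assumption $q\in K_{\alpha}$, $\alpha<1$, is what lets me absorb $q_{-}$ into $\Delta+q_{+}$ and so ensures that the growth of $\deg+q$ really controls the Friedrichs extension on both sides of the form comparison.

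The main effort therefore goes into the form inequality with the sharp constant $2$, which is where the geometric input from Theorems~\ref{t:main} and~\ref{t:spanning} is decisive. My plan is to fix the spanning tree $T$ produced by Theorem~\ref{t:spanning}. Since $\deg_{T}$ and $\deg_{G}$ differ by at most $4$ outside a finite set, the discrepancy $\Delta_{G}-\Delta_{T}$ involves $O(1)$ extra edges per vertex and contributes only $O(\sqrt{\deg+q})$ to the form, which can be absorbed into $\eps_{n}(\deg+q)$. On the tree itself, the cyclic sphere ordering of Theorem~\ref{t:main} gives the decomposition $A_{T}=A_{+}+A_{+}^{\ast}$ along forward/backward sphere neighbors, and because every non-root vertex of $T$ has exactly one parent one has $A_{+}^{\ast}A_{+}=\deg_{+}$ as a multiplication operator. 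A weighted Cauchy--Schwarz argument with weights $\alpha(v)=1/\sqrt{\deg_{+}(v)}$ then yields, for each vertex $v$, a pointwise contribution of $\sqrt{\deg_{+}(v)}+\sqrt{\deg_{+}(\father{v})}$ to the form bound of $A_{T}$, which in the regions where $\deg$ grows slowly along edges behaves as $2\sqrt{\deg_{T}-1}$ up to $o(\sqrt{\deg})$ errors. The main obstacle is thus establishing this sharp leading coefficient $2$: without the near-tree spanning tree and the control on sphere structure of Theorems~\ref{t:main} and~\ref{t:spanning}, one would only secure some constant $c>2$, which would be insufficient for the stated second-order asymptotics.
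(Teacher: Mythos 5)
Your high-level reduction (pass to the spanning tree of Theorem~\ref{t:spanning}, treat $\Delta_G-\Delta_T$ as a bounded perturbation, compare with the multiplication operator $\deg+q$, then use min-max, a Persson-type argument, and the bounds $\deg/6-1\le-\ka\le\deg/2-1$) is exactly the paper's skeleton. The gap is in the one step you yourself flag as the ``main effort'': the sharp constant $2$. You claim a \emph{pointwise} (multiplication-operator) inequality $\Delta+q\le(\deg+q)+2\sqrt{\deg+q}+\eps_n(\deg+q)$ on $C_c(V\setminus K_n)$, obtained by weighted Cauchy--Schwarz on the tree with weights $1/\sqrt{\deg_+}$. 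That computation produces the coefficient $\deg_T(v)+\sqrt{\deg_+(v)}+\sqrt{\deg_+(\father{v})}$ at $v$, and the term $\sqrt{\deg_+(\father{v})}$ is not controlled by $2\sqrt{\deg_T(v)}$ plus small errors: nothing in the hypotheses prevents a vertex of huge degree $D$ whose forward neighbors have much smaller degree $d$ (this is compatible with planarity, $\deg\ge7$, and even with $\deg\to\infty$). Testing with $\ph=\mathbf{1}_{\{o\}}-t\,\mathbf{1}_{\{\text{children}\}}$ on such a configuration shows the pointwise inequality with constant $2$ fails by an amount of order $D$, so your caveat ``in the regions where $\deg$ grows slowly along edges'' is precisely the uncontrolled case, and no argument is offered for it. The paper avoids this entirely: the constant $2$ is proved only in the \emph{integrated} form $\langle\ph,(\Delta+q)\ph\rangle\le\langle\ph,(\deg+q)\ph\rangle+2\sqrt{\langle\ph,(\deg+q)\ph\rangle}+C$ (square root of the form value, not of the function), and this is obtained not by Cauchy--Schwarz along edges but by the isoperimetric/co-area argument of Theorem~\ref{t:sparseinequality}(b), whose key input is that induced subgraphs of a tree satisfy $|E'_W|\le|W|$; the companion bound (a) comes from a ground-state representation with weight $m=\eps^{d(\cdot,o)}$ and the bipartite unitary $(-1)^{d(\cdot,o)}$.

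Even granting your displayed inequality, the error budget does not yield the stated asymptotics. An error of size $\eps_n(\deg+q)$ is a first-order relative error: for fixed $n$ it contributes $\eps_n d_k\gg\sqrt{d_k}$ as $k\to\infty$, and diagonalizing in $n$ does not help for the upper bound, because an inequality valid only on $C_c(V\setminus K_n)$ compares $\lm_k(\Delta+q)$ with eigenvalues of the Dirichlet restriction, whose indices are shifted upward by up to $|K_n|$ — the wrong direction for $\lm_k\le d_k+2\sqrt{d_k}+o(\sqrt{d_k})$. What makes the paper's min-max step work (via Theorem~\ref{t:minmax} with the monotone functions $x\mapsto x\pm2\sqrt{x}\pm C$) is that its two-sided bound holds on all of $C_c(V)$ with a \emph{uniform additive constant} $C$, which is automatically $o(\sqrt{d_n})$. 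So you should replace the pointwise Cauchy--Schwarz step by the co-area/isoperimetric argument (or some substitute giving a global bound with additive constant error); as it stands, the central inequality is false in the generality you need and the asymptotics would not follow from it anyway.
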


\begin{rem}(a) The first part of the theorem above was announced in \cite{K3} and is a unification of \cite[Theorem~3]{K1} and \cite[Corollary~21]{KL2} for Schr\"odinger operators on planar graphs.
The second part  improves the considerations of \cite{BGK,G} by giving the second order term on the eigenvalue asymptotics.\\
(b) For potentials $ q $ in $\bigcap_{\al\in(0,1)}K_{\al}$ instead of $ q\ge0 $, one has still the same first term of the eigenvalue asymptotics,  see \cite{BGK}.
\end{rem}

In the case of planar graphs with constant face degree we can even prove bounds with an even more geometric flavor. For $k\ge3$, denote
$$\gm(k):=2\pi\frac{2(k-2)}{k},$$
that is, if a face $f\in F$ with $\deg(f)=k$ is a regular $k$-gon, then $\gm(k)$ is the inner angle of $f$. Moreover,  denote
\begin{align*}
    \ka_{n}=-\lm_{n}(-\ka),\quad n\ge0,	
    \end{align*}
and in case there are infinitely many $ \ka_{n} $ (which are a decreasing sequence) we let
\begin{align*}
    \ka_{\infty}=\lim_{n\to\infty}\ka_{n}.
\end{align*}

\begin{cor}\label{c:spareinequality2}
 Let $G$ be a planar graph and
suppose the face degree is constantly $k$ outside of some finite set.
The operator $\Delta$ has purely discrete spectrum if and only if
\begin{align*}
    \ka_{\infty}=-\infty.
\end{align*}
In this case, $\ka_{n}\le0$ for large $n$ and
\begin{align*}
    -\frac{2\pi}{\gm(k)}\ka_{n}-2\sqrt{-\frac{2\pi}{\gm(k)}\ka_{n}} -o(\sqrt{\kappa_{n}})\leq \lm_{n}(\Delta)\leq  -\frac{2\pi}{\gm(k)}\ka_{n}+2\sqrt{-\frac{2\pi}{\gm(k)}\ka_{n}}+o(\sqrt{\kappa_{n}}).
\end{align*}
\end{cor}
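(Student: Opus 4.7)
The plan is to deduce the corollary directly from Theorem~\ref{c:spareinequality1} applied with $q=0$, by translating the face-degree hypothesis into a pointwise identity between $\deg$ and $\kappa$.

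First, I would establish the key identity. For any vertex $v$ all of whose incident faces have degree $k$, substituting $\deg(f)=k$ in
\[
\kappa(v)=1-\frac{\deg(v)}{2}+\sum_{f\ni v}|(v,f)|\frac{1}{\deg(f)}
\]
and using $\sum_{f\ni v}|(v,f)|=\deg(v)$ collapses the sum and gives
\[
\kappa(v)=1-\deg(v)\frac{k-2}{2k},\qquad\text{i.e.,}\qquad \deg(v)=\frac{2\pi}{\gamma(k)}\bigl(1-\kappa(v)\bigr),
\]
where the constant is identified via $\frac{2k}{k-2}=\frac{2\pi}{\gamma(k)}$. Let $K_0\subset V$ denote the finite set of vertices incident to at least one face of degree different from $k$; the identity holds for every $v\in V\setminus K_0$. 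The equivalence of purely discrete spectrum with $\kappa_\infty=-\infty$ is then immediate from the first part of Theorem~\ref{c:spareinequality1}, since the criterion $\sup_{K\text{ finite}}\inf_{v\notin K}(-\kappa(v))=\infty$ is exactly the statement that the multiplication operator $-\kappa$ on $\ell^2(V)$ has purely discrete spectrum with eigenvalues tending to $+\infty$, which is $\kappa_n\to-\infty$.

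For the asymptotics, the identity says that $\deg$ and $\frac{2\pi}{\gamma(k)}(1-\kappa)$ agree on $V\setminus K_0$ and hence differ by a finite-rank operator on $\ell^2(V)$. A routine min-max comparison (for multiplication operators this is just the observation that removing finitely many values from an ordered divergent sequence shifts the index by a bounded amount) yields
\[
d_n=\lambda_n(\deg)=-\frac{2\pi}{\gamma(k)}\kappa_n+O(1),\qquad n\to\infty.
\]
Inserting this into the two-sided bound of Theorem~\ref{c:spareinequality1} and expanding $\sqrt{d_n}=\sqrt{-\frac{2\pi}{\gamma(k)}\kappa_n}+O(1/\sqrt{-\kappa_n})$ absorbs the $O(1)$-shift in $d_n$ into the $o(\sqrt{-\kappa_n})$ remainder, producing the claimed estimate for $\lambda_n(\Delta)$.

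I do not expect a genuine obstacle here: once the combinatorial identity is in place, the corollary is essentially a rewriting of Theorem~\ref{c:spareinequality1}. The only point to check carefully is that all the $O(1)$-errors introduced by the finite-rank perturbation really are of smaller order than $\sqrt{-\kappa_n}$, which is automatic because $|\kappa_n|\to\infty$ by the hypothesis $\kappa_\infty=-\infty$.
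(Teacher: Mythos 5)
Your proposal is correct and is essentially the paper's own (very terse) proof: the pointwise identity $\ka=1-\frac{k-2}{2k}\deg$ outside the finite exceptional set, fed into Theorem~\ref{c:spareinequality1} with $q=0$. The only minor imprecision is your justification of $d_n=-\frac{2\pi}{\gm(k)}\ka_n+O(1)$: a bounded \emph{index} shift does not by itself bound the \emph{value} difference when eigenvalue gaps are unbounded; rather, since the two multiplication operators take identical values off the finite set $K_0$ up to the additive constant $\frac{2\pi}{\gm(k)}$, their eigenvalue counting functions agree (after the shift by that constant) for all large thresholds, so the ordered eigenvalue sequences eventually coincide up to $\frac{2\pi}{\gm(k)}$ --- which gives the $O(1)$ (in fact eventually exact) relation you need.
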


The proofs of the preceding theorems and corollaries are given in Section~\ref{s:sparseinequality}.

From the results above  we learn that in the case of uniformly unbounded curvature the spectrum consists of discrete eigenvalues. 
The following corollary is a unique continuation result telling us that outside of a compact set eigenfunctions have unbounded support.

\begin{thm}\label{t:main2} Let $G$ be a planar graph. Assume
\begin{align*}
   \ka_{\infty}=-\infty.
\end{align*}
Then, outside of a finite set there are no eigenfunctions of compact support of $\Delta+q$ for all $q\in K_{1}$. In particular, there are at most finitely many linearly independent eigenfunctions of compact support.
\end{thm}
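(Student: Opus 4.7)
The strategy is to peel off any compactly supported eigenfunction from the outside using the sphere structure from Theorem~\ref{t:main}(b). First, I would show that the spectral hypothesis $\ka_\infty=-\infty$ forces large degree off a finite set: the formula
\[
\ka(v)=1-\tfrac{\deg(v)}{2}+\sum_{f\ni v}\tfrac{|(v,f)|}{\deg(f)}\leq 1-\tfrac{\deg(v)}{6}
\]
together with $-\ka(v)\to\infty$ outside every finite set yields $\deg\geq 7$ outside some finite set $K_0$. Theorem~\ref{t:main}(b) then supplies a finite set $K\supseteq K_0$, contained in some ball $B_{R_0}$, such that for all $v\notin K$ we have $\deg_0(v)\leq 2$ and $1\leq\deg_-(v)\leq 2$, and each sphere $S_r$ with $r\geq R_0$ is cyclically ordered in a planar supergraph of $G$.

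Now fix $q\in K_1$, let $\ph\in C_c(V)$ satisfy $(\Delta+q)\ph=\lm\ph$, and suppose for contradiction that $\supp(\ph)\not\subseteq B_{R_0}$. Set $R:=\max\{r:\ph|_{S_r}\not\equiv 0\}>R_0$ and pick $v_0\in S_R$ with $\ph(v_0)\neq 0$; since $R>R_0$ we have $v_0\notin K$, so $\deg_+(v_0)=\deg(v_0)-\deg_-(v_0)-\deg_0(v_0)\geq 7-2-2=3$. In the cyclic ordering of $S_{R+1}$ the forward neighbors of $v_0$ form a contiguous arc $(u_1,\ldots,u_m)$ with $m\geq 3$. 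By planarity any extra backward neighbor $v'\neq v_0$ of some $u_j$ must be adjacent to $v_0$ in the cyclic ordering of $S_R$, so only the endpoints $u_1$ and $u_m$ can have $\deg_-(u_j)=2$. Hence some interior $u=u_j$, $1<j<m$, satisfies $\deg_-(u)=1$ with unique backward neighbor $v_0$.

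Evaluating the eigenvalue equation at this $u$: since $u\in S_{R+1}$ and $\ph$ vanishes on $S_{R+1}\cup S_{R+2}$,
\[
0=\lm\ph(u)=(\Delta+q)\ph(u)=-\sum_{w\sim u}\ph(w)=-\ph(v_0),
\]
contradicting $\ph(v_0)\neq 0$. Thus $\supp(\ph)\subseteq B_{R_0}$, and the second assertion follows because the compactly supported eigenfunctions embed into the finite-dimensional space $\ell^2(B_{R_0})$. The main obstacle is the planarity step: showing that \emph{interior} forward neighbors of $v_0$ can have only $v_0$ as a backward neighbor. This requires passing to a planar supergraph in which $S_R$ and $S_{R+1}$ become simple closed paths and arguing that an extra backward edge from an interior $u_j$ would necessarily cross the edges bounding the arc from $v_0$, a standard but nontrivial consequence of the cyclic sphere structure in Theorem~\ref{t:main}.
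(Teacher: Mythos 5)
Your overall strategy is the paper's own mechanism in different clothing: peeling a compactly supported eigenfunction off its outermost nonzero sphere by finding, for $v_0\in S_R$ with $\ph(v_0)\neq 0$, a forward neighbor $u$ whose unique backward neighbor is $v_0$, is exactly the injectivity of the transfer matrices $E_R$ that the paper exploits in Theorem~\ref{t:EFcompactsupport-triang} and Theorem~\ref{t:EFcompactsupport_general}. The genuine gap is the step you yourself flag as the "main obstacle": the claim that the interior forward neighbors of $v_0$ have $v_0$ as their only backward neighbor. Theorem~\ref{t:main}(b) as stated only says each sphere is the trace of a simply closed path in \emph{some} planar supergraph; it does not by itself provide one supergraph with the same sphere structure in which the cycles through $S_R$ and $S_{R+1}$ are nested so that your Jordan-curve argument can be run, nor does it control how the $G$-forward neighbors of $v_0$ sit on that cycle (in a general planar graph they need not even be consecutive). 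This is precisely the content the paper supplies with real work: Lemma~\ref{l:triangulation} (a triangulation supergraph with the identical sphere structure), the empty-interior statement $\Sigma_R=\partial\Sigma_R$ obtained from the elementary-cell and Copy-and-Paste analysis, and Lemma~\ref{l:injective}, whose proof uses the triangulation property to make the forward neighbors a path in $\partial\Sigma_{R+1}$ whose inner vertices admit no other backward neighbor; the transfer back to $G$ then uses that a vertex with a single backward neighbor in the supergraph keeps that vertex as its unique backward neighbor in $G$ because the sphere structures agree. Without importing or reproving this machinery, your central geometric claim remains unestablished, so the proof is incomplete at its crux (though the claim itself is true and your outline is salvageable exactly along the paper's lines).

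Two smaller points. The inequality you quote to extract $\deg\ge 7$ goes the wrong way: $\ka(v)\le 1-\deg(v)/6$ bounds curvature above by degree, whereas you need the trivial lower bound $\ka(v)\ge 1-\deg(v)/2$ (drop the positive face terms), giving $\deg(v)\ge 2-2\ka(v)$, together with unwinding $\ka_\infty=-\infty$ as $\sup_{K\ \mathrm{finite}}\inf_{V\setminus K}(-\ka)=\infty$; this is a one-line fix. Once the geometric lemma is in place, the rest of your argument (choice of the maximal radius $R$ of the support, the pointwise evaluation giving $-\ph(v_0)=0$, and finite-dimensionality of functions supported in a fixed ball) is correct and matches the paper's conclusion.
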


\begin{rem}\label{r:main2} (a) In \cite[Theorem~1]{KLPS}, \cite[Theorem~9]{K2} it is proven for tessellations and locally tessellating graphs that $\ka_{C}\leq0$ implies absence of compactly supported eigenfunctions for the operator $\Delta+q$.
We emphasize that Theorem~\ref{t:main2} is not a simple perturbation result of \cite{K2,KLPS}. Indeed, unique continuation is a rather subtle issue on discrete spaces. For example there are spaces  with $\ka\le0$ and which admit compactly supported eigenfunctions see e.g. \cite{BP1}. See also Example~\ref{e:example1} in Section~\ref{s:unique}.

(b) Validity of the theorem does not depend on the particular choice of $\Delta+q$ but holds for arbitrary nearest neighbor operators (i.e.,  with arbitrary complex coefficients for the edges and arbitrary complex potentials), see Theorem~\ref{t:EFcompactsupport-triang} in Section~\ref{s:unique}.

(c)  From the proof of Theorem~\ref{t:main2}, we can deduce an explicit estimate on the size of the set where compactly supported eigenfunctions can be supported, see Theorem~\ref{t:EFcompactsupport_general}.
\end{rem}

For the other eigenfunctions we obtain  a result on the decay which is based on Agmon type estimates as they are developed in  \cite{KePo}. In Section~\ref{s:sparseinequality}, we give a simplified proof which is adapted to the situation of planar graphs.

\begin{thm}\label{t:eigenfunctions} Let $G$ be a planar graph. Assume
	\begin{align*}
		\ka_{\infty}=-\infty.
	\end{align*}
	Then, any eigenfunction $ u\in D(\Delta) $ of $ \Delta $ satisfies
	\begin{align*}
		\al^{ d(o,\cdot)}u\in \ell^{2}(X,\deg),
	\end{align*}
for all $0< \al<{1 +\sqrt{2}}$.
\end{thm}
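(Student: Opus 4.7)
Proof plan. I would carry out an Agmon type exponential decay argument, tailored to planar graphs by using the sparse Hardy type inequality that underlies Theorem~\ref{c:spareinequality1}. Fix $\alpha\in(0,1+\sqrt 2)$ and set $\psi:=\alpha^{d(o,\cdot)}$. Truncate $\psi_n:=\min(\psi,n)$, so $\psi_n\uparrow\psi$ pointwise and each $\psi_n$ is bounded, hence $\psi_n u\in\ell^2(V)$.

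First, from the graph Green's formula combined with $\Delta u=\lambda u$ one derives the identity
\[
\|d(\psi_n u)\|^2 \;=\; \lambda \|\psi_n u\|^2 \;+\; \sum_{\{v,w\}}\bigl(\psi_n(v)-\psi_n(w)\bigr)^2 u(v)u(w).
\]
Since $|d(o,v)-d(o,w)|\le 1$ for $v\sim w$, we get $(\psi(v)-\psi(w))^2 \le (\alpha-1)^2\psi(v)\psi(w)$, and an elementary case analysis shows that the same bound passes to $\psi_n$, i.e.\ $(\psi_n(v)-\psi_n(w))^2\le (\alpha-1)^2\psi_n(v)\psi_n(w)$. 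The AM-GM inequality $2|ab|\le a^2+b^2$ applied with $a=\sqrt{\psi_n(v)}u(v)$, $b=\sqrt{\psi_n(w)}u(w)$ then yields the central edgewise estimate
\[
\Bigl|\sum_{\{v,w\}}(\psi_n(v)-\psi_n(w))^2 u(v)u(w)\Bigr| \;\le\; \frac{(\alpha-1)^2}{2}\,\|\psi_n u\|^2_{\deg}.
\]

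Next I would invoke the sparse Hardy type form inequality for planar graphs that underlies Theorem~\ref{c:spareinequality1} (see Section~\ref{s:sparseinequality} and \cite{BGK,G}): for every $\epsilon>0$ there is $C_\epsilon\ge0$ with
\[
\langle \Delta g,g\rangle \;\ge\; (1-\epsilon)\langle \deg g,g\rangle - C_\epsilon\|g\|^2, \qquad g\in C_c(V),
\]
which extends by density to the form domain of $\Delta$. Applied to $g=u$ this first forces $u\in\ell^2(V,\deg)$, and hence $\psi_n u\in\ell^2(V,\deg)$ for every $n$. Combining it with the previous identity and the edgewise estimate gives
\[
\Bigl(1-\epsilon-\frac{(\alpha-1)^2}{2}\Bigr)\|\psi_n u\|^2_{\deg} \;\le\; (\lambda+C_\epsilon)\|\psi_n u\|^2.
\]
Since $\kappa_\infty=-\infty$ forces $\deg\to\infty$ outside finite sets (recall $-\kappa\le \deg/2$), for any $D>0$ there is a finite $K_D\subset V$ with $\deg\ge D$ off $K_D$, whence $\|\psi_n u\|^2\le \|\psi_n u\,\mathbf{1}_{K_D}\|^2 + D^{-1}\|\psi_n u\|^2_{\deg}$. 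For $\alpha<1+\sqrt 2$ we have $(\alpha-1)^2<2$, so choosing $\epsilon$ small and then $D$ large makes the left-hand coefficient strictly positive and yields
\[
\|\psi_n u\|^2_{\deg} \;\le\; C\,\|\psi_n u\,\mathbf{1}_{K_D}\|^2 \;\le\; C\,\alpha^{2\,\mathrm{diam}(K_D)}\|u\|^2
\]
uniformly in $n$. Monotone convergence as $n\to\infty$ then produces $\|\psi u\|^2_{\deg}<\infty$, which is the conclusion.

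The main obstacle is the sparse Hardy inequality in a form sharp enough to yield the threshold $1+\sqrt 2$: that constant emerges precisely from the requirement $(\alpha-1)^2/2<1$ needed to keep the leading $\deg$-coefficient on the left positive after the weight perturbation. A subordinate technical point is the extension of the Hardy inequality from $C_c(V)$ to the bounded weighted vectors $\psi_n u\in\ell^2(V,\deg)$ and the final monotone-convergence step, both of which are routine once $C_c(V)$ is known to be a form core for $\Delta$.
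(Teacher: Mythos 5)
Your argument is correct and rests on the same two pillars as the paper's proof: the ground state representation applied to a weighted copy of the eigenfunction, combined with the form bound $\Delta\ge(1-\eps)\deg-C_{\eps}$ from Theorem~\ref{t:sparseinequality}~(a), with the threshold $1+\sqrt{2}$ emerging in both cases from the requirement $(\al-1)^{2}/2<1-\eps$. The only real difference is the truncation scheme: the paper cuts off in space, using compactly supported weights $\ph_{N}$ that grow like $\al^{d(o,\cdot)}$ on $B_{N}$ and ramp back down on $B_{2N}\setminus B_{N}$, so every sum is finite and no domain questions arise, at the price of a boundary term on $S_{2N+1}$ controlled via the boundedness of $\deg_{-}$ from Theorem~\ref{t:main}; you cut off in value, $\psi_{n}=\min(\al^{d(o,\cdot)},n)$, which avoids both the boundary term and any appeal to Theorem~\ref{t:main}, but obliges you first to prove $u\in\ell^{2}(V,\deg)$ (as you do, via the form inequality, using that $C_{c}(V)$ is a form core, which here follows from essential self-adjointness of $\Delta$ on $C_{c}(V)$, cf.\ the cited result of Wojciechowski) and to justify the ground state identity for bounded, non-compactly supported weights --- routine by absolute convergence once $u\in\ell^{2}(V,\deg)$ is in hand. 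Two harmless slips: in the edgewise estimate the AM--GM step should be applied with $a=\psi_{n}(v)u(v)$ and $b=\psi_{n}(w)u(w)$ (your choice $a=\sqrt{\psi_{n}(v)}\,u(v)$ yields a $\psi_{n}$-weighted rather than $\psi_{n}^{2}$-weighted right-hand side, though your displayed bound is the correct one), and the pointwise bound $(\psi(v)-\psi(w))^{2}\le(\al-1)^{2}\psi(v)\psi(w)$ holds only for $\al\ge1$ --- for $\al\le 1$ the claim is anyway immediate from $u\in\ell^{2}(V,\deg)$; likewise the final constant should involve $\max_{v\in K_{D}}d(o,v)$ rather than the diameter of $K_{D}$.
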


\section{Copy-and-Paste Lemmas}\label{s:CopyPaste}
In this section we prove a lemma that shows that certain subgraphs imply the presence of positive curvature. The proof works by making several copies of the subgraph and pasting them along the boundary path. The resulting graph can be embedded in the two dimensional sphere. We then apply a discrete Gau\ss-Bonnet theorem. Similar ideas can  be   found in \cite{K2}.

\begin{lemma}[Copy-and-Paste Lemma]\label{l:CopyPaste}
Let $G'=(V',E')$ be a subgraph of a planar graph  $G=(V,E)$ with a simply closed boundary path $p$ {in $G'$} such that there are (at most) three vertices $v_{0},v_{1},v_{2}\in V(p)$ such that
\begin{itemize}
  \item [(a)] $\deg'(v)\ge 4$ for all $v\in V(p)\setminus\{v_{0},v_{1},v_{2}\}$,
  \item [(b)] $\deg'(v_{0})\ge 3$,
  \item [(c)] $\deg'(v_{1}),\deg'(v_{2})\ge 2$.
\end{itemize}
Then, there is  $v\in V'\setminus V(p)$ such that
\begin{align*}
    \ka(v)=\ka'(v)>0.
\end{align*}
\end{lemma}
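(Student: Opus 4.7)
The plan is a proof by contradiction exploiting the copy-and-paste idea from the lemma's name together with the Gau\ss-Bonnet theorem. Suppose every interior vertex $v\in V'\setminus V(p)$ satisfied $\ka'(v)\le 0$. I would first construct a closed graph $\widehat{G}$ on the $2$-sphere by taking two topological copies of the closed disk carrying $G'$ and identifying them along $p$, so that boundary vertices are shared while interior vertices and interior edges are doubled. Since every interior face of $G'$ lies strictly inside one of the two hemispheres of $\widehat{G}$, the face structure of $\widehat{G}$ is just two copies of the interior face structure of $G'$; in particular every face of $\widehat{G}$ has degree at least $3$.

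The key computation is the upper bound for the curvature at a boundary vertex of $\widehat{G}$. A vertex $v\in V(p)$ with $d:=\deg'(v)$ has two boundary edges (shared between the two copies) and $d-2$ interior edges in each copy, giving $\deg^{(\widehat{G})}(v)=2d-2$ and the same number of face corners around $v$, all lying in interior faces of the two copies of $G'$. Using $\deg(f)\ge 3$ for every face,
\begin{align*}
\ka^{(\widehat{G})}(v)\le 1-\frac{2d-2}{2}+\frac{2d-2}{3}=\frac{4-d}{3}.
\end{align*}
Applying the three hypotheses: (a) gives $\ka^{(\widehat{G})}(v)\le 0$ for the ordinary boundary vertices, (b) gives $\ka^{(\widehat{G})}(v_0)\le 1/3$, and (c) gives $\ka^{(\widehat{G})}(v_i)\le 2/3$ for $i=1,2$, so
\begin{align*}
\sum_{v\in V(p)}\ka^{(\widehat{G})}(v)\le \frac{1}{3}+\frac{2}{3}+\frac{2}{3}=\frac{5}{3}.
\end{align*}

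Finally, I would invoke the discrete Gau\ss-Bonnet formula from \cite{K2} on $\widehat{G}$. Each interior vertex of $G'$ contributes its curvature $\ka'(v)$ once per copy, so
\begin{align*}
2=\chi(S^{2})=\sum_{w}\ka^{(\widehat{G})}(w)=2\sum_{v\in V'\setminus V(p)}\ka'(v)+\sum_{v\in V(p)}\ka^{(\widehat{G})}(v)\le 0+\frac{5}{3}<2,
\end{align*}
a contradiction. Therefore some interior vertex $v$ of $G'$ must satisfy $\ka'(v)>0$; since such a vertex has the same neighbors and the same incident faces in $G$ as in $G'$, one has $\ka(v)=\ka'(v)>0$.

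The main obstacle is the topological/combinatorial setup of the pasted graph $\widehat{G}$: I need to check that the identification yields an honest embedding on $S^{2}$, that the two copies of an interior edge between two vertices of $V(p)$ really lie in different hemispheres (so they do not enclose a digon face and the bound $\deg(f)\ge 3$ remains valid), and that the $2d-2$ face corners at a boundary vertex are distributed among interior faces of the two copies as claimed. Once this is in place, the inequality chain above is purely mechanical.
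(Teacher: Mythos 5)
Your argument is correct, and it takes a genuinely different and considerably lighter route than the paper. The paper's proof builds a $2\cdot 7\cdot 4=56$-fold Copy-and-Paste graph $G_3$ on the sphere, engineered so that every vertex coming from the boundary path ends up with degree at least $6$, whence $\ka^{(G_3)}\le 0$ there, and Gau\ss-Bonnet forces the positive total curvature $2$ to come from the (56 copies of the) interior vertices. You instead double $G'$ once along $p$ and bound the boundary contribution quantitatively: using $\deg^{(\widehat G)}(v)=2d-2$, the identity $\sum_{f\ni v}|(v,f)|=\deg^{(\widehat G)}(v)$ and $\deg(f)\ge 3$, you get $\ka^{(\widehat G)}(v)\le (4-d)/3$, so hypotheses (a)--(c) cap the total boundary curvature at $5/3<2=\chi(\mathbb{S}^2)$, and the interior must carry curvature at least $1/6$ in each copy (your contradiction formulation is equivalent to this direct estimate, and is in fact a slightly sharper quantitative conclusion than the paper's $2\le 56\sum_{v\in V'\setminus V(p)}\ka'(v)$). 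The verification items you flag are fine and are exactly the ones the paper also leaves implicit: the doubling of the closed disk bounded by $\gamma(p)$ is the standard double and gives a connected graph cellularly embedded in $\mathbb{S}^2$; since $\gamma(p)$ lies in the edge set of $\widehat G$, every face of $\widehat G$ is a copy of a bounded face of the simple graph $G'$ and hence has degree at least $3$ (so even when a chord between two boundary vertices gets doubled into a pair of parallel edges, no digon face arises, and the Gau\ss-Bonnet identity, being Euler's formula in disguise, still applies to the connected multigraph); and your worry about how the $2d-2$ corners are distributed is actually unnecessary, since only the corner-count identity and the face-degree bound enter. Both proofs tacitly assume $G'$ is drawn inside the closed disk bounded by $p$ (no edges of the induced subgraph embedded outside $\gamma(p)$), and both use that interior vertices have the same incident faces in $G'$ as in $G$, so you are not introducing any assumption beyond what the paper itself uses; what your approach buys is a one-step construction with a short corner-counting estimate in place of the paper's ten pastings, at the price of an explicit curvature computation at boundary vertices rather than the purely qualitative ``degree $\ge 6$ implies $\ka\le 0$'' step.
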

\begin{proof}
The proof uses by a copy and paste procedure applied to $G'$ with boundary path $ p =p_0+p_1+p_2$ which is illustrated in Figure~\ref{f:CopyPaste}.
 \begin{figure}[!h]
\scalebox{0.6}{\includegraphics{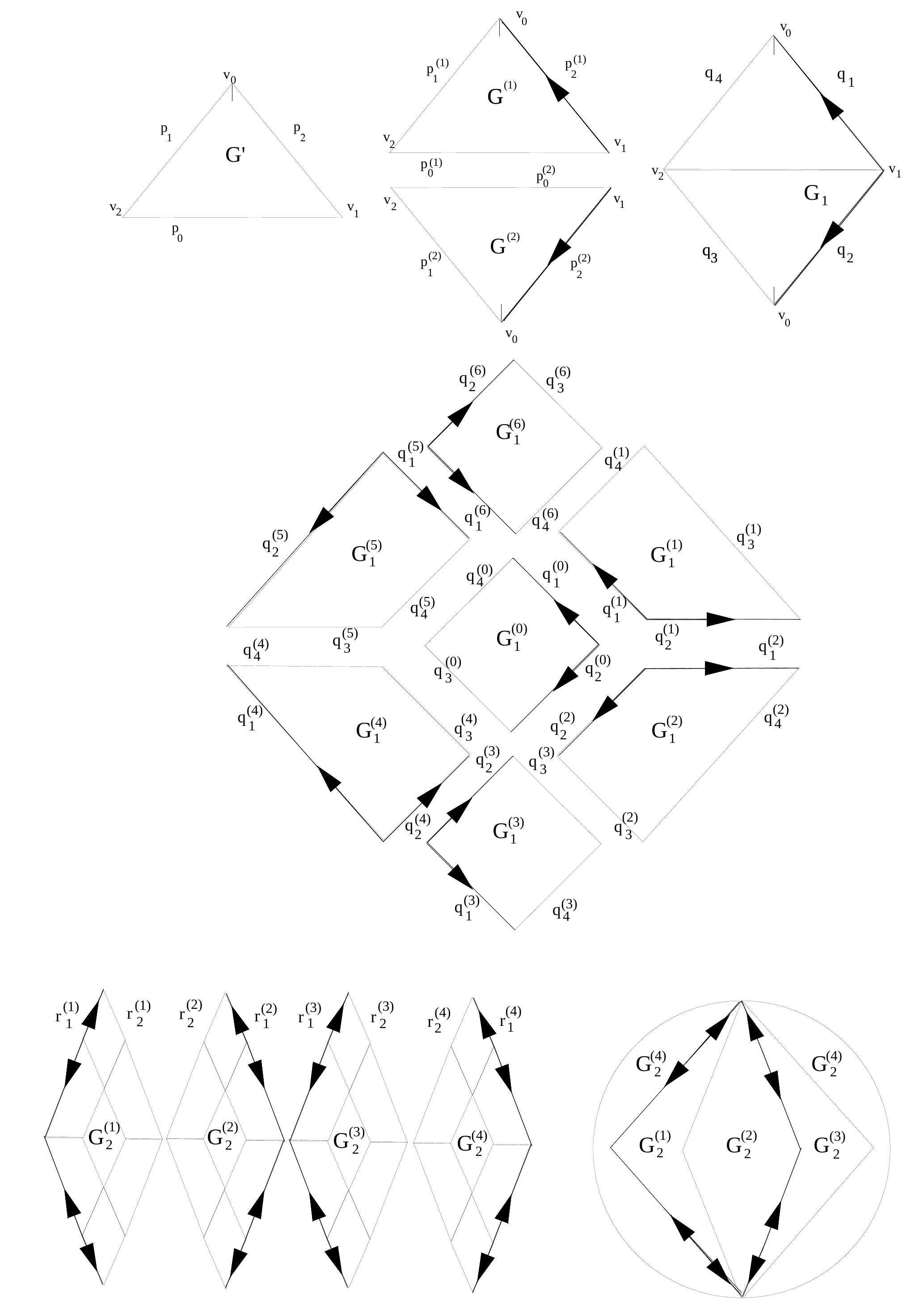}}
\caption{\label{f:CopyPaste} An
illustration of the copy and paste procedure. (The arrows only indicate the orientation of the subpaths.)}
\end{figure}

We denote the subpath of $p$ from $v_{1}$ to $v_{2}$ by $p_{0}$, the subpath of $p$ from $v_{2}$ to $v_{0}$ by $p_{1}$ the one from $v_{0}$ to $v_{1}$ by $p_{2}$. We make two copies $G^{(1)},G^{(2)}$ of $G'$ and denote the corresponding copies of $p_{j}$ by $p_{j}^{(1)}$ and $p_{j}^{(2)}$, $j=0,1,2$. We paste $G^{(1)}$ and $G^{(2)}$ along $p_{0}^{(1)}$ and $p_{0}^{(2)}$ (after reflecting $G^{(2)}$ -- where reflecting always means with respect to the path along the graphs are pasted).
We denote the resulting graph by $G_{1}$. We denote the boundary path of $G_{1}$ in the following way: Denote $p_{1}^{(1)}$ by $q_{4}$, $p_{1}^{(2)}$ by $q_{3}$, $p_{2}^{(2)}$ by $q_2$ and $p_{2}^{(1)}$ by $q_1$. 


Let us keep track of the vertex degrees in $G_{1}$:
\begin{itemize}
  \item All vertices in the glued path of $p_{0}^{(1)}$ and $p_{0}^{(2)}$ have degree at least $6$ in $G_{1}$.
  \item The copies of the vertices $v_{1}$ and $v_{2}$ in $G^{(1)}$  and $G^{(2)}$ that are merged have now vertex degree at least $3$ in $G_{1}$.
  \item The two copies of $v_{0}$ in $G_{1}$ have still vertex degree at least $3$.
  \item All other vertices in the boundary path have degree at least $4$.
\end{itemize}

Next, we make seven copies $G_{1}^{(j)}$, $j=0,\ldots,6$, of $G_{1}$ and we denote the corresponding subpaths of the boundary by $q_{1}^{(j)},\ldots,q_{4}^{(j)}$, $j=0,\ldots,6$. We paste:
\begin{itemize}
  \item $G_{1}^{(0)}$ to $G_{1}^{(1)}$  along $q_{1}^{(0)}$ and $q_{1}^{(1)}$ (after reflecting $G_{1}^{(1)}$),
  \item $G_{1}^{(0)}$ to $G_{1}^{(2)}$ along $q_{2}^{(0)}$ and $q_{2}^{(2)}$ (after reflecting $G_{1}^{(2)}$),
  \item $G_{1}^{(1)}$ to $G_{1}^{(2)}$ along $q_{2}^{(1)}$ and $q_{1}^{(2)}$ (which is possible as  $q_{2}^{(1)}$ and $q_{1}^{(2)}$ both originate from $p_{1}$),
  \item $G_{1}^{(2)}$ to $G_{1}^{(3)}$ along $q_{3}^{(2)}$ and $q_{3}^{(3)}$ (after rotating $G_{1}^{(3)}$),
  \item $G_{1}^{(0)}$ to $G_{1}^{(4)}$ along $q_{3}^{(0)}$ and $q_{3}^{(4)}$ (after reflecting $G_{1}^{(4)}$),
  \item $G_{1}^{(3)}$ to $G_{1}^{(4)}$ along $q_{2}^{(3)}$ and $q_{2}^{(4)}$,
  \item $G_{1}^{(0)}$ to $G_{1}^{(5)}$ along $q_{4}^{(0)}$ and $q_{4}^{(5)}$ (after reflecting $G_{1}^{(5)}$),
  \item  $G_{1}^{(4)}$ to $G_{1}^{(5)}$ along $q_{4}^{(4)}$ and $q_{3}^{(5)}$ (which is possible as  $q_{4}^{(4)}$ and $q_{3}^{(5)}$ both originate from $p_{2}$),
  \item $G_{1}^{(5)}$ to $G_{1}^{(6)}$ along $q_{1}^{(5)}$ and $q_{1}^{(6)}$ (after rotating $G_{1}^{(6)}$),
  \item  $G_{1}^{(1)}$ to $G_{1}^{(6)}$ along $q_{4}^{(1)}$ and $q_{4}^{(6)}$
\end{itemize}

We denote the resulting graph by $G_{2}$  and the boundary path $q_{1}^{(3)}+ q_{1}^{(4)}+ q_{2}^{(5)}+ q_{2}^{(6)}$ by $r_{1}$ and  $q_{3}^{(6)}+ q_{3}^{(1)}+ q_{4}^{(2)}+ q_{4}^{(3)}$ by $r_{2}$. We summarize some facts about  the vertex degrees in $G_{2}$:
\begin{itemize}
  \item  All vertices in $G_{2}$ that are not in the boundary path of $G_{2}$ but were in the boundary paths of $G_{1}^{(0)},\ldots,G_{1}^{(6)}$ have vertex degree at least $6$. (The inner vertices of $q_{1}^{(j)},\ldots q_{4}^{(j)}$, $j=0,\ldots,6$, had vertex degree at least $4$ before being pasted. The outer vertices of $q_{1}^{(j)},\ldots q_{4}^{(j)}$, which are originating from the vertices $u$, $u'$ and $v$, had vertex degree at least $3$ before and each is pasted to at least three copies.)
  \item The vertex in the boundary path in the intersection of $q_{1}^{(3)}$  and $q_{4}^{(3)}$ from $G_{1}^{(3)}$ (which originated from a copy of $v$) has vertex degree at least $3$. The same applies to the vertex in the intersection of $q_{2}^{(6)}$  and $q_{3}^{(6)}$ from $G_{1}^{(6)}$.
  \item All other vertices in the  boundary path of $G_{2}$ have vertex degree at least $4$.
\end{itemize}

Next, we make four copies $G_{2}^{(1)},\ldots, G_{2}^{(4)}$ of $G_{2}$. We paste:
\begin{itemize}
  \item $G_{2}^{(1)}$ to $G_{2}^{(2)}$ along $r_{2}^{(1)}$ and $r_{2}^{(2)}$  (after reflecting $G_{2}^{(2)}$),
  \item $G_{2}^{(2)}$ to $G_{2}^{(3)}$ along $r_{1}^{(2)}$ and       $r_{1}^{(3)}$,
  \item  $G_{2}^{(3)}$ to $G_{2}^{(4)}$ along $r_{2}^{(3)}$ and $r_{2}^{(4)}$ (after reflecting $G_{2}^{(4)}$)
  \item  after embedding the resulting graph into the two dimensional sphere $\mathbb{S}^{2}$ we paste $G_{2}^{(1)}$ to $G_{2}^{(4)}$ along $r_{1}^{(1)}$ and $r_{1}^{(4)}$.
\end{itemize}
The resulting graph $G_{3}=(V_{3},E_{3})$ is a planar graph that can be  embedded in  the sphere $\mathbb{S}^{2}$. By the Gau\ss-Bonnet formula, see e.g. \cite[Proposition~1]{K2} (or \cite{BP1} for tessellations),
\begin{align*}
\sum_{v\in V_{1}} \ka^{(G_{3})}(v)=\chi(\mathbb{S}^{2})=2,
\end{align*}
where $\chi $ denotes the Euler characteristic which equals $2$ for the sphere. Denote the vertices in $G_{3}$ that  result from {copies of }vertices in $V(p)$ of the original subgraph $G'$ by $V_{3}^{(p)}$. By our construction they
have degree at least $6$ in $G_{3}$. Thus, $\ka^{(G_{3})}(v)\leq0$ for any such vertex $ v\in V_{3}\setminus V_{3}^{(p)}$. (Note that the minimal face degree is $3$ due to triangles and, therefore, $ \ka_{C}^{(G_{3})}(v,f)\leq 1/6-1/2-1/3=0 $ which implies $\ka^{(G_{3})}(v)\leq0$).
On the other hand, for every vertex $v'\in V'\setminus V(p)$ there are $ 56 =2\cdot 7\cdot 4$ copies in  $V_{3}\setminus V_{3}^{(p)}$ and for any such copy $v$ of $v'$ we have $\ka^{(G_{3})}(v)=\ka^{(G')}(v)=\ka^{(G)}(v)$.
In conclusion, we have
\begin{align*}
    2=\sum_{v\in V_{3}} \ka^{(G_{3})}(v)\leq \sum_{v\in {V_{3}}\setminus V_{3}^{(p)}} \ka^{(G_{3})}(v)=56\sum_{v\in V'\setminus V(p)} \ka^{(G')}(v),
\end{align*}
which implies the statement.
\end{proof}

There is an immediate corollary which will plays a major role in the considerations  below.
\begin{cor}\label{c:CopyPaste} Let $G'=(V',E')$ be a subgraph of a planar graph  $G=(V,E)$ with a simply closed boundary path such that every interior vertex has degree larger or equal to $6$ in $G'$ and all but two vertices in the boundary path have degree at least $3$. Then, there are at least four vertices in the boundary path that have vertex degree at most $3$ in $G'$.
\end{cor}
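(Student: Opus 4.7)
The plan is to argue by contradiction using the Copy-and-Paste Lemma directly. Suppose, toward a contradiction, that at most three vertices of $V(p)$ have degree $\leq 3$ in $G'$. The goal is to exhibit a choice of $v_0, v_1, v_2 \in V(p)$ fulfilling hypotheses (a)--(c) of Lemma~\ref{l:CopyPaste}, which will then produce an interior vertex of strictly positive curvature and contradict the bound $\kappa \leq 0$ forced by $\deg \geq 6$ on interior vertices.

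First I would carefully book-keep the degrees on the boundary path. By hypothesis, at most two boundary vertices have degree $\leq 2$, and under the contradiction assumption at most three vertices of $V(p)$ have degree $\leq 3$; hence at most one additional boundary vertex has degree exactly $3$. Assign any existing degree-$2$ boundary vertices to the roles $v_1, v_2$ (if only one or zero such vertex exists, fill the remaining slots with arbitrary additional vertices of $V(p)$, whose degree is automatically $\geq 2$). Assign $v_0$ to a degree-$3$ boundary vertex if one exists; otherwise pick any vertex of $V(p)$, whose degree is then $\geq 4 \geq 3$. By construction $\{v_0, v_1, v_2\}$ absorbs every boundary vertex of degree $\leq 3$, so all vertices of $V(p) \setminus \{v_0, v_1, v_2\}$ have degree $\geq 4$, and (a)--(c) are satisfied. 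Lemma~\ref{l:CopyPaste} then yields some $v \in V' \setminus V(p)$ with $\kappa^{(G)}(v) = \kappa^{(G')}(v) > 0$.

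The final step is to derive the contradiction from the interior-vertex assumption. Since $v$ is an interior vertex of $G'$, all neighbors of $v$ in $G$ lie in $V'$ and all faces incident to $v$ coincide for $G$ and $G'$, so $\deg^{(G)}(v) = \deg^{(G')}(v) \geq 6$. Using $\deg(f) \geq 3$ for every face and the formula for $\kappa$ recalled in Section~\ref{s:setup}, I compute
\begin{align*}
\kappa^{(G)}(v) = 1 - \frac{\deg(v)}{2} + \sum_{f \ni v} \frac{|(v,f)|}{\deg(f)} \leq 1 - \frac{\deg(v)}{2} + \frac{\deg(v)}{3} = 1 - \frac{\deg(v)}{6} \leq 0,
\end{align*}
contradicting positivity. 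The only mild subtlety is the combinatorial bookkeeping in the second paragraph: one must check that the counts ``at most two of degree $\leq 2$'' and ``at most three of degree $\leq 3$'' combine so that every offending boundary vertex fits into exactly one of the three special slots of the Copy-and-Paste Lemma. Once that matching is done, the contradiction is immediate.
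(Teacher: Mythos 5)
Your proposal is correct and follows essentially the same route as the paper: negate the conclusion, place the (at most three) boundary vertices of degree $\leq 3$ into the roles $v_0,v_1,v_2$ of Lemma~\ref{l:CopyPaste}, and contradict the resulting positive-curvature interior vertex via $\deg\geq 6$ and $\deg(f)\geq 3$. One small bookkeeping slip: it is not true that at most one additional boundary vertex has degree exactly $3$ (if fewer than two vertices have degree $\leq 2$ there may be up to three of degree exactly $3$), but this is harmless since degree-$3$ vertices may also occupy the slots $v_1,v_2$, which only require degree $\geq 2$, so every offending vertex can still be absorbed into $\{v_0,v_1,v_2\}$.
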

\begin{proof}Suppose 
	 all but three vertices  in the boundary path have degree larger or equal to $4$. Then, the assumptions of the lemma above are fulfilled and, therefore, there exists a vertex in the interior with positive curvature. This however is impossible by the assumption that the vertex degrees of the interior vertices are larger or equal to $6$.
\end{proof}

\section{Triangulations}\label{s:triangulation}

In this section we  begin by proving Theorem \ref{t:main}
 and its consequences in the simpler case of triangulations.
  The case of general planar graphs will be  investigated in Section \ref{s:general-planar}.

To phrase the result in the  special case of triangulations we need some notation first.
  	We denote $\mathcal B_r$ the embedding of the vertices and the edges of the distance balls $B_r$, confer Section~\ref{s:setup}, into $\mathcal S$.
  	Since $\mathcal B_r$ is a  compact set, $\mathcal S \setminus \mathcal B_r$ admit only one unbounded connected component  that we denote by $\mathcal U_r$. We also denote $$ U_r:=  V \cap \mathcal U_r. $$
Observe that in general $ V $ only strictly includes the union $ B_{r}\cup U_{r} $ as there can be vertices  in $ \mathcal{B}_{r} $ which distance larger than $ r $ from the root which are however enclosed by vertices in $ B_{r} $.

\begin{thm}\label{t:triangulation} Let $G=(V,E)$ be a planar triangulation such that  one of the following assumptions holds:
\begin{itemize}
	\item [(a)] $\deg\ge 6$ outside of the root $o$.
	\item [(b)]
	  $\deg\ge 7$ outside of $B_{r}$ for some $r\ge0$. 
\end{itemize}
Let	   $v \in S_R\cap U_{r}$ with $R>r+  \log_{2}|S_{r}|$, (where $ r=0 $ in case $ \mathrm{(a)} $). Then,
	\begin{align*}
	\deg_{0}(v)=2  \qquad\mbox{and}\qquad  1\leq    \deg_{-}(v)\leq 2.
	\end{align*}
In particular, any geodesic reaching such a vertex $ v $ from $ o $ can be continued indefinitely.
Furthermore, all spheres  $ S_{R}\cap U_{r} $ are given by a cyclic path.
\end{thm}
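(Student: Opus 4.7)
The plan is to combine a Jordan-curve based definition of an ``adapted sphere'' at radius $R$ with Corollary~\ref{c:CopyPaste}, and then to run an induction on $R$ in which the $\log_2|S_r|$ term appears as a ``doubling budget''.

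First, I would replace the naive sphere $S_R$ by the outer boundary of a filled ball. For $R\ge r$, let $V_R:=V\setminus U_R$; since $\mathcal U_R$ is the unique unbounded component of $\mathcal S\setminus \mathcal B_R$, its topological boundary is a simply closed Jordan curve and the induced subgraph $G_{V_R}$ admits a unique simply closed boundary path $p_R$, whose vertices all lie in $S_R\cap U_{r-1}$. A first lemma I would prove is that, as soon as $R>r$, the trace of $p_R$ contains $S_R\cap U_r$, so that a cyclic ordering of $p_R$ automatically yields a cyclic ordering of $S_R\cap U_r$.

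Second, I would feed $G_{V_R}$ (or, in case (b), the further induced subgraph $G_{V_R\setminus B_{r-1}}$, glued along its innermost cycle) into Corollary~\ref{c:CopyPaste}. Under (a), all interior vertices of $G_{V_R}$ other than $o$ have degree $\ge 6$, and under (b) all interior vertices outside $B_r$ have degree $\ge 7\ge 6$; in each case the corollary forces the boundary path $p_R$ to contain at least four vertices of degree $\le 3$ in the subgraph. In a triangulation every $v\in S_R$ adjacent to two backward or two forward neighbours consecutive in its cyclic neighbour-ordering must close a triangle, which already forces $\deg_0(v)\le 2$ and restricts backward neighbours to a single arc; a vertex of subgraph-degree $\le 3$ on $p_R$ therefore means $\deg_-(v)+\deg_0(v)\le 3$, i.e.\ $\deg_-(v)\le 2$, $\deg_0(v)\le 2$, and consequently $\deg_+(v)\ge \deg(v)-3\ge 4$ in case (b). I would check separately that $\deg_0(v)<2$ is incompatible with $v$ lying on the simply closed boundary path $p_R$ (the two sideways neighbours are precisely the two ``rim'' neighbours on $p_R$), forcing $\deg_0(v)=2$ and $\deg_-(v)\ge 1$.

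The third step is the inductive passage from ``at least four good boundary vertices'' to ``every boundary vertex is good'', which is where the logarithmic bound $R>r+\log_2|S_r|$ should emerge. I would define, for $R\ge r$, a ``badness'' quantity $b_R$ equal to the number of vertices of $p_R$ that fail either $\deg_0=2$ or $1\le \deg_-\le 2$, with $b_r\le |S_r|$ trivially. Along the annular region $V_{R+1}\setminus V_R$ I would do a local face-by-face count: every bad vertex $v$ on $p_R$ contributes at most one bad successor on $p_{R+1}$, while every good vertex on $p_R$ contributes, via $\deg_+\ge \deg-4\ge 3$ in case (b) and the triangle constraint, at least two forward neighbours on $p_{R+1}$ that are good. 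Combining these two mechanisms with the Corollary~\ref{c:CopyPaste} inequality applied to the filled annulus should give $b_{R+1}\le \tfrac{1}{2}b_R$ (or a similar strict halving), so $b_R=0$ once $R-r>\log_2|S_r|$. Once $b_R=0$, $p_R=S_R\cap U_r$ is cyclic, every forward vertex has $\deg_+\ge 3$, and geodesic continuation follows.

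The obvious obstacle is the halving estimate $b_{R+1}\le b_R/2$: Corollary~\ref{c:CopyPaste} only furnishes \emph{four} low-degree boundary vertices, not a proportional bound, so the quantitative step almost certainly requires applying the Copy-and-Paste procedure not to the entire filled ball but to each maximal ``bad arc'' of $p_R$ separately. I would therefore expect the technical heart of the proof to consist in slicing $p_R$ at good vertices, treating each slice as its own subgraph with simply closed boundary path, and summing the corollary's contributions; the minimal-edge triangle bound $\kappa_C\le 0$ should then be sharpened into the needed doubling using the strict inequality $\deg\ge 7$ in case (b).
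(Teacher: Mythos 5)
Your overall architecture matches the paper's: a boundary cycle of the ``filled'' ball playing the role of the sphere (this is exactly the paper's $V_R=V(p_R)$ and the new sphere structure $\Sigma_R$, $\partial\Sigma_R$), the Copy-and-Paste Corollary~\ref{c:CopyPaste} producing low-degree boundary vertices, and a radius induction in which a bad-vertex count halves per step, which is where $\log_2|S_r|$ enters. However, the step you yourself flag as the obstacle is a genuine gap, and it is precisely the technical heart of the argument. Your local counting (``every bad vertex on $p_R$ contributes at most one bad successor on $p_{R+1}$, every good vertex contributes good forward neighbours'') yields at best $b_{R+1}\le b_R$, not $b_{R+1}\le b_R/2$, and applying Corollary~\ref{c:CopyPaste} to the whole filled ball or annulus only ever produces \emph{four} low-degree boundary vertices, with no control on where they sit; your proposed fix (slicing $p_R$ into bad arcs and summing contributions) is left entirely unsubstantiated, and it is not the mechanism that makes the proof work. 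The paper obtains the equivalent inequality in the \emph{inward} direction (Lemma~\ref{l:induction_step}): for each vertex $v$ at level $R+1$ with exactly one forward neighbour in the new sphere structure (the set $Z_{R+1}$), one builds two descending paths into the two adjacent elementary cells by repeatedly taking the ``third neighbour'' clockwise resp.\ counterclockwise around vertices of degree $\ge 7$, encloses a finite subgraph $G_v$ whose boundary satisfies the hypotheses of Corollary~\ref{c:CopyPaste}, and concludes that at least two of the four forced low-degree boundary vertices lie on $\partial\Sigma_R$, hence in $Z_R$; the subgraphs $G_v$ for distinct $v$ meet only along their boundary paths, so the produced vertices are distinct and $|Z_R|\ge 2|Z_{R+1}|$. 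Note also that the relevant notion of badness is $\deg_+^{(\Sigma)}=1$, not failure of $\deg_0=2$ or $1\le\deg_-\le 2$; the paper separately shows (Lemmas~\ref{l:emptyinterior} and~\ref{l:base_case}) that a failure of the desired degree bounds, or a nonempty elementary cell, forces $Z_{R-1}\neq\emptyset$, and then (Lemma~\ref{l:Z_Rempty}) that emptiness of $Z$ at one level yields the full conclusion outward. Without an argument of this type your halving claim is unsupported.

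A second, smaller point: you treat cases (a) and (b) with the same quantitative machinery, but the doubling genuinely needs $\deg\ge 7$ (your own $\deg_+\ge\deg-4\ge 3$ step is only available in case (b)). In case (a) the paper does not count at all: since $\deg_+(o)=\deg(o)\ge 3$, one has $Z_0=\emptyset$, and the one-step propagation $Z_R\neq\emptyset\Rightarrow Z_{R-1}\neq\emptyset$ of Lemma~\ref{l:base_case} already gives $Z_R=\emptyset$ for all $R$ under $\deg\ge 6$. Your plan should separate these two regimes, since under $\deg\ge 6$ alone no halving (and hence no finite-budget argument) is available.
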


\begin{rem}In Section~\ref{s:othercomp} we show that we can extend our techniques so that the results also hold for all $ v\in S_{R} $ instead of $ S_R\cap U_{r} $. However, the result as stated above is enough to prove our main results.
\end{rem}

To prove the theorem we employ the copy and paste procedures above. But before we have to introduce a new sphere structure that takes heed {of} the fact  that geodesics might not be continued to infinity.

\subsection{A new sphere structure}\label{s:sphere-structure}
We first introduce some notation.
Recall that for  each simply closed path $p$ in the graph induces a simply closed curve with image $ \gamma(p) $ in the surface $\mathcal{S}$ where the graph is embedded. Furthermore, recall that Jordan's curve theorem induces a partition of $\Sc$  as
$$ \Sc=\Bc (p)\cup \gamma(p) \cup \Uc(p), $$  with $\Bc(p)$  and $\Uc(p)$ being respectively the bounded and the unbounded connected component of $\Sc \setminus \gamma(p)$. 

We also denote $$ B(p):=V\cap \overline{\Bc(p)}=V\cap(\mathcal{B}(p)\cup\gamma(p))\;\;\mbox{ and }\;\;U(p):=V\cap \Uc(p).$$
Furthermore, recall that we denoted $ V(p):=V\cap \gamma(p)  $.

\begin{lemma}\label{l:decomposition} 
Let $ G $ be planar triangulation and $r\geq 0$.   The subgraph  induced by  $U_r$  is connected and infinite. Moreover  if $w \sim w'$ with $w\in U_r$ and $w'\notin B_r$, then $w'\in U_r$.
\end{lemma}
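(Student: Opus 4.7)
The plan is to establish the three assertions in order of increasing subtlety: first the ``moreover'' statement, then infiniteness, and finally connectedness, which is where the triangulation hypothesis is essential. The ``moreover'' assertion is a direct consequence of planarity: if $w\sim w'$ with $w\in U_r$ and $w'\notin B_r$, then the edge $\{w,w'\}$ has neither endpoint in $B_r$ and so is not an edge of $B_r$. Its image in $\mathcal S$ is therefore an arc disjoint from $\mathcal B_r$; being connected and meeting $\mathcal U_r$ at $w$, it lies entirely in $\mathcal U_r$, whence $w'\in U_r$.

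Infiniteness will follow from a compactness argument. Iteration of the finite-degree condition shows $B_r$ is finite, so $\mathcal B_r$ is the compact image of a finite planar subgraph; consequently $\mathcal S\setminus\mathcal B_r$ has finitely many connected components, only one of which, namely $\mathcal U_r$, is unbounded, and each bounded component is contained in a compact subset of $\mathcal S$. Local finiteness of the embedding forces every compact subset of $\mathcal S$ to contain only finitely many vertices of $G$, so the bounded components together account for only finitely many vertices of $V$. Since $V$ is infinite and $B_r$ is finite, $U_r$ must be infinite.

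For connectedness I would argue as follows. Given $v_1,v_2\in U_r$, the set $\mathcal U_r$ is open and connected in $\mathcal S\cong\R^2$, hence path-connected, so I can join $v_1$ to $v_2$ by a continuous arc $\gamma\subset\mathcal U_r$. A standard general-position perturbation, legitimate by local finiteness, arranges that $\gamma$ avoids all vertices except its endpoints and crosses edges of $G$ transversally at finitely many points. Then $\gamma$ passes through a finite sequence of faces $T_1,\dots,T_k$ with consecutive $T_i,T_{i+1}$ sharing an edge $e_i$, and every $T_i$ has its interior inside $\mathcal U_r$. The crucial topological input is that $e_i$ cannot have both endpoints in $B_r$, for otherwise $e_i\subset\mathcal B_r$ would force $\gamma$ to meet $\mathcal B_r$; thus $e_i$ has an endpoint $u_i\notin B_r$, and since $u_i\in\overline{T_i}\subset\overline{\mathcal U_r}$ with $u_i\notin\mathcal B_r$ we conclude $u_i\in U_r$. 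The triangulation hypothesis then delivers the walk: each $T_i$ is a triangle, so any two of its vertices are adjacent in $G$, and therefore $v_1,u_1,\dots,u_{k-1},v_2$ is a walk in $G_{U_r}$, since $v_1,u_1$ lie in $T_1$, each consecutive pair $u_i,u_{i+1}$ lies in $T_{i+1}$, and $u_{k-1},v_2$ lie in $T_k$ (skipping any coincidences).

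The main obstacle is this connectedness step: it forces one to combine the topology of the planar embedding with the triangulation hypothesis, the latter being precisely what converts each face-transition across an edge into a genuine edge of $G_{U_r}$ between vertices guaranteed to lie in $U_r$.
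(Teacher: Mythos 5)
Your proposal is correct and takes essentially the same approach as the paper: the ``moreover'' part via the edge giving an arc in $\mathcal S\setminus\mathcal B_r$, and connectedness by chasing the sequence of (triangular) faces crossed by a curve disjoint from $\mathcal B_r$, picking an endpoint outside $B_r$ of each crossed edge and using that triangle vertices are pairwise adjacent. The only cosmetic differences are that you join the two vertices by an arc inside $\mathcal U_r$ where the paper uses a closed curve through them enclosing $\mathcal B_r$, and that you spell out the infiniteness claim the paper dismisses as clear.
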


\begin{proof} First since $\Uc_r$ is unbounded, the fact that the subgraph induced by $G$  on $U_r$ is infinite is clear.
Let $ v,w\in U_r $. 
First, let $ \gamma  $ be a simply closed curve in $ \mathcal{S} $ such that $ v,w $ are on $ \gamma $ and $ B_{r}  $ lies in the open bounded region enclosed by $ \gamma $. Let $ f_{1},\ldots,f_{n} $ be a path of faces which $ \gamma $ 	passes through from $ v $ to $ w $, i.e., two subsequent faces intersect in exactly one edge which is crossed by $ \gamma $.  These edges have at least one vertex outside of $ B_{r} $ which we denote by $ v_{1},\ldots,v_{n} $. Two subsequent vertices $ v_{j} $ and $ v_{j+1} $ are connected by a path of boundary vertices of $ f_{j} $ which are not included in $ B_{r} $, $ j=1,\ldots,n-1 $. This induces a path in the graph between $ v $ and $ w $ in $U_r $. 
The last statement is easy: indeed if  $w \sim w'$, $w\in U_r$ and $w'\notin B_r$, the edges joining $w$ to $w'$ gives a curve in $\mathcal S \setminus \mathcal B_r$ . Thus $w$ and $w'$ are in the same connected component of  $\mathcal S \setminus \mathcal B_r$ and the statement follows.
\end{proof}

Note that since the other connected components of $\mathcal S \setminus \mathcal B_r$ {are bounded}, the other induced graphs are finite.


The following definition turns to be an  important object in our study. 
\begin{equation*}
V_{r}:=\{v\in B_{r}\mid  \mbox{there is }w\in U_{r}\mbox{ such that }v\sim w  \} . 
\end{equation*}

\begin{lemma}\label{l:V_r}Let $ G $ be planar triangulation. For every  {$ r\ge 1$}, there exists a simply closed path of vertices $ p_{r} $ such that $ V_r=V(p_{r}) $.  
Moreover, one has 
\begin{align*}
B_{r} \subseteq  B(p_r), \quad    \textrm{ and }\quad
 U_r=U(p_r)\subseteq V\setminus B_r.
\end{align*}
Furthermore, one also has
\begin{align*}
V_{r}&=\{v\in S_{r}\mid  \mbox{there is }w\in U_{r}\mbox{ such that }v\sim w  \} \\ 
&=\{v\in B (p_ r)\mid  \mbox{there is }w\in U_{r}\mbox{ such that }v\sim w  \}.
\end{align*}
\end{lemma}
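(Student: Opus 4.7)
The plan is to construct $p_{r}$ as the cyclic trace of the topological boundary of $\mathcal{U}_{r}$ in $\mathcal{S}$, and then to deduce the partition and containment properties from Jordan's curve theorem.

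First I would prove $V_{r}\subseteq S_{r}$ by the triangle inequality: if $v\in B_{r}$ has $d(o,v)\le r-1$, then every neighbor of $v$ lies in $B_{r}$, so $v$ cannot have a neighbor in $U_{r}\subseteq V\setminus B_{r}$. This immediately yields the identity $V_{r}=\{v\in S_{r}\mid \exists w\in U_{r},\ v\sim w\}$ claimed in the ``Furthermore'' part of the lemma.

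Next I construct $p_{r}$. Since $\mathcal{B}_{r}$ is a compact, connected $1$-complex in $\mathcal{S}\cong\R^{2}$ and $\mathcal{U}_{r}$ is the unbounded component of its complement, the topological boundary $\partial \mathcal{U}_{r}$ lies in $\mathcal{B}_{r}$ and meets $V$ in exactly $V_{r}$: for $v\in V_{r}$ the open edge from $v$ to any of its $U_{r}$-neighbors lies in $\mathcal{U}_{r}$ and accumulates at $v$, while conversely a vertex on $\partial\mathcal{U}_{r}$ must, since $G$ is a triangulation, be incident to a triangular face with a $U_{r}$-vertex and hence belong to $V_{r}$. To see that $\partial \mathcal{U}_{r}$ is a single simple closed curve, I would analyse the cyclic order of the neighbors around each $v\in V_{r}$: consecutive neighbors are joined by a triangular face, so the neighbors decompose into maximal cyclic arcs of $U_{r}$-vertices alternating with arcs of non-$U_{r}$-vertices, and $\partial \mathcal{U}_{r}$ contributes one pair of outer edges at $v$ per $U_{r}$-arc. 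If some $v$ carried two separate $U_{r}$-arcs, then $v$ would be a cut point of $\overline{\mathcal{U}_{r}}$; in view of the connectivity of $U_{r}$ (Lemma~\ref{l:decomposition}) and the triangulation structure this can be excluded, either by a face-by-face topological argument or by appealing to the Copy-and-Paste procedure of Section~\ref{s:CopyPaste}. Consequently each $v\in V_{r}$ carries exactly one $U_{r}$-arc, $\partial \mathcal{U}_{r}$ is a single Jordan curve meeting $V$ precisely in $V_{r}$ in a definite cyclic order, and reading the vertices off along this curve defines the simply closed path $p_{r}$ (in $G$, or if needed in a planar supergraph of $G$ obtained by adding shortcut edges along $\partial \mathcal{U}_{r}$).

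Jordan's theorem applied to $p_{r}$ gives $\mathcal{S}=\Bc(p_{r})\cup\gamma(p_{r})\cup\Uc(p_{r})$. Since $\gamma(p_{r})\subseteq \mathcal{B}_{r}$, every component of $\mathcal{S}\setminus \mathcal{B}_{r}$ sits inside a component of $\mathcal{S}\setminus\gamma(p_{r})$, and in particular $\mathcal{U}_{r}\subseteq \Uc(p_{r})$. Conversely, as $p_{r}$ is the outer boundary of $\mathcal{B}_{r}$ we have $\mathcal{B}_{r}\subseteq \overline{\Bc(p_{r})}$, so $\Uc(p_{r})$ is disjoint from $\mathcal{B}_{r}$ and hence $\Uc(p_{r})\subseteq \mathcal{U}_{r}$. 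Intersecting with $V$ yields $U_{r}=U(p_{r})\subseteq V\setminus B_{r}$ and $B_{r}\subseteq B(p_{r})$. For the last characterization of $V_{r}$, if $v\in B(p_{r})$ has a neighbor $w\in U_{r}=U(p_{r})$, the edge $\{v,w\}$ must cross $\gamma(p_{r})$; by planarity of the embedding this crossing occurs only at shared vertices, so one of $v,w$ lies in $V(p_{r})=V_{r}$, and since $w\in U_{r}$ is disjoint from $V_{r}$ we conclude $v\in V_{r}\subseteq B_{r}$.

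The main obstacle is the local uniqueness of the $U_{r}$-arc at each $v\in V_{r}$, i.e.\ ensuring that $\partial \mathcal{U}_{r}$ is a single Jordan curve rather than a bouquet of cycles glued at shared vertices. This is where the triangulation hypothesis and the global connectivity of $U_{r}$ must interact, and it requires a careful combinatorial-topological argument before Jordan's theorem can be cleanly applied.
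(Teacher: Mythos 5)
Your plan correctly identifies where the difficulty of the lemma sits, but it does not resolve it: the statement that $\partial\mathcal{U}_{r}$ is a single simple closed curve whose vertices are exactly $V_{r}$ (equivalently, that each $v\in V_{r}$ carries exactly one maximal $U_{r}$-arc in its cyclic neighborhood) is the entire content of the lemma, and your write-up leaves it open, as you admit in the last paragraph. Moreover, the two mechanisms you suggest for excluding a second $U_{r}$-arc do not work. First, a vertex $v$ with two $U_{r}$-arcs is in general \emph{not} a cut point of $\overline{\mathcal{U}_{r}}$: the set $\overline{\mathcal{U}_{r}}\setminus\{v\}$ still contains the connected set $\mathcal{U}_{r}$, and every remaining boundary point is adherent to $\mathcal{U}_{r}$, so it stays connected; hence no contradiction with Lemma~\ref{l:decomposition} can be extracted this way. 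Second, the Copy-and-Paste Lemma is not applicable: Lemma~\ref{l:CopyPaste} requires the degree hypotheses (a)--(c) on the boundary path and concludes the existence of positive curvature, whereas Lemma~\ref{l:V_r} is asserted for arbitrary planar triangulations with no degree or curvature assumption whatsoever. A further point: the lemma needs $p_{r}$ to be a simply closed path \emph{in $G$} (it is later used to produce $G$-neighbors in $V_{r}$ and to build the elementary cells), so your fallback of reading the vertices off $\partial\mathcal{U}_{r}$ and adding shortcut edges in a planar supergraph would prove a strictly weaker statement; fortunately it is also unnecessary, since $\partial\mathcal{U}_{r}$ consists of edges of $\mathcal{B}_{r}$.

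To close the gap one must let the global structure of $B_{r}$ interact with the local picture at $v$: for instance, if $a_{1},a_{2}$ lie in two different $U_{r}$-sectors at $v$, join them by a path inside $U_{r}$ (Lemma~\ref{l:decomposition}) and close it up through $v$; the two vertices of $V_{r}\subseteq S_{r}$ flanking the two sectors lie on different sides of the resulting closed curve, yet geodesics from each of them to $o$ stay in $\mathcal{B}_{r}\setminus\{v\}$ and hence avoid the curve, a contradiction. The paper sidesteps the direct topological analysis of $\partial\mathcal{U}_{r}$ altogether: it first shows that every $v\in V_{r}$ has at least two $G$-neighbors in $V_{r}$, so \emph{some} simply closed path $p_{r}$ with $V(p_{r})\subseteq V_{r}$ exists; it then proves $o\in B(p_{r})$, $U_{r}\subseteq U(p_{r})$ and $V(p_{r})\subseteq S_{r}$, and concludes that for any $v\in V_{r}$ a geodesic from $o$ to $v$ extended by a $U_{r}$-neighbor must cross $\gamma(p_{r})$ in a vertex of $S_{r}$, which can only be $v$. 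This a posteriori argument gives $V_{r}=V(p_{r})$ and simultaneously excludes pinch points and multiple boundary cycles, which is exactly the step your proposal is missing.
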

\begin{proof}
\emph{We show $\emptyset\neq V_r \subseteq S_r$:}
Since  the graph is connected, $V_r$ is not empty.   Moreover, by construction,  $U_r \subseteq V\setminus B_r$. So, for a vertex in $ V_{r} $ to be connected to $ U_{r} $ it cannot be in $ B_{r-1} $. Thus, we have  $V_r \subseteq (V\setminus B_{r-1}) \cap B_r= S_r$.\smallskip

\emph{Existence of a simply closed path $ p_{r} $ with $ V(p_{r})\subseteq V_{r} $:}
We now claim that for every vertex in $ v\in V_{r} $,	 there are (at least) two distinct adjacent vertices in $ V_{r} $. This easily follows by considering the combinatorial neighborhood of $ v  \in S_{r}$ which includes  a neighbor  $ v_{-} $ in $ S_{r-1} $ and $ v_{+ } $ in $ S_{r+1}  \cap U_r $   and using that $ G $ is a triangulation.
Since $S_r$ is finite, there necessarily exists a simply closed path $ p_{r} $ of vertices in $ V_{r} $. \smallskip

\emph{We show that the root $o $  is in  $  B(p_{r}) $:}  By construction of the simply closed path $ p_{r} $, and its image $ \gamma(p_{r}) $ which is a simply closed curve in $\mathcal{S}  $, there exist two  vertices $v_{-}\in S_{r-1}$ and $v_{+}\in S_{r+1} \cap U_r$ which do not belong to the same connected component of $\Sc  \setminus\gamma( p_r)$.   Since $V(p_r)\subseteq S_r\subseteq B_r$, one has $\mathcal U_r\subseteq \mathcal U(p_r)$. Thus, $ v_{+}\in \mathcal{U}(p_{r}) $ and $ v_{-} \in \mathcal{B}(p_{r}) $. Furthermore, all the vertices on the  geodesic between  the root $o$ and $v_{-}$ belong to the same  connected component and, therefore, belong  to $B(p_r)$.\smallskip

\emph{We show $U(p_r)\subseteq V\setminus B_r$ and $B_r \subseteq B(p_r)$:} The two  statements are equivalent by taking   the complement. Let us prove the first statement.  Let $v \in U(p_r)$ and consider a  geodesic from $o$ to $v$. By connectedness, it has to cross the simply closed curve $\gamma(p_r) $ arising from $ p_{r} $ and must contain a vertex  in $V(p_r)$, necessarily different from $v$. Since $V(p_r)\subseteq S_r$, one has  $d(o,v) \geq r+1$. Thus, $ v $ is not in $ B_{r} $.\smallskip

\emph{We show $V_r= V(p_r)$:} Since we constructed $ p_{r} $ such that $ V(p_{r})\subseteq V_{r} $, we are left to show $V_r\subseteq  V(p_r)$.  Let $v\in V_r$ and $w \in U_r $ such that $v\sim w$.
Consider a geodesic $ p $ from $ o $ to $ v $. Thus, adding $ w $ to the end of $ p $ is a geodesic between $o$ and $w$ which,  by connectedness, must cross $V(p_r)$, since $o \in B(p_r)$ and $w\in U_r \subseteq  U(p_r)$. Recalling $V(p_r) \subseteq S_r$, this implies  $v\in V(p_r)$.\smallskip

\emph{We show $ U_r=U(p_r)$:}  We already noticed that $U_r \subseteq U(p_r)$. The reverse   inclusion  follows from the following general  connectedness result: Let $A \subseteq B \subseteq E$ in a topological space $ E $ and let $O_A$ be an arc-connected component of $E\setminus A$ such that $O_A\subseteq E\setminus B$. Then, $O_A$ is also an arc-connected component of $E\setminus B$.\smallskip

Finally, we turn to the last two equalities concerning $V_r$.
The first  equality is clear, since we already know $V_r\subseteq S_r$.
We turn to the second equality.
Let $v\in  B(p_r)$ and $w \in U_r=U(p_r)$ such that $v\sim w$.
Let   $p$ be any path form $o$ to $v$.
Since $o\in  B(p_r)$, $w\in  U(p_r)$ and $ v\sim w $, there is $u\in V(p)$   such that $u\in V(p_r)$. Consider the last vertex $ u $ in the path $ p_{r} $ with this property. Then this vertex and all the following vertices including $ v $ must be in $ V(p_{r})\cup U(p_{r}) $. Since $ v \not\in U(p_{r}) $, we conclude $v\in V(p_r)=V_r.$
\end{proof}

We define inductively $\Sigma_{0}=S_{0}=\{o\}$ and
\begin{align*}
    \Sigma_{r}:={B}(p_{r})\setminus \Sigma_{r-1}\quad \mbox{and}\quad \partial \Sigma_{r}:=V(p_{r}),\qquad r\ge1.
\end{align*}
This  gives a decomposition of $ V $ into a ``new sphere structure''. This new sphere structure is also called a  ``1-dimensional decomposition'' in the literature. 
We  denote 
$$ B_r^{(\Sigma)}:= \bigcup_{k=0}^r \Sigma_{k}$$ for  $r\geq0$.
{Note that $B_r^{(\Sigma)}= B(p_r)$ for $r\geq 1$.}

\begin{lemma}\label{l:sphere-structure} Let $G$ be a planar triangulation. Let  $ r,r' \geq 0$ and $v \in  \Sigma_{r}$, $w \in  \Sigma_{r'}$ such that $v \sim w$, 
then
\[
| r'-r | \leq 1.
\]
\end{lemma}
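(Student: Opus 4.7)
\emph{The plan} is to reduce the lemma to a single monotonicity claim: \emph{if $v \in B(p_r)$ for some $r \geq 1$ and $w \sim v$, then $w \in B(p_{r+1})$.} Once this claim is established, the lemma follows directly. By symmetry of $v$ and $w$ we may assume $r' \geq r$; if $r \geq 1$, then $v \in \Sigma_r \subseteq B(p_r) = B^{(\Sigma)}_r$, so the claim gives $w \in B(p_{r+1}) = B^{(\Sigma)}_{r+1}$ and hence $r' \leq r+1$. The boundary case $r = 0$ (i.e.\ $v = o$) is immediate, since any neighbor $w$ satisfies $w \in S_1 \subseteq B_1 \subseteq B(p_1) = B^{(\Sigma)}_1$, forcing $r' \leq 1$.

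\emph{Proving the claim.} I would first record the monotonicity $B(p_r) \subseteq B(p_{r+1})$, equivalently $U_{r+1} \subseteq U_r$. The latter holds on the underlying surface $\Sc$: since $\Bc_r \subseteq \Bc_{r+1}$, any arc from a given point to infinity in $\Sc \setminus \Bc_{r+1}$ is also an arc to infinity in $\Sc \setminus \Bc_r$, so the unbounded component $\Uc_{r+1}$ is contained in $\Uc_r$. Now suppose for contradiction that $v \in B(p_r)$, $w \sim v$, but $w \notin B(p_{r+1})$. By Lemma~\ref{l:V_r} applied at level $r+1$ this means $w \in U_{r+1}$, and since $U_{r+1} \subseteq V \setminus B_{r+1}$ we get $d(o,w) \geq r+2$. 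Hence $d(o,v) \geq r+1$, so $v \notin B_r$. The monotonicity above gives $w \in U_{r+1} \subseteq U_r$, and Lemma~\ref{l:decomposition} (with $w$ and $v$ in the roles of $w$ and $w'$) yields $v \in U_r = V \setminus B(p_r)$, contradicting $v \in B(p_r)$.

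\emph{Expected obstacle.} The work is more bookkeeping than computation: one must keep three parallel structures aligned — the graph balls $B_r$, the topological enclosures $B(p_r)$, and the cells $\Sigma_r$ — and identify which inclusion is being used at each step. The only genuinely topological input is the monotonicity of the unbounded complementary components; once this is in place, Lemma~\ref{l:decomposition} furnishes the connectivity step and the claim closes in a single line.
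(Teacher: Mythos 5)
Your proof is correct, but it takes a somewhat different route than the paper's. The paper argues directly: assuming $w\in\Sigma_{r+l}$ with $l\ge 1$, it notes $w\notin B^{(\Sigma)}_{r}=B(p_r)$, hence $w\in U(p_r)=U_r$, and then invokes the last characterization in Lemma~\ref{l:V_r} (the vertices of $B(p_r)$ having a neighbour in $U_r$ are exactly $V_r$) to place $v\in V_r=\partial\Sigma_r\subseteq S_r$; consequently $w\in S_{r+1}\subseteq B_{r+1}\subseteq B(p_{r+1})=B^{(\Sigma)}_{r+1}$, forcing $l=1$. You never use that characterization of $V_r$ and do not locate $v,w$ in metric spheres at all; instead you isolate the topological monotonicity $\Uc_{r+1}\subseteq\Uc_r$ (hence $U_{r+1}\subseteq U_r$), which the paper never states explicitly although it is implicitly behind the identity $B^{(\Sigma)}_r=B(p_r)$, and you close by contradiction through the neighbour-propagation statement of Lemma~\ref{l:decomposition}. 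Both arguments rest on Lemma~\ref{l:V_r} (in particular $U_r=U(p_r)\subseteq V\setminus B_r$ and $B_{r+1}\subseteq B(p_{r+1})$) and both use, tacitly, the disjointness of the decomposition $V=\bigcup_k\Sigma_k$ in the final step from $w\in B^{(\Sigma)}_{r+1}$ to $r'\le r+1$, so you are not assuming anything the paper does not. What the paper's route buys is extra structural information along the way ($v\in\partial\Sigma_r\subseteq S_r$ and $w\in S_{r+1}$, which is essentially re-proved and refined in Lemma~\ref{l:sphere}); what yours buys is a softer argument that records the monotonicity of the unbounded components as an explicit lemma and handles the base case $r=0$ and the existence range $r\ge 1$ of the paths $p_r$ more carefully than the paper does.
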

\begin{proof}
Let  $r\geq0$, $l\geq1$ and   $v \in  \Sigma_{r}$, $w \in  \Sigma_{r+l}$ such that $v \sim w$.
Since by construction $ B(p_r) = B_r^{(\Sigma)}$, we deduce  $w\in U(p_r)=U_r$ (where $ p_{r} $ is taken from Lemma~\ref{l:V_r}).
By definition of $ V_{r} $ and $ v\sim w $, we  obtain $v\in V_r=\partial \Sigma_r \subseteq S_r$ and, therefore, $w\in S_{r+1}$. Hence, $w\in B_{r+1}^{(\Sigma)}$ as   $B_{r+1} \subseteq V \cap B(p_{r+1}) = B_{r+1}^{(\Sigma)}$.  Thus,  $w \in  \Sigma_{r+1}$ that is $l=1$ and the result follows.
\end{proof}

Next, we analyze this new sphere structure more closely. To this end, we denote by $\deg_{\pm}^{(\Sigma)}(v)$ (respectively $\deg_{0}^{(\Sigma)}(v)$) the number of neighbors of a vertex $v\in \Sigma_{r}$ in $\Sigma_{r\pm1}$ (respectively in $\Sigma_{r}$).

\begin{lemma}\label{l:sphere} In a planar triangulation we have 
 $$ \partial \Sigma_r=V_{r},$$
 for all  $ r\ge1 $. Moreover, on $  \partial \Sigma_r$,  $$\deg_{+}\ge \deg_{+}^{(\Sigma)} \geq 1,\quad \deg_-=\deg_{-}^{(\Sigma)}\ge 1 \quad \mbox{and}\quad2\leq \deg_{0}\leq \deg_{0}^{(\Sigma)}$$ 
and, on $ \Sigma_r \setminus \partial \Sigma_r$,
\[
\deg_+^{(\Sigma)} =0.
\]
\end{lemma}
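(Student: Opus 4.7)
The identity $\partial\Sigma_r=V_r$ is immediate: by construction $\partial\Sigma_r:=V(p_r)$, and Lemma~\ref{l:V_r} gives $V(p_r)=V_r$. The whole proof then rests on one geometric fact which I would establish first: the simple closed curves $\gamma(p_{r-1})$ and $\gamma(p_r)$ are disjoint, and $\overline{\mathcal B(p_{r-1})}\subseteq \mathcal B(p_r)$. Indeed, $V(p_{r-1})$ and $V(p_r)$ lie in different distance spheres, so they are disjoint, hence $\gamma(p_{r-1})$ and $\gamma(p_r)$ share no vertex and (by planarity) no crossing; since $V(p_{r-1})\subseteq S_{r-1}\subseteq B_{r-1}\subseteq B(p_r)$, the curve $\gamma(p_{r-1})$ sits inside $\mathcal B(p_r)$. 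Therefore $\Sigma_r=B(p_r)\setminus B(p_{r-1})$ corresponds to the closed annular region $\overline{\mathcal B(p_r)}\setminus \overline{\mathcal B(p_{r-1})}$, with the outer boundary $V(p_r)=\partial \Sigma_r$.

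\textbf{Forward degrees on $\partial\Sigma_r$.} For $v\in V_r$, fix a neighbor $w\in U_r$, which exists by definition of $V_r$. By Lemma~\ref{l:sphere-structure} the neighbor $w$ lies in $\Sigma_{r-1}\cup\Sigma_r\cup\Sigma_{r+1}$, and since $w\in U_r=U(p_r)$ lies outside $B(p_r)=\Sigma_0\cup\cdots\cup\Sigma_r$, it must lie in $\Sigma_{r+1}$; hence $\deg_+^{(\Sigma)}(v)\geq 1$. Conversely, any $w\in\Sigma_{r+1}$ with $w\sim v$ satisfies $w\in B(p_{r+1})\setminus B(p_r)\subseteq V\setminus B_r$, so $d(o,w)\geq r+1$; combined with $d(o,v)=r$ this forces $w\in S_{r+1}$. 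This yields $\deg_+(v)\geq \deg_+^{(\Sigma)}(v)\geq 1$.

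\textbf{Backward and same-sphere degrees on $\partial\Sigma_r$.} The lower bound $\deg_-(v)\geq 1$ is immediate from any shortest path from $o$ to $v$, and $\deg_0(v)\geq 2$ from the fact that $v$ sits on the simply closed path $V_r\subseteq S_r$. The equality $\deg_- = \deg_-^{(\Sigma)}$ and the inequality $\deg_0\leq \deg_0^{(\Sigma)}$ both reduce to the same no-crossing argument: if $u\sim v$ and $u\in \overline{\mathcal B(p_{r-1})}$, then the edge $\{u,v\}$ is a curve from $\overline{\mathcal B(p_{r-1})}$ to $\gamma(p_r)$, which is strictly outside $\overline{\mathcal B(p_{r-1})}$; since edges of the planar embedding do not cross other edges, this edge can only meet $\gamma(p_{r-1})$ at a vertex, forcing $u\in V(p_{r-1})\subseteq S_{r-1}$. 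Hence a neighbor in $B(p_{r-1})$ must be in $S_{r-1}$, which in particular rules out $u\in S_r$ or $S_{r+1}$ lying in $B(p_{r-1})$, so every $u\sim v$ in $S_{r-1}$ lies in $\Sigma_{r-1}$ (here one also uses Lemma~\ref{l:sphere-structure} to exclude $u$ in $\Sigma_r$ or $\Sigma_{r+1}$, which would contradict $d(o,u)\geq r$). The same argument gives $u\in S_r$, $u\sim v$ $\Rightarrow$ $u\in \Sigma_r$, proving $\deg_0\leq\deg_0^{(\Sigma)}$.

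\textbf{Interior vertices of $\Sigma_r$.} For $v\in \Sigma_r\setminus \partial\Sigma_r$, $v$ is not on $\gamma(p_r)$ but lies in the closed annular region; a potential $\Sigma_{r+1}$-neighbor $w$ lies in $\mathcal U(p_r)$, so the edge $\{v,w\}$ would cross $\gamma(p_r)$, forcing $v$ or $w$ to be on $\gamma(p_r)$. Neither is, so $\deg_+^{(\Sigma)}(v)=0$.

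\textbf{Main obstacle.} The delicate point is the no-crossing argument: I must carefully invoke planarity to conclude that an edge linking a point inside $\overline{\mathcal B(p_{r-1})}$ to a point of $\gamma(p_r)$ can only exist when one of the endpoints lies in $V(p_{r-1})$. This in turn depends on the nesting $\overline{\mathcal B(p_{r-1})}\subseteq \mathcal B(p_r)$, which is the geometric heart of the proof and must be set up with Jordan's theorem before the degree bookkeeping begins.
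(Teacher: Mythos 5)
Your proof is correct, and its skeleton matches the paper's in several places: the forward bound via Lemma~\ref{l:sphere-structure} together with $U_r=U(p_r)=V\setminus B(p_r)$ and the distance argument placing $\Sigma_{r+1}$-neighbours in $S_{r+1}$, and the lower bounds $\deg_-\ge1$, $\deg_0\ge2$ from a geodesic and the two path-neighbours on $p_r$. Where you genuinely differ is the mechanism for $\deg_-=\deg_-^{(\Sigma)}$, $\deg_0\le\deg_0^{(\Sigma)}$ and $\deg_+^{(\Sigma)}=0$ on $\Sigma_r\setminus\partial\Sigma_r$: you first establish the nesting $\overline{\mathcal B(p_{r-1})}\subseteq\mathcal B(p_r)$ and then run edge-by-edge Jordan/no-crossing arguments, whereas the paper simply quotes the characterizations already proved in Lemma~\ref{l:V_r}, namely $U_r=U(p_r)$ and $V_r=\{v\in B(p_r)\mid \exists\, w\in U_r,\ v\sim w\}$, together with the inclusion $\partial\Sigma_r\subseteq U_{r-1}$; it also obtains $\deg_0\le\deg_0^{(\Sigma)}$ by degree counting from the other two relations rather than by a direct inclusion of neighbour sets, as you do. Your route is more hands-on and topologically self-contained; the paper's is shorter because the topological work was front-loaded into Lemma~\ref{l:V_r}. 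Two small remarks: the nesting statement you single out as the ``geometric heart'' is in fact dispensable -- for $r\ge2$ a vertex $v\in\Sigma_r$ lies in $V\setminus B(p_{r-1})=U(p_{r-1})$, hence is already strictly outside $\overline{\mathcal B(p_{r-1})}$, which is all your crossing argument needs; and since $p_0$ does not exist, the case $r=1$ (where $\Sigma_0=S_0=\{o\}$, making the backward and horizontal claims trivial) should be noted separately rather than run through the annulus picture.
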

\begin{proof}The first statement $ \partial \Sigma_{r}=V_{r}  $  follows directly from Lemma \ref{l:V_r} and the definition of $\partial \Sigma_{r}$. We first consider the case of $ v\in \partial \Sigma_{r}=V_{r} $.

\emph{ We  show $ \deg_{+}(v)\ge \deg_{+}^{(\Sigma)}(v) \geq 1 $ for  $ v\in \partial \Sigma_{r} $:} We first claim
	\[
\{ w \in S_{r+1}\mid w \sim v \}  \supseteq \{ w \in  \Sigma_{r+1}\mid w \sim v \}\neq \emptyset. 
\]
Vertices in $ V_{r } $ have (at least) one neighbor in  $U_{r}=U(p_{r})=V\setminus {B}(p_{r})$.  By definition of $ \Sigma_{r+1} $  these neighbors are exactly the neighbors of $v$ in $ \Sigma_{r+1}$ and we have already seen in Lemma~\ref{l:V_r} that  they must belong to  $ S_{r+1} $.  
Thus,	we   obtain 	 $  \deg_{+}(v)  \geq\deg_{+}^{(\Sigma)}(v) \ge1 $.

\emph{ We  show $\deg_-(v)=\deg_{-}^{(\Sigma)}(v)\ge 1$ for $ v\in \partial \Sigma_{r} $:}	To this end, we claim 
\[
\{ w \in \Sigma_{r-1}\mid w \sim v \} =\{ w \in \partial \Sigma_{r-1}\mid w \sim v \}=\{ w \in S_{r-1}\mid w \sim v \} \neq \emptyset. 
\]
To see this, we first note that by connectedness $\partial \Sigma_{r}  \subseteq U_{r-1}$.  Thus,  if $ w\in S_{r-1} $ is a neighbor of $ v\in \partial \Sigma_{r}$, then $ w\in V_{r-1} = \partial \Sigma_{r-1}$ by definition and, therefore, $ w\in  \Sigma_{r-1}$. On the other hand,  if $ w\in \Sigma_{r-1} $ is a neighbor of $ v\in \partial \Sigma_{r} $, then it must belong to $\partial \Sigma_{r-1}$ and as noticed before to $S_{r-1}$.

	
\emph{We show $ 2\leq \deg_{0}\leq \deg_{-}^{(\Sigma)} $ $ v\in \partial \Sigma_{r} $:}	The second inequality  follows from the first two inequalities. Furthermore, $ v\in \partial\Sigma_{r}=V_{r} $ has two neighbors in $V_{r}= V(p_{r})\subseteq S_{r}$.  
	
\emph{We show $ \deg_{+}^{(\Sigma)} (v)=0 $ for $ v\in  \Sigma_r \setminus \partial \Sigma_{r}$:} If $ v\in \Sigma_r \setminus \partial \Sigma_{r}$,   then $v$ does not admit any neighbor in $U_r$. Thus, $ v $ has no neighbors  in $\Sigma_{r+1}$  since $\Sigma_{r+1}\subseteq U_r$ from which $ \deg_{+}^{(\Sigma)} (v)=0$ follows.  

This finishes the proof.
\end{proof}

\subsection{Elementary cells} \label{s:elementary-cells}

In this section we define  elementary cells associated to the  new sphere structure introduced above. More precisely, we define the elementary cells $C_{v,w}$ and $C_{v}$ associated to vertices $v,w\in \partial \Sigma_{r}$ with $v\sim {w}$.

Let $E_{r}$ be the {set of} edges $e_{1},\ldots, e_{N}$ connecting vertices in $\partial \Sigma_{r}$ to vertices $\partial \Sigma_{r-1}$, where the enumeration is in cyclic order with respect to the boundary path $ p_{r} $ from Lemma~\ref{l:V_r}. In particular, each vertex in $\partial \Sigma_{r}$ is contained in at least one of these edges by the definition of $ V_{r}$  which equals $V(p_{r}) =\partial \Sigma_{r}$ by the lemmas above. Now, the subgraph ${(}\Sigma_{r}\setminus \Sigma_{r-1}{)}\cup \partial \Sigma_{r-1}$ can be decomposed into $N$ subgraphs $W_{1},\ldots, W_{N}$ that  have a closed boundary path with edges of $\partial \Sigma_{r}$,  $\partial \Sigma_{r-1}$ and $E_{r}$ and such that $W_{j}$ and $W_{j+1}$ intersect precisely in $e_{j}$ for $j=1,\ldots, N$ (modulo $N$). Note that each $W_{j}$ contains exactly one or two vertices of $\partial \Sigma_{r}$.

If there are two vertices $v,w\in\partial \Sigma_{r}$ contained in $W_{j}$ we denote $C_{v,w}:=W_{j}$ and call $C_{v,w}$ an \emph{elementary cell} of type (EC1). We denote the neighbors of $v$ and $w$ in $\partial \Sigma_{r-1}$ by the edges $e_{j}$ and $e_{j+1}$ by $v'$ and $w'$ (while it might very well happen that $v'=w'$).

On the other hand, if there is only one vertex  $v\in \partial \Sigma_{r}$ contained in $ W_{j} $, then $ v $ is contained in more than one edge of $E_{r}$, say $e_{i},\ldots,e_{i+n}$. We denote the union of $W_{i+1},\ldots,W_{i+n}$ by $C_{v}$ and call $C_{v}$ an \emph{elementary cell} type (EC2).

See Figure~\ref{f:Elementary_Cell} for an illustration of the definition.


\begin{figure}[!h]
\begin{overpic}
	[width=1\textwidth]{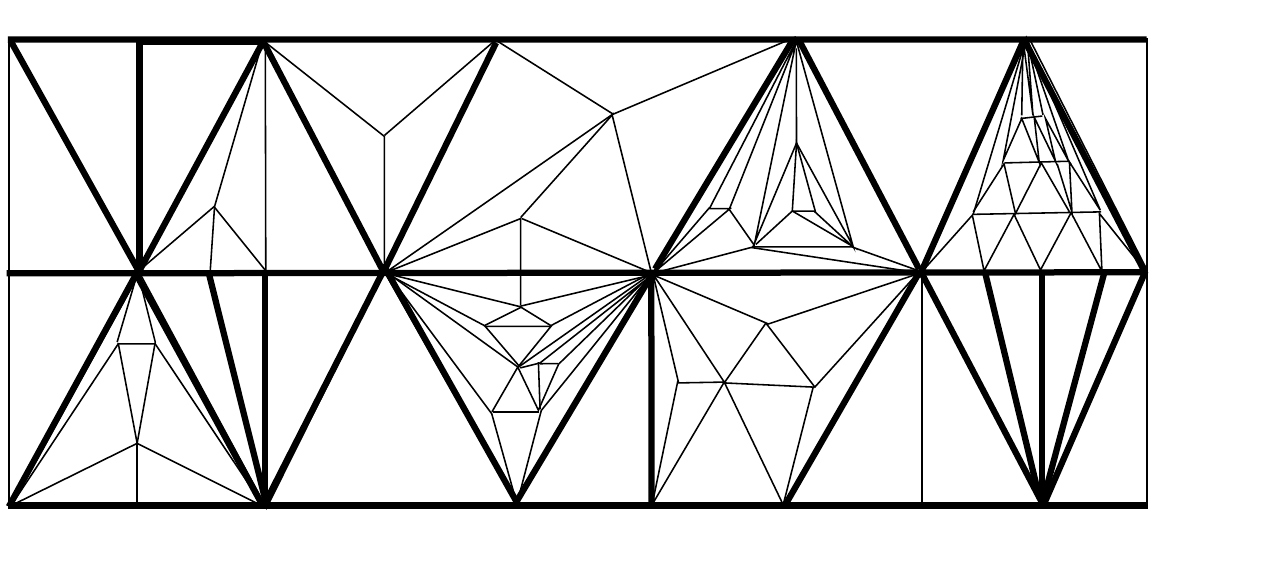}
	\put(330,145){$\partial \Sigma_{r+1}$} 	\put(334,112){\large$\Sigma_{r+1}$}
	\put(330,80){\large$\partial\Sigma_{r}$}
	\put(334,45){\large$\Sigma_{r}$}
	\put(330,15){\large$\partial\Sigma_{r-1}$}
\end{overpic}	
\caption{\label{f:Elementary_Cell} An example of a part of $\Sigma_{r}$ and $\Sigma_{r+1}$, where the horizontal lines indicate the boundary edges connecting $\partial \Sigma_{r-1}$, $\partial \Sigma_{r}$ and $\partial \Sigma_{r+1}$. The thick lines enclose the elementary cells.}
\end{figure}



\subsection{The induction and  the proof  for triangulations}

In this section, we give the proof of Theorem \ref{t:triangulation} which deals with the case of triangulations.  
Our strategy is to show that if a triangulation has large vertex degree (outside a finite set)   then all the elementary cells (at least outside some other finite set) must have empty interior.

This will yield the results on the degree and  the Cartan-Hadamard type result. Moreover, this will also give that the graph has a  nice structure since (at least outside some  finite set) the sphere are composed exactly by a  cyclic closed path.

We introduce the set $Z_{R}\subseteq  \partial \Sigma_{R}\subseteq  S_{R}$ by
\begin{equation*}
    Z_{R}:=\{v\in \partial \Sigma_{R}\mid \deg_{+}^{{(\Sigma)}} (v)=1\},\quad R\ge0.
\end{equation*}
Our strategy is to show that if $Z_R$ is non-empty for some $ R $ then $ Z_{R} $ grows as $R$ decays. The underlying idea is that vertices in $ Z_{R} $ have a lot of ``backward'' neighbors due to the large vertex degree which then inductively yields vertices in $ Z_{R-1} $.

 We first   consider  the case $ \deg\ge 6 $ outside  of a finite set.   The case $ \deg \ge 7 $ outside of a finite set will need somewhat more effort.
\subsubsection{The case $\deg \geq 6$.}

\begin{lemma}\label{l:emptyinterior} Let $G=(V,E)$ be a triangulation such $\deg\ge 6$ outside of $\Sigma_{r}$ for some $r\ge0$. If, there are $v,w\in \partial \Sigma_{R}$, $v\sim w$, $R>r$, such that one of   the induced elementary cells $C_{v}$ and $C_{v,w}$  does not have an  empty interior, then  this elementary cell contains   points in $ Z_{R-1}$ and
\begin{align*}
    Z_{R-1}\neq \emptyset.
\end{align*}
In particular, if there is  $R>r$ such that $\Sigma_R \setminus \partial    \Sigma_R \neq \emptyset$, then $ Z_{R-1}\neq \emptyset$.
\end{lemma}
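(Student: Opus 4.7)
The plan is to argue by contradiction using the Copy-and-Paste Lemma. View the elementary cell $C$ (either $C_{v,w}$ or $C_v$) as a subgraph $G'$ of $G$ with simply closed boundary path $p$. By construction the interior vertices of $G'$ lie in $\Sigma_R\setminus\partial\Sigma_R\subseteq V\setminus\Sigma_r$, and since $R>r$ they satisfy $\deg_G\geq 6$. Since they are interior, none of their neighbors leave $G'$, so their $G'$-degree equals their $G$-degree. Because $G$ is a triangulation, every face has degree $3$, whence
\begin{align*}
\kappa(x)=1-\tfrac{\deg(x)}{2}+\sum_{f\ni x}\tfrac{|(x,f)|}{\deg(f)}\leq 1-\tfrac{\deg(x)}{2}+\tfrac{\deg(x)}{3}=\tfrac{\deg(x)}{3}-2\leq 0
\end{align*}
for every interior vertex $x$.

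Now suppose toward contradiction that no $u\in\partial\Sigma_{R-1}\cap V(p)$ belongs to $Z_{R-1}$, i.e.\ every such $u$ satisfies $\deg_+^{(\Sigma)}(u)\geq 2$. I want to convert this into a $G'$-degree bound. For an intermediate $u\in\partial\Sigma_{R-1}\cap V(p)$ (one not adjacent to any vertex of $\partial\Sigma_R$, hence lying on no edge of $E_R$), the $G'$-neighbors split into exactly two neighbors in the cyclic path $p_{R-1}$ (both on the $\partial\Sigma_{R-1}$-portion of $p$) and the interior-vertex neighbors of $u$ inside $C$; moreover, since $u$ has no $\partial\Sigma_R$-neighbors at all, these interior neighbors account for all of $\deg_+^{(\Sigma)}(u)$. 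Hence $\deg_{G'}(u)=2+\deg_+^{(\Sigma)}(u)\geq 4$. For the corner vertices $v',w'\in \partial\Sigma_{R-1}$ (or their analogs at the two endpoints of the $\partial\Sigma_{R-1}$-portion of $p$ in the EC2 case), the same kind of accounting gives $\deg_{G'}\geq 3$, and for the top corner(s) $v,w\in\partial\Sigma_R$ (or just $v$ in the EC2 case) the boundary structure already yields $\deg_{G'}\geq 2$.

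These bounds set up the Copy-and-Paste Lemma (Lemma~\ref{l:CopyPaste}) on $G'$: take the three special vertices $v_0,v_1,v_2$ to be the top corner(s) together with, if only one top corner is present (EC2), one bottom endpoint. Conditions (b) and (c) of the Lemma hold by the corner bounds, while (a) holds at every remaining boundary vertex by the $\deg_{G'}\geq 4$ estimate on intermediates. The Lemma then produces an interior vertex $x\in V'\setminus V(p)$ with $\kappa_G(x)=\kappa_{G'}(x)>0$, contradicting the bound $\kappa\leq 0$ established above. Thus some $u\in\partial\Sigma_{R-1}\cap V(p)$ must in fact lie in $Z_{R-1}$, proving the first assertion. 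The ``in particular'' part is immediate: if $\Sigma_R\setminus\partial\Sigma_R\neq\emptyset$, any such vertex lies in some elementary cell at level $R$, which therefore has non-empty interior, and we apply the first part.

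The main obstacle is the careful bookkeeping at the corner vertices. A corner like $v'$ has neighbors outside $G'$ (in the adjacent elementary cell) that do not contribute to $\deg_{G'}$, and $v'$ may or may not be adjacent to multiple $\partial\Sigma_R$-vertices depending on whether one is in EC1 with $v'=w'$, EC1 with $v'\neq w'$, or EC2; the case $v'=w'$ in particular makes $C_{v,w}$ a triangle boundary with just three boundary vertices, so one must verify that at least one of them ends up below the thresholds required by the Lemma. Keeping the relations $\deg_{G'}(u)=2+\deg_+^{(\Sigma)}(u)$ and the identity $\kappa_{G'}(x)=\kappa_G(x)$ for interior $x$ straight across all sub-cases is where the argument requires real care.
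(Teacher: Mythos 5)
Your overall strategy (Gau\ss--Bonnet via the Copy-and-Paste Lemma against the bound $\ka\le 1-\deg/6\le 0$ at interior vertices) is the right one, but the concrete application of Lemma~\ref{l:CopyPaste} has a genuine gap at exactly the point you flag as ``the main obstacle'' and then leave unresolved. In the EC1 case you take the special vertices $v_0,v_1,v_2$ to be only the top corners $v,w$, so hypothesis (a) must hold at \emph{all} remaining boundary vertices, including the bottom corners $v',w'$ --- and these are not ``intermediates'': even under your contradiction hypothesis $\deg_+^{(\Sigma)}(v')\ge 2$, the additional forward neighbors of $v'$ may lie in the adjacent cell $W_{j-1}$, so they do not contribute to $\deg_{C_{v,w}}(v')$, which can be exactly $3$ (the neighbors $v$, the next bottom-arc vertex, and one interior vertex forced by the triangulation). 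Thus when all four corners $v,w,v',w'$ have cell-degree exactly $3$ there are four boundary vertices below the threshold $4$, one more than Lemma~\ref{l:CopyPaste} tolerates, and no choice of three exceptional vertices makes the hypotheses true; your argument simply cannot be run in this case. This is precisely why the paper's proof splits into two cases: if one of the four corners has cell-degree $\ge 4$, it applies Corollary~\ref{c:CopyPaste} (the contrapositive packaging of the Copy-and-Paste Lemma) to produce at least four boundary vertices of degree $\le 3$, hence one besides the corners, which lies on the inner bottom arc and therefore is in $Z_{R-1}$; if all four corners have degree exactly $3$, the triangulation forces them to share a common neighbor $u$ with $\deg\ge 4$ in the reduced graph, and the Lemma is applied not to the cell but to $C_{v,w}\setminus\{v,w\}$ with boundary $(w',u,v')+q$, where only $u,v',w'$ are exceptional. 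Without this case distinction (or an equivalent excision argument) your proof does not go through; note also that your EC2 bookkeeping silently uses that the bottom vertices adjacent to $v$ (which are not ``intermediates'' in your sense) still have all their forward neighbors inside $C_v$, which is true but needs the observation that every $E_R$-edge is a cell boundary, never a chord.

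Two smaller points. First, the curvature computation contains an arithmetic slip: $1-\tfrac{\deg(x)}{2}+\tfrac{\deg(x)}{3}=1-\tfrac{\deg(x)}{6}$, not $\tfrac{\deg(x)}{3}-2$; as written your final inequality would require $\deg\le 6$ rather than $\deg\ge 6$, although the intended conclusion $\ka(x)\le 0$ for $\deg(x)\ge 6$ is of course correct. Second, your claim $\deg_{G'}(v'),\deg_{G'}(w')\ge 3$ (and likewise $\deg_{G'}(v),\deg_{G'}(w)\ge 3$, which the paper uses) is not a general fact about triangulated discs with boundary vertices; it holds here because a cell-degree of $2$ at a corner would force an edge of $E_R$ to be a chord of the cell, contradicting that the decomposition cuts along every edge of $E_R$ --- this justification (or the paper's appeal to non-emptiness of the interior) should be made explicit if you rewrite the argument.
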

\begin{proof} Let  $v,w\in \partial \Sigma_{R}$, $v\sim w$, and  consider the elementary cell $C_{v,w}$. Assume the interior of the elementary cell $C_{v,w}$  is non-empty.
Since we are in a triangulation, non-emptiness of $C_{v,w}$ implies that each of the vertices $v,w,w',v'$ has vertex degree at least $3$ in $C_{v,w}$. Moreover, by Lemma~\ref{l:sphere} any other boundary vertex $ u $ of $ C_{v,w} $ (in $ \partial \Sigma_{R-1} $) has $ \deg_{+}^{(\Sigma)}(u)\ge1$. So, every vertex in the boundary path of $C_{v,w}$ has degree at least $3$ in $C_{v,w}$. We distinguish two cases:\smallskip

\emph{Case 1: One of the vertices $v,w,w',v'$ has degree at least $4$  in $C_{v,w}$.}
Since every vertex in the boundary of $C_{v,w}$ has degree at most $3$ and every interior vertex of $C_{v,w}$ has degree at least $6$ by assumption, we can apply Corollary~\ref{c:CopyPaste}. This yields that there are at least four vertices in the boundary with degree at most $ 3 $. Since at least of the vertices $v,w,w',v'$ has degree $ 4 $   there is another boundary vertex with degree at most $3$. Since the only vertices in $\partial \Sigma_{R}$ are $v,w$ this vertex is in $\Sigma_{R-1}$ and, therefore,  this vertex is in $Z_{R-1}$.\smallskip

\emph{Case 2: The vertices $v,w,w',v'$ have all degree $3$  in $C_{v,w}$.} Since we are in a triangulation, the vertices share a unique common neighbor which we denote by $u$.
The subgraph $G'$ induced by $C_{v,w}\setminus \{v,w\}$ has the boundary $p'=(w',u,v')+ q$, where $q$
is the subpath of the boundary path $p$ of $C_{v,w}$ such that $p=(w',w,v,v')+ q$. Note that by the assumption $\deg\ge6$, we infer that $u$ has degree at least $4$ in $G'$. If every inner vertex in the subpath $q$  has vertex degree at least $4$ in $C_{v,w}$, then $G'$ satisfies the assumption of the Copy-and-Paste Lemma, Lemma~\ref{l:CopyPaste}.
Hence, there is an interior vertex with positive curvature in $G'$. This is, however, impossible by the assumption $\deg\ge6$. Thus, there is at least one inner vertex in the boundary path $q$ that has vertex degree strictly less than $4$ in $C_{v,w}$. Since all vertices in $q$ are in $\partial \Sigma_{R-1}$, they have vertex degree exactly 3 and, thus, this vertex is in $Z_{R-1}$.\smallskip

Consider now the elementary cell $C_{v}$. If the interior is not empty, then $v$ has vertex degree at least $3$ with in $C_{v}$. By the  Copy-and-Paste Lemma, Lemma~\ref{l:CopyPaste} and a similar argument as above this implies that there is a vertex in $Z_{R-1}$.\smallskip

The ``in particular'' is clear since if $v\in \Sigma_R \setminus \partial    \Sigma_R \neq \emptyset$, then $ v $ must belong to the interior of some elementary cell.
\end{proof}


\begin{lemma}[The base case]\label{l:base_case} Let $G=(V,E)$ be a triangulation with $ \deg\ge6 $ outside of $ S_{r} $ for $ r\ge 0 $. If there is a vertex $v\in 
\partial\Sigma_{R}$, $R>r$, such that
\begin{align*}
    \deg_{-}^{(\Sigma)}(v)+\deg_{0}^{(\Sigma)}(v)\geq 5 ,
\end{align*}
then
$$Z_{R-1}\neq \emptyset.$$
In particular, if $ Z_{R}\neq\emptyset $, then $ Z_{R-1}\neq \emptyset $.
\end{lemma}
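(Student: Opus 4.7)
The plan is to dispatch the ``in particular'' clause first and then split the main assertion into two cases, invoking Lemma~\ref{l:emptyinterior} in one and the Copy-and-Paste Lemma~\ref{l:CopyPaste} together with the standing assumption $\deg\ge 6$ in the other. The ``in particular'' part is immediate: if $v\in Z_R$ then $\deg_+^{(\Sigma)}(v)=1$; by Lemma~\ref{l:sphere-structure} every neighbor of $v$ lies in $\Sigma_{R-1}\cup\Sigma_R\cup\Sigma_{R+1}$, and since $R>r$ we have $\deg(v)\ge 6$, so $\deg_-^{(\Sigma)}(v)+\deg_0^{(\Sigma)}(v)=\deg(v)-1\ge 5$, reducing to the main assertion.

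For the main assertion, I would first recall the standard planar-triangulation description of the link of $v$: the neighbors lying on the inside of the Jordan curve $\gamma(p_R)$ form a path $w_1=x_0,x_1,\ldots,x_m,x_{m+1}=w_2$ in the link (with $w_1,w_2$ the two $p_R$-neighbors of $v$), and the hypothesis gives $m+2=\deg_-^{(\Sigma)}(v)+\deg_0^{(\Sigma)}(v)\ge 5$. In Case~1, some inner $x_i$ lies in $\Sigma_R\setminus\partial\Sigma_R$; then $x_i$ is an interior vertex of the elementary cell containing the angular sector at $v$ in which it sits (either $C_{v,w_1}$ or $C_{v,w_2}$ of type (EC1), or $C_v$ of type (EC2)), so that cell has non-empty interior and Lemma~\ref{l:emptyinterior} yields $Z_{R-1}\ne\emptyset$.

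In Case~2, every $x_i$ lies in $\partial\Sigma_R\cup\partial\Sigma_{R-1}$ and the inside fan of $v$ is flat and triangulated. Among $x_1,\ldots,x_m$ there is at least one vertex in $\partial\Sigma_{R-1}$ (forced by $\deg_-^{(\Sigma)}(v)\ge 1$ from Lemma~\ref{l:sphere}). If some such $x_i$ satisfies $\deg_+^{(\Sigma)}(x_i)=1$, then $x_i\in Z_{R-1}$ and we are done. Otherwise, each $\partial\Sigma_{R-1}$-neighbor of $v$ has an extra forward $\Sigma$-neighbor $v_i\in\partial\Sigma_R\setminus\{v\}$, and I would extend $v$'s flat fan by the triangles sitting on the ``other side'' of the $\partial\Sigma_{R-1}$-arc in $v$'s link to obtain a subgraph $G'$ whose interior vertices include these $\partial\Sigma_{R-1}$-neighbors. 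Applying Lemma~\ref{l:CopyPaste} to $G'$ would force an interior vertex of positive curvature; but since $R>r$ places all interior vertices of $G'$ outside $S_r$, each has degree at least $6$ and hence only non-positive corner curvatures, a contradiction.

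The main obstacle is Case~2: constructing $G'$ so that its boundary is a simply closed path and verifying that at most three boundary vertices fail to have degree at least $4$ — the latitude permitted by hypotheses (a)--(c) of Lemma~\ref{l:CopyPaste}. This forces a careful case analysis according to planar coincidences among the extra forward neighbors $v_i$ (they may coincide as $v_i=v_{i+1}$ or be mutually adjacent, according to the triangulation structure), and according to whether $v$ has chord $\partial\Sigma_R$-neighbors beyond $w_1,w_2$ (i.e.\ $\deg_0^{(\Sigma)}(v)>2$), which must be incorporated into $G'$ accordingly. Once these configurations are handled, each one yields the forbidden positive-curvature interior vertex under Lemma~\ref{l:CopyPaste}, and the contradiction forces some $\partial\Sigma_{R-1}$-neighbor of $v$ into $Z_{R-1}$.
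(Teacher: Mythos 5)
Your ``in particular'' argument and your Case~1 are fine and match the paper (Case~1 is exactly the first case of the paper's proof, via Lemma~\ref{l:emptyinterior}). The problem is Case~2, which you yourself flag as ``the main obstacle'': the construction of the subgraph $G'$ and the verification of hypotheses (a)--(c) of Lemma~\ref{l:CopyPaste} are never carried out, and this is precisely where the content of the lemma lies. As sketched, the plan is doubtful on two concrete points. First, if you only attach the triangles on the far side of the $\partial\Sigma_{R-1}$-arc, the new boundary vertices (the third vertices of those triangles) typically have degree $2$ in $G'$, and there can be many of them; Lemma~\ref{l:CopyPaste} tolerates at most two such vertices, so the hypotheses fail without a substantially larger (and unconstructed) region. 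Second, your contradiction rests on the claim that ``$R>r$ places all interior vertices of $G'$ outside $S_r$, so each has degree at least $6$''; but the interior vertices of your $G'$ are the backward neighbors of $v$, which lie in $S_{R-1}$, and the lemma only assumes $R>r$, so $R-1=r$ is allowed and then these vertices sit exactly on $S_r$, where no degree bound (hence no sign on the curvature) is available. Thus the Copy-and-Paste contradiction is unavailable in a borderline case the statement explicitly permits. (A minor further gap: the ``extra forward neighbor'' $v_i$ of a backward neighbor need not lie in $\partial\Sigma_R$; if it lies in $\Sigma_R\setminus\partial\Sigma_R$ you should instead conclude directly via Lemma~\ref{l:emptyinterior}.)

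For comparison, the paper's treatment of this case needs neither Copy-and-Paste nor any degree assumption on $S_{R-1}$: from $\deg_-^{(\Sigma)}(v)+\deg_0^{(\Sigma)}(v)\ge5$ one has $\deg_0^{(\Sigma)}(v)\ge3$ or $\deg_-^{(\Sigma)}(v)\ge3$. In the first case a neighbor of $v$ lies in $\Sigma_R\setminus\partial\Sigma_R$ and Lemma~\ref{l:emptyinterior} applies; in the second case one looks at the elementary cell $C_v$: if its interior is non-empty, again Lemma~\ref{l:emptyinterior} applies, and if it is empty, planarity traps the forward neighbors of the middle backward neighbor of $v$ inside the fan at $v$, so that middle vertex has $v$ as its only forward neighbor and hence lies in $Z_{R-1}$. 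This purely combinatorial/planarity observation is what makes the case $R=r+1$ harmless, and it is the idea missing from your Case~2.
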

\begin{proof}
 The assumption on $ v $ implies  $ \deg_{-}^{(\Sigma)} (v)\ge 3$ or $ \deg_{0}^{(\Sigma)} (v)\ge 3$. 

First assume $ \deg_{0}^{(\Sigma)}(v)\ge 3$.  As $\partial \Sigma_{R}=V(p_{R})  $ by definition and $ p_{R} $ is a simply closed path by Lemma~\ref{l:V_r}, the vertex $ v $ has at most two neighbors in $ \partial\Sigma_{R} $. Thus, there is another neighbor of $ v $ in $ \Sigma_{R} \setminus \partial \Sigma_{R}$. By Lemma~\ref{l:emptyinterior} we infer  $Z_{R-1}\neq \emptyset$.

Now, if $ \deg_{-}^{(\Sigma)} (v)\ge 3$, then the elementary cell $ C_{v} $ which has three vertices in $ \partial \Sigma_{R-1} $ in its boundary and at least one of them (i.e., the ones in the middle) belong to $ Z_{R-1} $.

The ``in particular'' is clear since for $ v\in Z_{R} $  we have by definition $ \deg_{+}^{(\Sigma)}(v)=1 $ and, therefore,   $ \deg_{-}^{(\Sigma)}(v)+\deg_{0}^{(\Sigma)}(v)\geq 5  $. Thus, $ Z_{R-1}\neq \emptyset $.
\end{proof}

The proof of Theorem~\ref{t:triangulation} now follows directly from the following lemma in which we show that emptiness of $ Z_R $ implies the statement of  Theorem~\ref{t:triangulation}.

\begin{lemma}\label{l:Z_Rempty}Let  $G=(V,E)$ be a triangulation such $\deg\ge 6$ outside of $B_{r}$ and $ Z_{r}=\emptyset $ for some $r\ge0$. 
	Then, for all $ R>r $
	\begin{align*}
		\Sigma_R= \partial \Sigma_R=S_{R}\cap U_{r}.
	\end{align*}
	Furthermore, the following statements hold:
	\begin{itemize}
		\item [(a)] For all $ v\in U_{r} $
		\begin{align*}
			\deg_{0}(v)=2  \quad\mbox{and}\quad  1\leq    \deg_{-}(v)\leq 2.
		\end{align*}
		\item [(b)]  $S_R\cap U_{r}$  is given by a  cyclic path  for each $R> r$.
	\end{itemize}	
\end{lemma}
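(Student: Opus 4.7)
The plan is to proceed by a forward induction on $R\geq r$, propagating $Z_R=\emptyset$ and then drawing successively stronger structural conclusions. The base $Z_r=\emptyset$ is the hypothesis. For the step, assume $Z_{R-1}=\emptyset$ with $R>r$: the contrapositive of Lemma~\ref{l:base_case} gives $\deg_-^{(\Sigma)}(v)+\deg_0^{(\Sigma)}(v)\leq 4$ for every $v\in\partial\Sigma_R$, and if some such $v$ lay in $Z_R$ then $\deg_+^{(\Sigma)}(v)=1$ would yield $\deg(v)\leq 5$, contradicting $\deg\geq 6$ outside $B_r$ since $\partial\Sigma_R\subseteq S_R\subseteq V\setminus B_r$. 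Hence $Z_R=\emptyset$ for all $R\geq r$, and applying the contrapositive of the ``in particular'' part of Lemma~\ref{l:emptyinterior} yields $\Sigma_R=\partial\Sigma_R$ for all $R>r$.

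Next I would identify $\partial\Sigma_R=S_R\cap U_r$ for $R>r$. The inclusion $\partial\Sigma_R\subseteq S_R$ is part of Lemma~\ref{l:V_r}; for $\partial\Sigma_R\subseteq U_r$, any $v\in V_R=\partial\Sigma_R$ has a neighbor in $U_R\subseteq U_r$ and satisfies $v\notin B_r$, so Lemma~\ref{l:decomposition} gives $v\in U_r$. Conversely, any $v\in S_R\cap U_r$ lies in a unique $\Sigma_k$, and $U_r=V\setminus B(p_r)=V\setminus\bigcup_{j\leq r}\Sigma_j$ forces $k>r$; then by the previous step $\Sigma_k=\partial\Sigma_k\subseteq S_k$, so $k=R$. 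Together with the previous stage this gives $\Sigma_R=\partial\Sigma_R=S_R\cap U_r$, and the cyclic-path claim is immediate from $\partial\Sigma_R=V(p_R)$.

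For the degree conclusions at $v\in\partial\Sigma_R$, Lemma~\ref{l:sphere} gives $\deg_-(v)=\deg_-^{(\Sigma)}(v)\geq 1$ and $\deg_0(v)\geq 2$, while $\deg_-^{(\Sigma)}+\deg_0^{(\Sigma)}\leq 4$ combined with $\deg_0^{(\Sigma)}\geq 2$ forces $\deg_-^{(\Sigma)}\leq 2$, hence $\deg_-(v)\in\{1,2\}$. To prove $\deg_0(v)=2$ exactly I would rule out chords of the cycle $p_R$. Such a chord $vu$ cannot cross any other cycle $p_k$, so in the planar embedding it lies in the annulus between $p_{R-1}$ and $p_R$ or between $p_R$ and $p_{R+1}$; by $\Sigma_R=\partial\Sigma_R$ (and the analogous statement at level $R\pm 1$) this annulus contains no vertex in its interior, so triangulating the pocket bounded by the chord and the short arc of $p_R$ forces, by minimality of the chord arc-length $n$, the case $n=2$. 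In this case the unique intermediate vertex $w$ together with $v,u$ bounds a triangular face $vwu$, which makes $v$ and $u$ consecutive in the combinatorial link of $w$; this empties one of the two arcs of the link separating the two $p_R$-path-neighbors of $w$, contradicting either $\deg_-^{(\Sigma)}(w)\geq 1$ or $\deg_+^{(\Sigma)}(w)\geq 1$ from Lemma~\ref{l:sphere}.

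The main obstacle is the chord elimination: the linear bounds from Lemma~\ref{l:base_case} and Lemma~\ref{l:sphere} leave $\deg_0(v)\in\{2,3\}$, and forcing equality requires combining the empty-annulus consequence of $\Sigma_R=\partial\Sigma_R$ with the local planar/triangulation analysis of the link of a vertex outlined above.
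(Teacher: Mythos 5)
Your proposal is correct and follows essentially the same route as the paper: you propagate $Z_R=\emptyset$ upward via the ``in particular'' parts of Lemma~\ref{l:base_case} and Lemma~\ref{l:emptyinterior} (the paper runs the same implication as a downward induction to a contradiction), identify $\Sigma_R=\partial\Sigma_R=S_R\cap U_r$ (you by two inclusions using Lemma~\ref{l:decomposition}, the paper by comparing two partitions of $U_r$ --- equivalent content), and read off the degree bounds from Lemma~\ref{l:sphere} together with the bound $\deg_-^{(\Sigma)}+\deg_0^{(\Sigma)}\le 4$. The one place you diverge is instructive: the paper simply asserts that a vertex of $V(p_R)=\Sigma_R$ has exactly two neighbors in $\Sigma_R$, i.e.\ that the cycle $p_R$ is chordless, and deduces $\deg_0^{(\Sigma)}=2$; you recognize that this needs an argument and supply one, confining a putative chord to one of the two vertex-free annuli, taking an innermost chord so that its pocket is a single triangular face, and then contradicting $\deg_-^{(\Sigma)}(w)\ge 1$ or $\deg_+^{(\Sigma)}(w)\ge 1$ at the middle vertex $w$. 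That argument is sound (the pocket is bounded by whichever arc the embedding dictates, not necessarily the ``short'' one, and one should note that the curves $\gamma(p_{R-1}),\gamma(p_R),\gamma(p_{R+1})$ are nested so the pocket indeed contains no vertices --- but these are details at the same level of rigor as the paper's own topological assertions), so your write-up is, if anything, more careful than the paper at this step.
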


\begin{proof}	
	We first show that  $ \Sigma_R= \partial \Sigma_R$ for all $R>r$.
	Assume by contradiction that  there is $R> r$ such that $ \Sigma_R\setminus \partial \Sigma_R\neq \emptyset$. Then, by  Lemma~\ref{l:emptyinterior}, this implies that $Z_{R-1}\neq \emptyset$. By induction we infer $ Z_{r}\neq\emptyset $ which contradicts the assumption. Thus, $ \Sigma_R= \partial \Sigma_R $.

	We now introduce the two following partitions of $U_r$
	\[
	U_r= \bigcup_{R\geq r+1}  \Sigma_R\qquad\mbox{and}\qquad U_{r}  = \bigcup_{R\geq r+1} S_R \cap U_r.
	\] 
	The first one follows since, by construction, $B_r^{(\Sigma)}= B(p_r)$
	and the second one since $U_r \subseteq V\setminus B_r$. 
	Thus, using $ \Sigma_R = \partial \Sigma_R $ and Lemma~\ref{l:sphere}, 
	we obtain $$ \Sigma_R = \partial \Sigma_R=V_R\subseteq S_R \cap U_r. $$
	Necessarily, as the disjoint union over $ R $ on both sides gives $ U_{r} $, we infer the equality $\Sigma_R = \partial \Sigma_R =V_R= S_R \cap U_r$.
	
	Statement (b)  follows directly since $V_R$ is a  cyclic path  by Lemma~\ref{l:sphere}.

	We now turn to statement (a).
	Let $ v\in U_{r} $. By the considerations above, there  is $ R>r $ such that  $ v\in \Sigma_R$. Since $\Sigma_{R} =\partial \Sigma_R= V(p_R) $ for some  closed path $p_R$, the vertex $ v $ has exactly two neighbors in $ \Sigma_{R} $ and $ \deg^{(\Sigma)}_{0}(v)=2 $.
	So, we infer by Lemma~\ref{l:sphere} 
	$$2=  \deg_{0}^{(\Sigma)}(v)\ge \deg_0(v)\geq 2 . $$
	The inequality $ \deg_{-}(v)\ge 1 $ is obvious. 
	Now, we prove by contradiction that $ \deg_{-}(v)\le 2$.
	Indeed, if $ \deg_{-}(v)\ge 3 $, we have by Lemma~\ref{l:sphere}
	\begin{align*}
		\deg_{0}^{(\Sigma)}(v)+\deg_{-}^{(\Sigma)}(v)=\deg_{0}(v)+\deg_{-}(v)\ge 5.
	\end{align*}
	Lemma~\ref{l:base_case} therefore implies $ Z_{R-1}\neq \emptyset $ and by induction $ Z_{r}\neq \emptyset $. This contradicts the assumption and we infer $ \deg_{-}(v)\le2 $.
\end{proof}

We can now  proceed  to deduce the first geometrical results in the case of triangulations from the two lemmas above.

\begin{proof}[Proof of Theorem~\ref{t:triangulation}~(a)]
 Assume $ \deg\ge 6 $ outside of the root. Obviously, $ Z_{0}=\emptyset $  since $ \deg_{+}(o)=\deg(o) \ge 3$ in a triangulation for the root vertex $ o $. Thus, the ``in particular''  of Lemma~\ref{l:base_case} implies $$  Z_{R}= \emptyset  $$ by induction for all $ R\ge 1 $.
 Hence, the statements of Theorem~\ref{t:triangulation}(a)  follow directly from Lemma~\ref{l:Z_Rempty}~(a) and (b) as well as  the observation that $ U_{0}=V\setminus\{o\} $.	
\end{proof}

\subsubsection{The case $ \deg \ge 7$}
  
 In the case $\deg \geq 7$, we estimate how the size of  $Z_r$ increases exponentially as $r$ decays. 
  
\begin{lemma}[The induction step]\label{l:induction_step} Let $G=(V,E)$ be a triangulation such $\deg\ge 7$ outside of $\Sigma_{r}$, $r\ge0$. Then, for $R>r$
\begin{align*}
    |Z_{R}|\ge 2|Z_{R+1}|.
\end{align*}
\end{lemma}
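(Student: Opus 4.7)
The plan is to associate to each $v\in Z_{R+1}$ at least two distinct elements of $Z_R$ in a disjoint fashion. Fix $v\in Z_{R+1}\subseteq\partial\Sigma_{R+1}$ and inspect its planar neighbor fan. Since $v$ lies on the simple closed path $p_{R+1}$, the two path-siblings $s_1,s_2$ split the cyclic order of neighbors of $v$ into a forward arc (strictly outside $p_{R+1}$) and a backward arc (strictly inside $p_{R+1}$). Arguing as in the proof of Lemma~\ref{l:sphere}, forward-arc vertices all lie in $\Sigma_{R+2}$; by the definition of $Z_{R+1}$ there is exactly one such vertex. Hence the backward arc contains $\deg(v)-3\geq 4$ remaining neighbors of $v$, each either in $\partial\Sigma_R$ or in $\Sigma_{R+1}\setminus\partial\Sigma_{R+1}$.

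Enumerate the backward arc cyclically as $x_1,\ldots,x_m$; consecutive entries are adjacent by the triangulation property. Call $u=x_j\in\partial\Sigma_R$ \emph{interior} if both its arc-neighbors $x_{j-1},x_{j+1}$ also belong to $\partial\Sigma_R$. For such $u$, the vertex $v$ is a common neighbor of $u$, $x_{j-1}$, $x_{j+1}$, which forces $x_{j\pm 1}$ to be precisely the two path-siblings of $u$ on $p_R$ and to bound the forward arc of $u$'s own planar fan. Between them on $u$'s forward side only $v$ appears, by the two triangles $(x_{j-1},u,v)$ and $(v,u,x_{j+1})$. Hence $\deg_+^{(\Sigma)}(u)=1$ and $u\in Z_R$. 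Two distinct $v,v'\in Z_{R+1}$ share at most one common backward neighbor, and when present it is an endpoint of both backward arcs, so different $v$'s produce disjoint sets of interior $u$'s.

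It remains to produce at least two interior $u$'s for each $v\in Z_{R+1}$. When $\deg_0^{(\Sigma)}(v)=2$, the backward arc consists entirely of $\partial\Sigma_R$-vertices $(u_1,\ldots,u_k)$ with $k=\deg_-^{(\Sigma)}(v)\geq\deg(v)-3\geq 4$, so $u_2,\ldots,u_{k-1}$ supply at least two interior vertices and the doubling $|Z_R|\geq 2|Z_{R+1}|$ follows by summing over $v$. The main obstacle is the case $\deg_0^{(\Sigma)}(v)>2$, in which inner-shell vertices of $\Sigma_{R+1}\setminus\partial\Sigma_{R+1}$ can intersperse the $\partial\Sigma_R$-neighbors along the backward arc and spoil interiority. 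To handle this case, I plan to exploit that each inner-shell vertex $w$ satisfies $\deg_+^{(\Sigma)}(w)=0$ and $\deg(w)\geq 7$, forcing $w$ itself to carry an ample backward neighborhood in $\partial\Sigma_R$ that supplies additional elements of $Z_R$; alternatively, applying the Copy-and-Paste Lemma~\ref{l:CopyPaste} to a suitable subgraph cut out around the offending local configuration should produce a vertex of positive curvature, contradicting $\deg\geq 7$ and thereby ruling out the bad arrangements.
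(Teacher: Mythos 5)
Your proposal contains a genuine gap, in fact two. First, the step ``interior $u$ $\Rightarrow$ $u\in Z_R$'' is not justified: you assert that the common neighbor $v$ \emph{forces} $x_{j-1},x_{j+1}$ to be the two path-siblings of $u$ on $p_R$, but nothing in the setup rules out that, say, $(u,x_{j-1})$ is a chord of $p_R$ lying on the outer side of $\gamma(p_R)$. In that situation the faces $(x_{j-1},u,v)$ and $(u,x_{j+1},v)$ still exist, so $u$ is ``interior'' in your sense, yet between the chord and the segment of $p_R$ joining $u$ to $x_{j-1}$ there is a pocket of vertices of $\Sigma_{R+1}\setminus\partial\Sigma_{R+1}$, and $u$ may have further forward neighbors inside it; then $\deg_+^{(\Sigma)}(u)\ge 2$ and $u\notin Z_R$. (The same chord issue undermines your initial claim that all backward-arc neighbors of $v$ lie in $\partial\Sigma_R\cup(\Sigma_{R+1}\setminus\partial\Sigma_{R+1})$.) So even your ``easy case'' $\deg_0^{(\Sigma)}(v)=2$ is not closed. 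Second, and more seriously, the case $\deg_0^{(\Sigma)}(v)>2$ is not proved at all: you only announce two possible strategies. The first (each inner-shell vertex ``carries an ample backward neighborhood'') does not obviously produce elements of $Z_R$, since a $\partial\Sigma_R$-vertex sitting below such a configuration can still have several forward neighbors; the second (apply Lemma~\ref{l:CopyPaste} to a region cut out around the bad configuration) is the right idea but is exactly where the real work lies, and it is missing.

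For comparison, the paper's proof does not analyze the fan of $v$ directly. From $v\in Z_{R+1}$ it descends through the two adjacent elementary cells $C_{v,w}$ and $C_{v,w'}$ by repeatedly taking the \emph{third} neighbor (clockwise on one side, counterclockwise on the other), which is possible because $\deg\ge 7$; this produces two paths from $v$ to vertices $u,u'\in\partial\Sigma_R$ whose inner vertices have degree $\ge 4$ in the enclosed subgraph $G_v$, and $v$ itself has degree $\ge 4$ there. Corollary~\ref{c:CopyPaste} applied to $G_v$ then yields at least four boundary vertices of degree $\le 3$, which by the degree bookkeeping must lie on the bottom segment $q\subseteq\partial\Sigma_R$; besides $u,u'$ this gives two vertices whose entire forward neighborhood lies inside $G_v$, hence they belong to $Z_R$. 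Because distinct $G_v$'s meet only along their bounding paths, the contributions are disjoint and $|Z_R|\ge 2|Z_{R+1}|$ follows. Note that this global enclosure automatically takes care of chords and pockets, which is precisely what your local fan argument cannot do without further work.
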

\begin{proof}
	Assume $ Z_{R+1}\neq \emptyset $. We show that any vertex in $ v\in Z_{R+1} $ induces two vertices in $ Z_{R} $. To this end consider the two distinct neighbors $ w,w'\in \partial \Sigma_{R+1} $ such that $ w\sim v\sim w' $ which exist as $ \partial \Sigma_{R+1}=V(p_{R+1}) $, confer Lemma~\ref{l:V_r}. Say $ w $ is to the left and $ w' $ is to the right of $ v $. We now construct paths from $ v $ in $ C_{v,w} $ and $ C_{v,w'} $ to vertices in $ \partial \Sigma_{R} $. Since we are in a triangulation $ v $ and $ w $ are contained in a triangle in $ C_{v,w} $ which is induced by a common neighbor $ w_{1} $ in $ \Sigma_{R+1}\cup\partial \Sigma_{R} $. If $ w_{1} \in\partial \Sigma_{R}$ we set $ u=w_{1} $ and denote the path $ p=(v,u) $. Otherwise, since $ \deg(w_{1})\ge 7 $, there are at least $ 5 $ neighbors of $ w_{1} $ in $ C_{v,w} $. Starting from $ v $ and counting the vertices along the edges around $ w_{1} $ clockwise,  we pick the third vertex $ w_{2} $ (such that there are two more edges between the edge $ (w_{1},v) $ and $ (w_{1},w_{2}) $). If $ w_{2} $ is not in $ \partial \Sigma_{R} $, then we continue inductively by choosing vertices $ w_{1}, w_{2},\ldots,w_{m} $ in the same manner until we reach $ \partial \Sigma_{R} $. That is having chosen $ w_{j} $ we pick $ w_{j+1} $ to be the third neighbor of $ w_{j} $ starting from $ w_{j-1} $ and counting clockwise.	 We then set $ p=(v,w_{1},\ldots,w_{m}) $ and $ u=w_{m} $.
	 Analogously, we choose a path $ w_{1}',\ldots,w_{n}' $ in $ C_{v,w'} $ with difference of choosing $ w_{j+1}' $ to be the third neighbor of $ w_{j}' $ counter clockwise (instead of clockwise) and set $ w'_{n}=u' $.
	 Finally we pick the path between $ u $ and $ u' $ in $ \partial \Sigma_{R} $ within $ C_{v,w}\cup C_{v,w'} $ and denote it by $ q $. Thus, the paths $ p,p' $ and $ q $ enclose a finite subgraph $ G_{v} $ within  $ C_{v,w}\cup C_{v,w'} $ such that all interior vertices within the paths $ p $ and $ p' $ have degree at least $ 4 $ in $ G_{v} $ by construction. Moreover, also $ v  $ has degree at least $ 4 $ within $ G' $ and any vertex of $ G_{v} $ which is not in the boundary path has degree at least $ 7 $ within $ G_{v} $
	 Thus, by Corollary~\ref{c:CopyPaste} there are at least $ 4 $ vertices in the boundary of $ G_{v} $ with degree $ 3 $ or less in $ G_{v} $. By the consideration about the degrees above, these vertices must be in $ q $. Thus, other than $ u,u' $ there are at least two more vertices $ x,y\in V(q) $ with degree $ 3 $ or less in $ G_{v} $. Since $ x,y\in \partial \Sigma_{R} $ they must have degree $ 3 $ and, therefore, $ x,y\in Z_{R} $.
	 
	 To finish the proof we observe that for two distinct vertices $ v,\tilde v\in Z_{R+1} $ the subgraphs $ G_{v} $ and $ G_{\tilde v}  $ intersect at most in the boundary paths $ p,p' $ and $ \tilde p,\tilde p' $ and, therefore, the vertices $ x,y\in Z_{R} $ and $ \tilde x,\tilde y\in Z_{R} $ are distinct.
	 
	 Thus, any vertex in $ Z_{R+1} $ induces at least two vertices in $ Z_{R} $ and, therefore, $ |Z_{R}|\ge 2|Z_{R+1} |$ follows.
\end{proof}

The proof  Theorems~\ref{t:triangulation} for the case $ \deg \ge 7 $ outside of a finite set uses the same idea as the proof in the case $\deg \geq 6$.

\begin{proof}[Proofs of Theorems~\ref{t:triangulation}(b)]
Assume $ \deg \ge 7 $ outside of $ B_{r} $.
By definition we have $Z_{r}\subseteq \partial \Sigma_{r} \subseteq S_{r}$.
Then, we obtain for  any  $R>r + \log_2|S_{r}|$ by  Lemma~\ref{l:induction_step} and a direct induction
\begin{align*}
|Z_{R}|\leq  \frac{|Z_{r}|}{ 2^{R-r}}<1.
\end{align*}
Thus, $$  Z_{R}= \emptyset, $$
for	$R>r + \log_2|S_{r}|$.
Now, the statements of Theorem~\ref{t:triangulation}(b) follow directly from Lemma~\ref{l:Z_Rempty}(a) and (b).
\end{proof}

\subsection{Other connected components of $V\setminus B_{r}$}\label{s:othercomp}
In this section we show how to extend the statement of Theorem~\ref{t:triangulation}(b) to all $v\in S_{R}  $ with $ R>r+\log_{2}|S_{r}| $ if $ \deg\ge 7 $ outside of $ B_{r} $. Previously, we did this only for vertices in the unbounded connected component $  U_{r} $. Now, we look at the other connected components of $ V\setminus B_{r} $ and discuss how the same arguments as above apply.

Here, we discuss briefly the other finitely many finite connected components. We show that $ B_{R}\cap \mathcal{B}_{r} =\emptyset $ for $ R>r+\log_{2}|S_{r}| $.

Let $ r_0\ge 0 $. Observe that  $ V\setminus B_{r_{0}} $ has finitely many connected  components which are finite. We fix one such connected component and denote it by $ C_{r_{0}}$.
Inductively, we choose a decreasing sequence $ C=(C_{r})_{r\geq r_0} $ 
 $$ C_{r+1}\subseteq C_{r}\subseteq C_{r_{0}} $$ of finite connected components of $ V\setminus B_{r} $, $ r\ge r_{0} $. By finiteness of $ C_{r_{0}} $, we have $ C_{r}=\emptyset  $ for  $ r$ large enough.

For $r \geq r_{0}$, we introduce 
\begin{align*}
V^{(C)}_{r}:=\{v\in B_{r}\mid  \mbox{there is }w\in C_{r}\mbox{ such that }v\sim w  \}. 
\end{align*}
Observe that  $ V_{r}^{(C)}\subseteq C_{r_{0}}$ for $ r>r_{0} $.

Indeed, with the same analysis as in Lemma~\ref{l:V_r} we can show that each $V_r^{(C)}$ is induced by a simple  cyclic path $ p_{r}^{(C)} $ such that
\begin{align*}
V_{r}^{(C)}=V(p_{r}^{(C)}).
\end{align*}
However, a fundamental difference is that the unbounded component $ \mathcal{U}(p^{(C)}_{r}) $ of $ \mathcal{S}\setminus\gamma(p_{r}^{(C)}) $ now includes $ o $ and, moreover,
\begin{align*}
B_{r}\cup U_{r}\subseteq V\cap \overline {\mathcal{U}(p^{(C)}_{r})}=: \overline{U}(p^{(C)}_{r})\quad\mbox{and}\quad C_{r}= V\cap \mathcal{B}(p_{r}^{(C)})=: \mathring{B}(p_{r}^{(C)}) 
\end{align*}
for $ R\geq 1 $.
One can now  define a similar new sphere structure $ \Sigma_{r}^{(C)} $, $ r\ge r_{0}  $, on $ C_{r_{0}} $ by letting $$  \Sigma_{r_{0}}^{(C)}= V\setminus C_{r_{0}}= \overline{U}(p^{(C)}_{r_{0}}),\qquad\partial \Sigma_{r_{0}}^{(C)}=V(p_{r_{0}}^{(C)}) $$
and
$$ \Sigma^{(C)}_{r} =(B_{r}\cap C_{r_{0}})\setminus \Sigma_{r-1}^{(C)},\qquad \partial \Sigma^{(C)}_{r}=V(p^{(C)}_{r}),$$ for $ r> r_{0} $.
As in Lemma~\ref{l:sphere-structure} we can show that,  for $ r,r'\ge r_{0} $ and $ v\in \Sigma_{r}^{(C)} $ and $ w\in \Sigma_{r'}^{(C)} $ with $ v\sim w $, we have $$  |r-r'|\leq 1 .$$
Furthermore, to obtain a similar result as in Lemma~\ref{l:sphere} we denote by $ \deg^{(C)}_{\pm} (v)$ (respectively $ \deg_{0}^{(C)} (v)$) the number of neighbors of $ v\in \Sigma_{r}^{(C)} $ in $ \Sigma_{r\pm 1}^{C}  $
(respectively in $ \Sigma_{r}^{(C)} $). Then, following the arguments given in the proof of Lemma~\ref{l:sphere} we obtain for $ r> r_{0} $
\begin{align*}
\partial \Sigma_{r}^{(C)}=V_{r}^{(C)}
\end{align*}
and on $ \partial\Sigma_{r}^{(C)} $
 $$\deg_{+}\ge \deg_{+}^{(C)} \geq 1,\quad \deg_-=\deg_{-}^{(C)}\ge 1 \quad \mbox{and}\quad2\leq \deg_{0}\leq \deg_{0}^{(C)}$$ 
and on $ \Sigma_r ^{(C)}\setminus \partial \Sigma_r^{(C)}$, 
\[
\deg_+^{(C)} =0.
\]
This does not stand in contradiction to the finiteness of $ C $ since $ \partial \Sigma_{r}^{(C)}=V(p_{r}^{(C)} )=\emptyset$
for large $ r $.

On $ \Sigma_{r}^{(C)} $ we can also define the elementary cells $ C_{v}^{(C)} $ and $ C_{v,w}^{(C)} $ for $ v,w\in \partial\Sigma_{r}^{(C)} $ with $ v\sim w $ as above in Section~\ref{s:elementary-cells}. Finally, one defines the set $ Z_{r}^{(C)} $ of vertices $ v $ in $ \partial\Sigma_{r}^{(C)} $ with $ \deg_{+}^{(C)}=1 $.
In the case, where $ \deg \ge 7 $ outside of $ B_{r_{0}} $ we show as in Lemma~\ref{l:induction_step}
\begin{align*}
|Z_{R}^{(C)}|\ge 2|Z_{R+1}^{(C)}|
\end{align*}
for $ R\ge r_{0} $ and, therefore, 
$$ Z_{R}^{(C)} =\emptyset $$
for $ R>r_{0}+\log_{2}|S_{r_{0}}| $. As in Lemma~\ref{l:base_case}, one sees that $ \deg^{(C)}_{0}(v)+\deg^{(C)}_{-}(v) \ge 5$ for $ v\in \Sigma_{R+1} $  implies $ Z_{R}^{(C)}\neq \emptyset $. So there cannot be vertices of degree larger  $ \deg\ge 7 $ in $ B_{R}\cap C $ for $ R>r_{0}+\log_{2}|S_{r_{0}}| $. In other words $$   B_{R}\cap C_{r_{0}}=\emptyset  $$
for  $ R>r_{0}+\log_{2}|S_{r_{0}}| $.
 
Thus, we have proven $ S_{R}=S_{R}\cap U_{r} $ for  $ R>r_{0}+\log_{2}|S_{r_{0}}| $ and, therefore, we deduce the following generalization of Theorem~\ref{t:triangulation} directly from these theorems.
\begin{thm}\label{t:triangulation_gen} Let $G=(V,E)$ be a planar triangulation.
		If  $\deg\ge 7$ outside of $B_{r_{0}}$ for some $r_{0}\ge0$.  Then,  for all $v \in S_{R}$  such that $R>r_{0}+  \log_{2}|S_{r_{0}}|$,
		\begin{align*}
		\deg_{0}(v)=2  \quad\mbox{and}\quad  1\leq    \deg_{-}(v)\leq 2
		\end{align*}
		and  $S_R$  is a simple cyclic path.
\end{thm}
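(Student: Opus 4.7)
The plan is to reduce to Theorem~\ref{t:triangulation}(b), which already handles vertices in the unbounded component $U_{r_{0}}$. Since $V\setminus B_{r_{0}}$ has only finitely many connected components, and exactly one of these is unbounded, it suffices to show that every \emph{bounded} connected component $C$ of $V\setminus B_{r_{0}}$ satisfies $B_{R}\cap C=\emptyset$ for $R>r_{0}+\log_{2}|S_{r_{0}}|$. Combined with Theorem~\ref{t:triangulation}(b) applied to $U_{r_{0}}$, this forces $S_{R}=S_{R}\cap U_{r_{0}}$, whereupon the degree and cyclic-path conclusions are immediate.

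Fix a bounded component $C$ of $V\setminus B_{r_{0}}$. For each $r\geq r_{0}$ I would pick the unique connected component $C_{r}$ of $V\setminus B_{r}$ with $C_{r}\subseteq C$; since $C$ is finite, $C_{r}=\emptyset$ for all sufficiently large $r$. The first step is to develop, inside $C$, exact analogues of the objects built in Sections~\ref{s:sphere-structure} and~\ref{s:elementary-cells}: define
$$V_{r}^{(C)}:=\{v\in B_{r}\mid v\sim w\text{ for some }w\in C_{r}\},$$
show by the same Jordan-curve argument as in Lemma~\ref{l:V_r} that $V_{r}^{(C)}$ is the trace of a simply closed cyclic path $p_{r}^{(C)}$, and use this to define the induced new sphere structure $\Sigma_{r}^{(C)}$ and its boundary $\partial\Sigma_{r}^{(C)}=V_{r}^{(C)}$ inside $C$. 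The crucial book-keeping difference is that the roles of the bounded and unbounded components of $\mathcal{S}\setminus\gamma(p_{r}^{(C)})$ are now reversed: the root lies in the unbounded component while $C_{r}$ lies in the bounded one. With this orientation change absorbed, the analogue of Lemma~\ref{l:sphere} holds verbatim: on $\partial\Sigma_{r}^{(C)}$ one has $\deg_{+}\geq\deg_{+}^{(C)}\geq 1$, $\deg_{-}=\deg_{-}^{(C)}\geq 1$, $2\leq\deg_{0}\leq\deg_{0}^{(C)}$, and on $\Sigma_{r}^{(C)}\setminus\partial\Sigma_{r}^{(C)}$ one has $\deg_{+}^{(C)}=0$.

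Next, define the elementary cells $C_{v}^{(C)}$, $C_{v,w}^{(C)}$ inside $C$ as in Section~\ref{s:elementary-cells} and the set
$$Z_{r}^{(C)}:=\{v\in\partial\Sigma_{r}^{(C)}\mid \deg_{+}^{(C)}(v)=1\}.$$
The proofs of Lemmas~\ref{l:emptyinterior},~\ref{l:base_case}, and~\ref{l:induction_step} carry over verbatim, because they rely only on the Copy-and-Paste Lemma and on the hypothesis $\deg\geq 7$, both of which are intrinsic and do not see which side of the enclosing cyclic path contains the root. In particular, the doubling estimate $|Z_{r}^{(C)}|\geq 2|Z_{r+1}^{(C)}|$ holds for $r\geq r_{0}$. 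Since $Z_{r_{0}}^{(C)}\subseteq S_{r_{0}}$, exponential decay yields $Z_{R}^{(C)}=\emptyset$ for every $R>r_{0}+\log_{2}|S_{r_{0}}|$.

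Finally, the analogue of Lemma~\ref{l:Z_Rempty} closes the argument: if some $v\in\Sigma_{R+1}^{(C)}$ existed, then the bound $\deg_{0}^{(C)}(v)+\deg_{-}^{(C)}(v)\geq 5$ (forced by $\deg\geq 7$ and $\deg_{+}^{(C)}=0$ off the boundary) would produce an element of $Z_{R}^{(C)}$, contradicting $Z_{R}^{(C)}=\emptyset$. Hence $\Sigma_{R+1}^{(C)}=\emptyset$, i.e.\ $B_{R+1}\cap C=\emptyset$, completing the reduction. The main obstacle I anticipate is purely bookkeeping: rigorously checking that the Jordan-curve argument of Lemma~\ref{l:V_r} and the sphere identifications of Lemma~\ref{l:sphere} really do go through once the bounded/unbounded roles are interchanged, in particular verifying that the connectedness arguments (``any geodesic from $o$ to $v$ must cross $\gamma(p_{r}^{(C)})$'') have the right analogue when the root now sits in the unbounded side. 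Once this orientation subtlety is handled, everything else is mechanical.
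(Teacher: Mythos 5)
Your proposal follows essentially the same route as the paper's own treatment in Section~\ref{s:othercomp}: the sets $V_{r}^{(C)}$, the reversed-orientation sphere structure $\Sigma_{r}^{(C)}$, the transferred Lemmas~\ref{l:sphere}--\ref{l:induction_step}, the doubling bound $|Z_{r}^{(C)}|\ge 2|Z_{r+1}^{(C)}|$ forcing $Z_{R}^{(C)}=\emptyset$ beyond the threshold, and the reduction to Theorem~\ref{t:triangulation}(b) are exactly the paper's argument. Only your last step deserves the same care as the paper's (a boundary vertex of $\partial\Sigma_{R+1}^{(C)}$ may have $\deg_{+}^{(C)}\ge 3$, so one should descend to the innermost nonempty shell, or invoke the analogue of Lemma~\ref{l:emptyinterior}, before applying the base-case lemma), but this is the same bookkeeping the paper itself leaves implicit.
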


\begin{rem}\label{rem:deg6}
In order to deduce a conclusion similar to  Theorem~\ref{t:triangulation}(b) in the case $\deg \geq 6$ outside a finite  ball $B_r$, it is sufficient to prove that there is $R\geq r$ such that $Z_{R}=\emptyset$. 
Whether it is possible to prove this remains an open question.
In Section~\ref{s:counter-example} we present an example that satisfies $\deg \geq 6$ outside of  $B_1$ and 
$Z_1= \emptyset$. 
Thus, if we continue this example outside of $B_2$ in any graph $H$, such that it is still a planar triangulation
  with $ \deg \geq 6$ outside of $B_1$, 
  then $H$ will satisfies the conclusion of Theorem~\ref{t:triangulation}(a).
\end{rem}

\subsection{Spanning trees   for triangulations}

In this section, we prove Theorem \ref{t:spanning} on spanning trees for planar triangulations.

Recall the new sphere structure $ \Sigma_{R} $ introduced in Section~\ref{s:sphere-structure}.
Given a vertex $v\in \Sigma_R$ and $w\in V$  such that $v\sim w$, we say that  $ w $ is a \emph{forward (respectively horizontal and backward) neighbor} if 
$w\in \Sigma_{R+1}$ (respectively $\Sigma_R$ and $\Sigma_{R-1}$). 

\begin{thm}\label{t:spanning_triang} Let $G$ be a planar triangulation. Assume one of the following assumptions:
	\begin{itemize}
		\item [(a)]  $ \deg \ge 6 $ outside of the root.
		\item [(b)] 	$\mathrm{deg}\geq 7$ outside of a finite set. 
	\end{itemize}
Then,  then there exists a spanning tree $T$ of $G$ such the vertex degrees of $T$ and $G$ differs at most by $4$  outside of a finite set, where the finite set is empty in case $ \mathrm{(a)} $. Furthermore, $ T $ and $ G $ have the same sphere structure.
\end{thm}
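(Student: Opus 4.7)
The plan is to build $T$ by choosing, for each $v\neq o$, exactly one backward neighbor as its ``parent,'' and letting the parent edges form the tree. Let $K\subseteq V$ be the finite set (empty in case~(a)) outside of which Theorem~\ref{t:triangulation} and Theorem~\ref{t:triangulation_gen} apply: the refined sphere $\partial\Sigma_R$ coincides with $\Sigma_R$ and with $S_R$, is a simple cyclic path, and every $v\in\partial\Sigma_R$ satisfies $\deg_0(v)=2$ and $\deg_-(v)\in\{1,2\}$. Inside $K$ I will take any BFS spanning tree of $G$ rooted at $o$, so the entire issue is the construction outside $K$.

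The crucial combinatorial input concerns the forward fan of a vertex $v\in\partial\Sigma_R$ outside $K$. Denote by $v_-,v_+$ its two horizontal neighbors (its two neighbors on the cycle $\partial\Sigma_R$) and list the forward neighbors $w_1,\ldots,w_k$ in the cyclic order of their appearance in the link of $v$ between $v_+$ and $v_-$. Since $G$ is a triangulation, consecutive neighbors of $v$ span a triangle, so $v_+\sim w_1$, $w_i\sim w_{i+1}$, and $w_k\sim v_-$; in particular $\deg_-(w_1)=\deg_-(w_k)=2$. For $1<i<k$ I claim $\deg_-(w_i)=1$: any additional backward neighbor $v'$ of $w_i$ would, by the same triangulation/cyclic-link argument applied at $w_i$, be adjacent to $v$ on $\partial\Sigma_R$, hence $v'\in\{v_-,v_+\}$; but the unique face incident to $(v,v_+)$ on the forward side of $v$ is the triangle $(v,v_+,w_1)$, and likewise for $v_-$, which forces $i=1$ or $i=k$, a contradiction. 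Notice also that $k=1$ cannot occur, since then $w_1=w_k$ would need three backward neighbors $v,v_-,v_+$, contradicting $\deg_-\le 2$.

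I now define the parent map $\pi$. Fix a consistent cyclic orientation of every $\partial\Sigma_R$ outside $K$ (say counterclockwise around $o$, induced by the planar embedding). For $w\in\partial\Sigma_{R+1}$ outside $K$: if $\deg_-(w)=1$ let $\pi(w)$ be the unique backward neighbor; if $\deg_-(w)=2$, the two backward neighbors of $w$ are adjacent on $\partial\Sigma_R$ (same triangulation/cyclic-link argument), and I let $\pi(w)$ be the earlier of the two in the chosen cyclic orientation. Applied to the fan $w_1,\ldots,w_k$ of $v$, exactly one of $w_1,w_k$ picks the corresponding horizontal neighbor $v_\pm$ instead of $v$, while every other $w_i$ picks $v$. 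Hence $v$ has $\max(k-1,0)$ children in $T$, so
\begin{align*}
\deg_G(v)-\deg_T(v)=\bigl(\deg_-(v)+2+k\bigr)-\bigl(1+\max(k-1,0)\bigr)\le 4.
\end{align*}
Extending $\pi$ inside $K$ by any BFS spanning tree of $G$ rooted at $o$ produces a spanning tree $T$ of $G$. Induction on $R$ then gives $d_T(o,\cdot)=d_G(o,\cdot)$ on each $\partial\Sigma_R=S_R$ outside $K$, and BFS inside $K$ gives the same equality there, so $T$ and $G$ share the same sphere structure.

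The main obstacle is the second paragraph: correctly extracting the fan structure of the forward neighbors of $v$ from the planar embedding and the triangulation hypothesis, and in particular ruling out extra backward neighbors for the interior $w_i$. Once this combinatorial picture is in place, the parent rule and the resulting degree bound reduce to a short bookkeeping exercise, and the ``same sphere structure'' assertion follows immediately from the definition of $\pi$ together with an easy induction.
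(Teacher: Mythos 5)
Your proposal is correct and takes essentially the same route as the paper: both rest on the sphere structure results (cyclic spheres, $\deg_{0}=2$, $\deg_{-}\le 2$ outside a finite set), discard the horizontal edges, and use the planar cyclic orientation to retain exactly one backward edge per vertex, with the same bookkeeping of two horizontal, at most one backward and one forward edge removed, hence a degree change of at most $4$. Your parent-map formulation with the explicit fan analysis is just a more detailed packaging of the paper's edge-removal procedure, in which the ``most right'' forward neighbor with two backward neighbors drops its edge to $v$.
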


\begin{proof}Let $ r=0 $ in the case (a) and let $r  $ be such that $ \deg \ge 7 $ outside of $ B_{r} $ in the case (b).

We construct the spanning tree inductively and assume we already have chosen a spanning tree of $B_{R}$ in $G$ for $R>\log_2{|S_{r}|}+r$ (which means $ R>0 $ in case (a)).

By Theorem \ref{t:triangulation_gen}, we know that every sphere $ S_{R}\cap U_{r} $ is a simple cyclic path for $ R>r+\log_{2}|S_{r}| $ and by choosing $ R $ even larger we have 
$ S_{R}=S_{R}\cap U_{r} $. (Alternatively one can also apply Theorem~\ref{t:triangulation_gen} so one does not have increase $ R $ further.)

This  implies that every vertex has  two neighbors in the same sphere. For these we remove the connecting edges.
 Since $\mathcal S$ is oriented,  for some fixed $v\in \Sigma_R$,    we can identify the most left and the most right forward neighbor in the next sphere $\Sigma_{R+1}$.  By planarity,  only the most left and the most right forward of these neighbors   $\Sigma_{R+1}$ may have more than one (and thus two) backward neighbors. We consider only  the most right forward neighbor in  $\Sigma_{R+1}$ and  in the case that it has two  backward neighbors, we remove the edge joining $v$.  

For any given vertex we therefore remove at most two horizontal edges, one   edge to a forward neighbor and one  edge to a backward neighbor.
Moreover after this procedure, every vertex has exactly one backward neighbor  and no horizontal edges, meaning that the graph obtained by removing these edges is a tree. 
\end{proof}

\begin{rem}
Using Theorem~\ref{t:triangulation_gen} for which we above sketched the proof allows us to quantify the finite set which is excluded  as the ball $ B_{R} $ with $ R=r+ \log_{2}|S_{r}|$, where $ r $ is such that $ \deg\ge7 $ outside of $ B_{r} $. 
\end{rem}

\subsection{Unique continuation of eigenfunctions for triangulations}\label{s:unique}

In this section, we study the unique continuation of eigenfunctions. Our result is not limited to the Laplacian but holds for general nearest neighbor operators.  We start with a definition.

Let a locally finite graph $G=(V,E)$ be given. A linear operator $A$ defined on a subspace of $C(V)=\C^{V}$ is called a \emph{nearest neighbor operator associated to} $G$, if
if has a matrix representation with respect to the standard basis which is given by some $a:V\times V\to\C$ such that for $v\neq w$,  $$ a(v,w)\neq0\qquad\mbox{ if and only if} \qquad v\sim w .$$
In this case, $A$ acts  as
$$(A\ph)(v)=\sum_{w\in V} a(v,w)\ph(w)=a(v,v)\ph(v)+\sum_{v\sim w} a(v,w)\ph(w)$$
where the sum is finite due to local finiteness of the graph. Moreover, the compactly supported function are included in the domain of definition of $ A $ as $ A $ allows a matrix representation with respect to the standard basis.

The following theorem  is a unique continuation result for eigenfunctions in the case of triangulations. 

\begin{thm}\label{t:EFcompactsupport-triang} Let $G$ be a planar triangulation such that $\mathrm{deg}\geq 7$ outside of a finite set and let $A$ be a  nearest neighbor operator. Then, there are only finitely many linearly independent eigenfunctions of compact support.
\end{thm}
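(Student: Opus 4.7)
The plan is to combine the fine geometric structure from Theorem~\ref{t:triangulation_gen} with a one-step unique continuation argument at the outermost sphere hitting $\supp u$. The key triangulation identity that drives everything is that a ``middle'' forward neighbor of a vertex has exactly one backward neighbor, which collapses the eigenvalue equation and forces the outermost support value to vanish.

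First I would invoke Theorem~\ref{t:triangulation_gen} to fix a finite ball $B_{R_1}$ outside of which every vertex $v$ sits on a sphere $S_R$ that is a simple cyclic path and satisfies $\deg_0(v)=2$ and $1\le\deg_-(v)\le 2$. Combined with the hypothesis $\deg(v)\ge 7$, this forces $\deg_+(v)\ge 3$ for every such $v$. This is the only geometric input that will be needed.

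Next, arguing by contradiction, I would assume $u$ is a compactly supported eigenfunction $Au=\lambda u$ with $\supp u\not\subseteq B_{R_1}$, let $R$ be the largest index with $S_R\cap\supp u\neq\emptyset$ (so $R>R_1$), pick $v\in S_R\cap\supp u$, and enumerate its forward neighbors $w_1,\ldots,w_k$, $k\ge 3$, in the cyclic order they appear on $S_{R+1}$. Since $G$ is a triangulation, the faces $vw_iw_{i+1}$ for $1\le i<k$ are the backward faces along the horizontal edges $(w_i,w_{i+1})$. For a middle index $2\le i\le k-1$, the two horizontal edges at $w_i$ are $(w_i,w_{i-1})$ and $(w_i,w_{i+1})$ (by $\deg_0(w_i)=2$), so the two backward faces at $w_i$ must be $vw_{i-1}w_i$ and $vw_iw_{i+1}$. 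In particular $v$ is the \emph{only} backward neighbor of $w_i$, and every remaining neighbor of $w_i$ lies in $S_{R+2}$.

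Then, by maximality of $R$ the function $u$ vanishes on $S_{R+1}\cup S_{R+2}$. Evaluating the eigenvalue equation $(Au)(w_i)=\lambda u(w_i)=0$ at any middle $w_i$ collapses to $a(w_i,v)u(v)=0$, and since $a(w_i,v)\neq 0$ we conclude $u(v)=0$, contradicting $v\in\supp u$. Hence every compactly supported eigenfunction must be supported inside the fixed finite set $B_{R_1}$, and the space of all such eigenfunctions therefore has dimension at most $|B_{R_1}|<\infty$.

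The hard part will not be the final linear collapse but securing the two geometric prerequisites simultaneously: $\deg_+\ge 3$ (so ``middle'' forward neighbors exist) and the simple cyclic structure of $S_{R+1}$ (so ``middle'' and ``horizontal neighbor'' are meaningful and the two backward faces at $w_i$ are forced to coincide with $vw_{i-1}w_i$ and $vw_iw_{i+1}$). Both rest on $\deg\ge 7$ via Theorem~\ref{t:triangulation_gen}, and hence ultimately on the Copy-and-Paste machinery of Section~\ref{s:CopyPaste}. Under only $\deg\ge 6$ one cannot guarantee $\deg_+\ge 3$, which matches the subtlety highlighted in Remark~\ref{rem:deg6} and explains why the present clean argument is tied to the $\deg\ge 7$ regime.
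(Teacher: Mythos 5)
Your proposal is correct and is essentially the paper's own argument: your claim that a ``middle'' forward neighbor of $v$ has $v$ as its unique backward neighbor is precisely the content of Lemma~\ref{l:injective} (proved there by the same triangulation-plus-planarity reasoning, with $\deg_+\ge 3$ coming from $\deg\ge 7$ and $\deg_-+\deg_0\le 4$), and evaluating the eigenvalue equation at the outermost sphere meeting $\supp u$ is just the pointwise form of the paper's injectivity argument for the matrices $E_R$ in the polar decomposition, leading to the same conclusion that compactly supported eigenfunctions live in a fixed finite ball. The only cosmetic difference is that the paper propagates the support via these transfer-type matrices rather than a maximal-sphere contradiction.
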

\begin{rem}
By the use of Theorem~\ref{t:triangulation_gen}, one can even quantify the set where the compactly supported  eigenfunctions are supported. Indeed, if $ \deg\ge 7 $ outside of $ B_{r} $, then all compactly supported eigenfunctions are supported in $ B_{R} $ with $ R=r+\log_{2}|S_{r}| $. 
\end{rem}

To prove Theorem~\ref{t:EFcompactsupport-triang} we introduce the polar coordinate representation of nearest neighbor operators as it is used in \cite{FHS,K2}. For a function $\ph\in C(V)$, let $\ph_{r}$ be the restriction of $\ph$ to $C(S_{r})$ and let $s_{r}=|S_{r}|$, $r\ge0$. For a nearest neighbor operator $A$,  let the matrices $E_{r}\in \C^{s_{r+1}\times s_{r}}$, $D_{r}\in\C^{s_{r}\times s_{r}}$, $E_{r}^{+}\in\C^{s_{r}\times s_{r+1}}$ be given such that
$$(A\ph)_{r}=-E_{r-1}\ph_{r-1}+D_{r}\ph_{r}-E_{r}^{+}\ph_{r+1},$$
for all $ \ph $ in the domain of $ A $.
In particular, the matrix $D_{r}$ is  the restriction of $A$ to $C(S_{r})$.

The key point is to prove the matrices $E_r$ are injective. This comes from the following geometric lemma. 

\begin{lemma}\label{l:injective}
Let $G$ be a planar triangulation.
Assume that $\Sigma_R=\partial\Sigma_R$ for all $R> r$ and that $\deg \geq 7$ outside  of $B_{r}^{(\Sigma)}$.
Then for all $R >r$  and all  $v\in \Sigma_R$, there exists $w\in \Sigma_{R+1}$  with $ v\sim w $ and
\[
\deg_{-}(w)=1.
\]
In particular,  for  any nearest neighbor operator associated to $G$ and all $R\ge r$, the matrices $E_R$ are injective.
\end{lemma}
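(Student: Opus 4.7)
The plan is to first establish the geometric part---that each $v\in\Sigma_R$ has some forward neighbor $w\in\Sigma_{R+1}$ with $\deg_-(w)=1$---and then deduce injectivity of $E_R$ as a sparse-row consequence. Under the hypotheses, Lemma~\ref{l:sphere} combined with the proof of Lemma~\ref{l:Z_Rempty} gives that every $\Sigma_R$ with $R>r$ is a simple cyclic path with $\deg_0(v)=2$ and $\deg_-(v)\leq 2$ at each vertex, so $\deg(v)\geq 7$ forces $\deg_+(v)\geq 3$. Writing the forward neighbors of $v$ as $w_1,\ldots,w_m$ in the cyclic order around $v$, these are $m\geq 3$ consecutive vertices on the cyclic path $\Sigma_{R+1}$.

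The geometric heart of the argument is the following arc-overlap fact: for every $w\in\Sigma_{R+1}$ the backward neighbors of $w$ form a consecutive arc $[a_w,b_w]$ in the cyclic order of $\Sigma_R$ (by planarity and the triangulation structure around $w$), and for two horizontally adjacent $w,w'\in\Sigma_{R+1}$ these arcs overlap in exactly one vertex, namely $b_w=a_{w'}$. The one-vertex overlap follows from the observation that the horizontal edge $(w,w')$ lies on the cycle $\Sigma_{R+1}$ and hence has one adjacent face inside the cycle (toward $\Sigma_R$) and one outside (toward $\Sigma_{R+2}$); the inside face is a triangle whose third vertex is the unique common backward neighbor of $w$ and $w'$. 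Applied to the chain $w_1,\ldots,w_m$, whose arcs all contain $v$, the identity $b_i=a_{i+1}$ combined with $v\in[a_i,b_i]\cap[a_{i+1},b_{i+1}]$ forces $b_i=a_{i+1}=v$ for $1\leq i\leq m-1$. Consequently every middle index $2\leq i\leq m-1$ satisfies $a_i=b_i=v$ and so $\deg_-(w_i)=1$, and such an index exists since $m\geq 3$.

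For the \emph{in particular} part, recall that the row of $E_R$ indexed by $w\in S_{R+1}$ has entries $-a(w,v)$ at columns $v\in S_R$ with $v\sim w$ and zeros elsewhere. Given $\psi\in\C^{s_R}$ with $E_R\psi=0$, for each $v\in\Sigma_R=S_R$ (the equality holding outside a finite set by Lemma~\ref{l:Z_Rempty}) pick the forward neighbor $w$ produced above with $\deg_-(w)=1$: the $w$-row of $E_R$ then has a single nonzero entry $-a(w,v)\neq 0$, so $0=(E_R\psi)(w)=-a(w,v)\psi(v)$ yields $\psi(v)=0$, and a brief check handles the endpoint $R=r$ and any finite exceptional set. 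The main technical point is a clean derivation of the one-vertex arc-overlap fact from the planar structure of $\Sigma_{R+1}$; once this is in place, both the geometric claim and the injectivity of $E_R$ follow mechanically.
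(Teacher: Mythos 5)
Your overall architecture is the same as the paper's: use the sphere machinery to get $\deg_+(v)\ge 3$ for $v\in\Sigma_R$, observe that the forward neighbors $w_1,\dots,w_m$ of $v$ form a consecutive chain on the cycle $\Sigma_{R+1}$ whose consecutive pairs span triangles with apex $v$, conclude that the middle $w_i$ satisfy $\deg_-(w_i)=1$, and then deduce injectivity of $E_R$ from the rows with a single nonzero entry (your contrapositive-free version of the paper's argument is equivalent). The paper, however, justifies $\deg_-(w_i)=1$ by a purely \emph{local} argument: the two faces $vw_{i-1}w_i$ and $vw_iw_{i+1}$ exhaust the inner sector at $w_i$, so any further edge at $w_i$ emanates into the outer side of the curve $\gamma(p_{R+1})$ and cannot reach $\Sigma_R$.

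The gap is in your substitute for this step. You route the argument through the global claim that for \emph{every} horizontally adjacent pair $w,w'\in\Sigma_{R+1}$ the backward arcs overlap in exactly one vertex, and you justify it only by noting that the inner face at the edge $(w,w')$ is a triangle whose apex is a common backward neighbor. That observation yields the \emph{existence} of a common backward neighbor (the extreme vertex of each arc), but not uniqueness, and uniqueness does not follow from planarity and the triangulation structure alone: a chord-free triangulated annulus without interior vertices can contain a ``double fan'' in which two adjacent outer vertices $w,w'$ jointly cover the whole inner cycle, sharing two common backward neighbors (the apex $u$ and a second vertex $z$ where the two arcs meet again on the far side), all remaining outer vertices being attached only to $z$. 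Excluding this wrap-around genuinely uses the degree hypotheses (in that configuration some vertex of $\Sigma_R$ would have $\deg_+\le 1$, contradicting $\deg_+\ge 3$), or else one argues locally at the middle $w_i$ as the paper does, where no uniqueness statement is needed because the shared apex of consecutive arcs is $v$ itself and the two incident triangular faces leave no room for a second backward edge. Since you explicitly defer ``a clean derivation of the one-vertex arc-overlap fact,'' this is a missing idea rather than a routine verification; it is repairable, and with it the rest of your proof (including your correct remark that one must reconcile $S_R$ with $\Sigma_R$ and treat the endpoint $R=r$, a point on which the paper itself is loose) goes through.
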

\begin{proof}
Note that the assumption implies that $\deg_{\pm}^{(\Sigma)}=  \deg_{\pm}$ and $ \deg_{0}^{(\Sigma)}=\deg_{0} $ outside of  $B_{r}^{(\Sigma)}$.
Let $v\in   \Sigma_R$ for some $R>r$.
By Theorem~\ref{t:triangulation},
	we have  $ \deg_{-}(v)+\deg_{0}(v)\leq 4 $.    Hence, $ \deg_+^{(\Sigma)}(v)=\deg_{+}(v)\ge 3 $ by the assumption $ \deg\ge7 $ outside of $ B_{R_{0}}^{[\Sigma)} $.
Since all the elementary cells are empty,
this means that $v\in \partial \Sigma_R$ has at least 3 neighbors in $ \Sigma_{R+1}$.	
Recall here that $\Sigma_{R+1}= \partial \Sigma_{R+1}$ is a cyclic closed path.
 Since $G$ is a triangulation, for the vertex $ v\in\partial \Sigma_{R}  $ the forward neighbors induce a path in $\partial \Sigma_{R+1}$ of length $\geq 2$. By planarity, the inner vertices in the path cannot  have another neighbor  in $\partial \Sigma_{R}$ different from $v$.	
	
As a consequence,  the matrix $ E_{R-1} $ is injective. Indeed, let $\ph_{R-1}$ be a non trivial function on $\partial \Sigma_{R-1}$.  That is,  there exists $v \in  \partial \Sigma_{R-1} $ such that $\ph_{R-1} (v)\neq 0$. 
As a consequence, with $ w\in\Sigma_{R} $, $ w\sim v $ and $ \deg_{-}(w)=1 $ as above
\[
E_{R-1}\ph_{R-1}(w)= a(v,w) \ph_{R-1} (v)\neq 0,
\]
and the conclusion follows.
\end{proof}

With the help of the lemma above the proof of Theorem~\ref{t:EFcompactsupport-triang} is along the lines of the proof of \cite[Theorem~9]{K2}.

\begin{proof}[Proof of  Theorem~\ref{t:EFcompactsupport-triang} ]
Let $G$ be a triangulation which satisfies $\mathrm{deg}\geq 7$ outside of a ball and let $R_0\geq r_0 + \log_{2}|S_{r_0}|$. By Lemma~\ref{l:injective} the matrices $E_{R}$ are injective for $R\ge R_{0}$. Let $R\ge R_{0}+1$.
Suppose there is an eigenfunction $\ph$ of $A$ such that  $\ph_{R-1}\not\equiv 0$. Rewriting the eigenvalue equation $(A\ph)_R=\lm\ph_R$ on the $R$-th sphere, one gets
$$E_{R-1}\ph_{R-1}=(D_R-\lm)\ph_R-E_R^{+}\ph_{R+1}.$$
Since $E_{R-1}$ is injective, either $\ph_{R}$ or $\ph_{R+1}$ must be non-zero. Hence, if an eigenfunction is supported on a sphere $S_{R}$ with $R>R_{0}$, then it has infinite support.

Since the space of functions supported on a ball is finite dimensional, there can be at most finitely many linearly independent eigenfunctions of compact support.
\end{proof}

\subsection{Counter-example and continuation to  tessellations} \label{s:counter-example}

In this section,  we give an example showing, that $\deg\ge6$ or even $\ka_{C}\le0$ outside of a finite set is not enough to exclude compactly supported eigenfunctions such as it was shown in Theorem~\ref{t:EFcompactsupport-triang}.

\begin{figure}[!h]
\scalebox{0.6}{\includegraphics{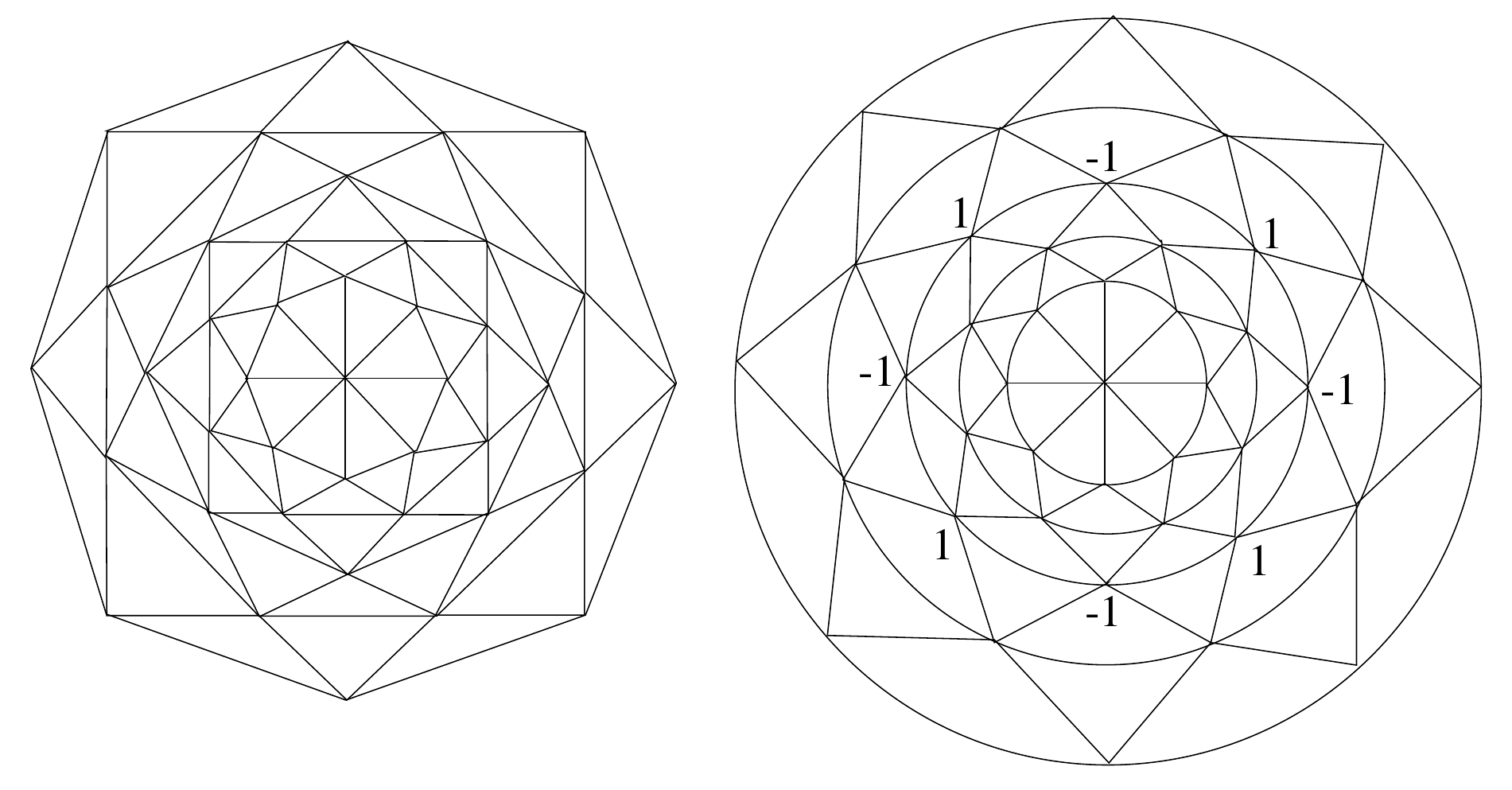}}
\caption{\label{f:example} Two  illustrations of the first 6 distance spheres of  the same tessellation allowing  for infinitely many linearly independent compactly supported eigenfunctions.}
\end{figure}
\begin{eg}\label{e:example1}
The planar graph whose ball $B_{6}$ are pictured in Figure~\ref{f:example} is given such that the root vertex in the middle is adjacent to 8 triangles, the vertices in the first sphere are adjacent to 5 triangles and all further vertices are adjacent to exactly 6 triangles. Hence, $\deg=6$ and $\ka_{C}=0$ outside of $B_{1}$. However, there are  eigenfunctions to the eigenvalue $8$ for $\Delta$ supported on any sphere $S_{r}$, $r\geq 1$. One eigenfunction is illustrated  on the right hand side of Figure~\ref{f:example} in the third sphere.
\end{eg}

In \cite{KLPS} it is shown that $ \ka_{C}\leq 0 $ everywhere implies the absence of compact supported eigenfunction. The example above underscores clearly that our results are not merely simple perturbation results of \cite{KLPS} but the issue in question is much more subtle.

Furthermore, we conclude that graphs as the one above cannot be changed on a finite set such that one obtains a graph with $ \ka_{C}\leq 0 $. 
We finish this section to show that such a procedure  is indeed  possible if one has $ \deg\ge7 $ outside of a finite set.

To this end, we recall the notion of  strictly locally tessellating graphs from \cite{K2} which slightly more general than tessellations as they allow for unbounded faces.
We call a planar locally finite graph $G$   \emph{strictly locally tessellating}  if the following three assumptions are satisfied:
\begin{itemize}
	\item [(T1)] Every edge is included in   two faces.
	\item [(T2)] Every two faces are either disjoint or intersect in one vertex or one edge.
	\item [(T3)] Every face is homeomorphic to a closed disc or to the half space. 
\end{itemize}
In a strictly locally tessellating graph the vertex curvature can seen to be equal to (confer \cite[Lemma~3]{K2})
\begin{align*}
	\ka(v)    =1-\frac{\mathrm{deg}(v)}{2} +\sum_{f\in F, f\ni v}\frac{1}{\mathrm{deg}(f)}.
\end{align*}
In \cite{BP1,BP2} tessellations are considered, that are strictly locally tessellating graphs with the following stronger assumption replacing (T3)
\begin{itemize}
	\item [(T3*)] Every face is homeomorphic to a closed disc.
\end{itemize}

We show that a planar graph with $ \deg\ge7 $ outside of a finite set can be embedded modified on a finite set such that one obtains a strictly locally tessellating graph with non-positive corner curvature. This is uselful as for non-positively corner curved graphs various subtle structural results are known, see e.g.  \cite{BP1,BP2,K2}.

\begin{thm}\label{t:non_positive_curvature} Let $G=(V,E)$ be a planar graph such $\deg\ge 7$ outside of $B_{r}$ for some $r\ge0$. Then, there is a strictly locally tessellating graph $G'=(V',E')$ and $R\ge \log_{2}|S_{r}|+r+1$ such that
	\begin{itemize}
		\item [(a)] $\ka_{C}'\leq 0$.
		\item [(b)] $V'={(}V\setminus B_{R}{)}\cup\{o\}$ and $G'_{V'\setminus \{o\}}=G_{V\setminus B_{R}}$.
		\item [(c)] $d'(\cdot,o)+R=d(\cdot,o)$,
	\end{itemize}
	Moreover, there is a tessellation $G''=(V',E'')$ and $R\ge r$ such that (a), (c) and $E'\subseteq E''$.
\end{thm}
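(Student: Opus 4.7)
The idea is to excise a sufficiently large initial ball $B_R$ and contract it to a single root $o$ joined to every vertex of $S_{R+1}$. The hypothesis $\deg\ge 7$ is used to guarantee a one-degree safety margin so that the boundary vertices of $S_{R+1}$ retain degree $\ge 6$ in the contracted graph, which together with the standard face bound $\deg(f)\ge 3$ delivers $\ka_C'\le 0$ essentially for free.

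First I would invoke Theorem~\ref{t:main}(b), whose quantitative form is given by Theorem~\ref{t:triangulation_gen} (transferred to the general planar case by embedding into a triangulation supergraph), to choose an $R\ge r+\log_2|S_r|+1$ large enough that for every $v$ with $d(o,v)>R$ we have $\deg(v)\ge 7$, $\deg_0(v)\le 2$, $1\le\deg_-(v)\le 2$, and that each sphere $S_{R+k}$, $k\ge 1$, is the trace of a simply closed path in some planar supergraph. A short growth argument
\[
|S_{k+1}|\ge \tfrac{3}{2}|S_k|,
\]
which follows from each $v\in S_k$ having $\deg_+(v)\ge 3$ while each $w\in S_{k+1}$ has $\deg_-(w)\le 2$, allows me to enlarge $R$ (if needed) to ensure $|S_{R+1}|\ge 6$; this will be the curvature bound at $o$.

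Next I would set $V':=(V\setminus B_R)\cup\{o\}$ and let $E'$ consist of all edges of $G$ with both endpoints in $V\setminus B_R$, together with one edge $\{o,v\}$ for each $v\in S_{R+1}$. Planarity is obtained by restricting the embedding of $G$ to $V\setminus B_R$ and inserting $o$ in the bounded open region formerly occupied by $B_R$; the new edges to $S_{R+1}$ are drawn without crossings using the cyclic ordering of $S_{R+1}$ from Theorem~\ref{t:main}(b). Properties (b) and (c) then follow: since every $v\in S_{R+1}$ has $\deg_-(v)\ge 1$ neighbor in $S_R\subseteq B_R$, it becomes a direct neighbor of $o$, and a straightforward sphere-by-sphere induction gives $d'(\cdot,o)+R=d(\cdot,o)$.

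For the face analysis, I would show that a face of $G'$ is either inherited from a face of $G$ contained in $V\setminus B_R$, or contains $o$; in the latter case its boundary walk has the form $o,v,u_1,\ldots,u_m,w,o$ where $v,w\in S_{R+1}$ are cyclically consecutive and $u_1,\ldots,u_m\in V\setminus B_R$ is the part of the original face of $G$ that survives in $V\setminus B_R$ (with $m=0$ in the triangular case $v\sim_G w$). This description gives (T1)--(T3), so $G'$ is strictly locally tessellating with every face of degree at least $3$. Then I would verify (a) vertex by vertex:
\begin{itemize}
\item At $v\in V'\setminus(S_{R+1}\cup\{o\})$ the local combinatorics is unchanged from $G$, so $\deg(v)\ge 7$ and $\deg(f)\ge 3$ give $\ka_C'(v,f)\le \tfrac{1}{7}-\tfrac{1}{2}+\tfrac{1}{3}<0$.
\item At $v\in S_{R+1}$, the degree loses $\deg_-(v)\in\{1,2\}$ and gains $1$, hence $\deg_{G'}(v)\ge 7-2+1=6$, so $\ka_C'(v,f)\le \tfrac{1}{6}-\tfrac{1}{2}+\tfrac{1}{3}=0$.
\item At $o$, $\deg_{G'}(o)=|S_{R+1}|\ge 6$ and every incident face has degree $\ge 3$, so $\ka_C'(o,f)\le 0$.
\end{itemize}

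For the ``moreover'' statement I would build $G''$ from $G'$ by adjoining edges that split each inherited half-space face of $G'$ into closed discs, subject to the constraint that every added edge joins two vertices whose $d'(\cdot,o)$ differ by at most $1$, so that (c) is preserved. Such edges exist because every sphere $S_{R+k}$ is a cyclic simple path by Theorem~\ref{t:main}(b), allowing a triangulation of each half-space ``annular strip'' between consecutive spheres. Adjoining edges only raises vertex degrees and keeps every face of degree $\ge 3$, so (a) is inherited from the estimates above. The main obstacle is precisely this last step: carrying out the subdivision of the half-space faces simultaneously compatibly with the sphere structure (for (c)) and without creating a corner with $\deg(v)<6$ at a triangular face (for (a)). The cyclic path description of each $S_{R+k}$ together with the bounds $\deg_\pm\le 2$ leaves just enough room to do so, and this is the part of the argument that genuinely uses the full strength of the triangulation analysis from Section~\ref{s:triangulation}.
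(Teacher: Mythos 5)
Your construction is exactly the one the paper uses in Lemma~\ref{l:deg}: excise $B_R$, insert a single root $o$ joined to all of $S_{R+1}$, note that each $v\in S_{R+1}$ loses at most $\deg_-(v)\le 2$ backward edges and gains one, so $\deg'\ge 6$ everywhere, and choose $R$ large enough (via the sphere growth forced by $\deg_+\ge 3$) that $\deg'(o)=|S_{R+1}|\ge 6$; properties (b), (c) and the corner curvature estimates are then the same bookkeeping as in the paper. The genuine gap is the step ``This description gives (T1)--(T3)''. For a general planar graph $G$ (which is all you are given outside $B_R$), the inherited faces need not obviously satisfy the tessellation axioms: a priori nothing in your face description excludes an edge lying on only one face, two faces meeting in more than one edge, or a face that is not homeomorphic to a disc or a half space. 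Ruling these out from $\deg\ge 7$ (resp.\ $\deg'\ge 6$) plus planarity is a nontrivial theorem, and it is precisely what the paper invokes at this point: it first concludes $\ka_C'\le 0$ from $\deg'\ge 6$ and $\deg(f)\ge 3$, and then cites \cite[Theorem~1]{K2}, which says that non-positive corner curvature forces the graph to be strictly locally tessellating. Your argument has the logical order reversed (tessellation structure first, curvature second) and the first half is asserted rather than proved; without an argument of the type in \cite{K2} this is a hole, not a detail.

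Two smaller remarks. First, your worry in the ``moreover'' step about ``creating a corner with $\deg(v)<6$ at a triangular face'' is unfounded: adding edges only increases vertex degrees, and since $\deg'\ge 6$ is already established, any added face of degree $\ge 3$ keeps $\ka_C\le 0$ automatically; the only constraint to watch is that new edges join vertices in the same or adjacent spheres so that (c) survives, which is exactly the mechanism of Lemma~\ref{l:triangulation} (the paper settles the ``moreover'' in one sentence by closing the unbounded faces with horizontal edges). Second, you do not actually need the full cyclic-order machinery of Theorem~\ref{t:main}(b) to draw the new edges at $o$ without crossings: once $B_R$ is removed, $o$ can be placed in the resulting region and joined to the vertices of $S_{R+1}$ in the cyclic order in which they appear around that region, which is how the paper proceeds; what you do need from Theorem~\ref{t:main} is only the bound $\deg_-\le 2$ outside a finite set, as in the hypothesis of Lemma~\ref{l:deg}.
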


The theorem above directly follows from the next lemma (valid for any planar graphs) as $ \deg'\ge 6 $ implies $ \ka_{C}'\leq 0 $. The underlying idea is to replace a ball around the set with small degree by a single vertex. Since outside of this ball the backward degrees are at most two one changes the overall degree at most by one at these vertices.

\begin{lemma}\label{l:deg} Let $N\geq7$ and a planar graph $G=(V,E)$ be given such that $\mathrm{deg}\geq N$ and $\mathrm{deg}_{-}+\deg_{0}\leq4$ outside of a ball $B_{r}$. Then, there exists $r+1\leq R\leq r+1+\lceil\log_{2}(N-1)\rceil$ and a strictly locally tessellating graph $G'=(V',E')$ such that
	\begin{itemize}
		\item [(a)] $\mathrm{deg}'\geq N-1$.
		\item [(b)] $V'={(}V\setminus B_{R}{)}\cup\{o\}$ and $G'_{V'\setminus \{o\}}=G_{V\setminus B_{R}}$.
		\item [(c)] $d(\cdot,o)=d'(\cdot,o)+R$.
	\end{itemize}
	Moreover, there is a tessellation $G''=(V',E'')$ with  $E'\subseteq E''$ and $R\ge r$ such that $ \mathrm{(a)} $, $ \mathrm{(c)} $ hold.
\end{lemma}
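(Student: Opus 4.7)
The idea of the proof is to collapse a suitable distance ball $B_R$ to a single vertex $o$ and use the cyclic boundary structure that the hypotheses force. First I would establish, following the argument of Lemma~\ref{l:V_r}, that for every $R\geq r$ the set $V_R:=\{v\in B_R\mid \exists w\in V\setminus B_R,\,v\sim w\}\subseteq S_R$ is the trace of a simply closed path $p_R$ which separates the surface $\mathcal S$ into $\mathcal B(p_R)\supseteq \mathcal B_R$ and $\mathcal U(p_R)= \mathcal U_R$. The condition $\deg_-+\deg_0\leq 4$ combined with $\deg\geq N\geq 7$ forces $\deg_+\geq N-4\geq 3$ on every vertex outside $B_r$, which in particular makes this boundary cyclic path nontrivial at every step.

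The next step is to estimate the growth rate of $|V_R|$. The claim I would prove is $|V_{R+1}|\geq 2|V_R|$ for $R\geq r$: given the cyclic order of $V_R$, any $v\in V_R$ has at least $\deg_+(v)-1\geq 2$ forward neighbors that are not shared with the left neighbor of $v$ along $p_R$. Indeed by planarity two consecutive vertices of $V_R$ share at most one forward neighbor (the one sitting in the unique face of $G$ that lies in $\mathcal U(p_R)$ and touches the edge between them). Summing the contributions around $p_R$ gives the doubling. Starting from $|V_{r+1}|\geq 1$, we reach $|V_R|\geq N-1$ for the smallest $R$ with $R\leq r+1+\lceil \log_2(N-1)\rceil$, and I fix this~$R$.

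I would then define $V':=(V\setminus B_R)\cup\{o\}$, keep every edge of $G_{V\setminus B_R}$, and join $o$ to each vertex $w\in V\setminus B_R$ that had some neighbor in $V_R$; call this set $\partial^+\subseteq S_{R+1}$. The planar embedding is extended by contracting the topological disk $\mathcal B(p_R)$ to a single point carrying $o$, drawing the new edges inside this disk; this produces a locally finite planar embedding. Each face of $G'$ is either an unchanged face of $G$ lying in $\mathcal U(p_R)$ or one of the new faces incident to $o$, bounded by two consecutive edges $ow_i$, $ow_{i+1}$ together with the portion of the boundary walk of a face of $G$ that visited $V_R$. One checks (T1)--(T3) directly from this description. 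The three conclusions then follow: (b) is by construction; (c) holds since any shortest $G$-path from the root to $v\in V\setminus B_R$ crosses $V_R$ at some vertex at distance $R$ and then enters $\partial^+$ at distance $R+1$, which is exactly the length saved by the edge $ow$ in $G'$; (a) reduces to the three cases $v\notin \partial^+\cup\{o\}$ (degree unchanged, $\geq N$), $v\in\partial^+$ (degree becomes $\deg(v)-\deg_-(v)+1\geq N-1$ since $\deg_-(v)\leq 2$), and $v=o$ (degree is $|\partial^+|\geq |V_R|\geq N-1$). Finally, $G''$ is obtained from $G'$ by triangulating (or more generally cutting with chords) every face of $G'$ incident to $o$, since only these can fail to be homeomorphic to a closed disk; this only adds edges, so (a) and (c) are preserved.

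The technical obstacle I expect is the doubling estimate $|V_{R+1}|\geq 2|V_R|$ together with the companion bound $\deg_-(v)\leq 2$ for $v\in\partial^+$. Neither follows from just $\deg_-+\deg_0\leq 4$ in a single step; they rely on propagating the structural information from the inductive control of the new sphere structure~$\Sigma_R$ (cf.\ Lemma~\ref{l:sphere}) and, if needed, applying a Copy-and-Paste argument à la Lemma~\ref{l:CopyPaste} to an elementary cell around two consecutive vertices of $V_R$ to rule out the configurations where two boundary vertices share too many forward neighbors or where a forward vertex acquires more than two backward ones. This is the step where the planarity via the Gau{\ss}--Bonnet input is genuinely used, and it is also where the choice of the exponential bound $\lceil\log_2(N-1)\rceil$ on $R-r$ gets pinned down.
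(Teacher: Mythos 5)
Your construction is essentially the paper's: collapse a ball $B_{R}$ to a single vertex $o$, join $o$ to the next sphere, use the sphere--doubling forced by $\deg_{+}\ge N-4\ge 3$ to bound $R-r$ by $1+\lceil\log_{2}(N-1)\rceil$ and to give $o$ large degree, and observe that a vertex of the first remaining sphere loses only its backward edges and gains one edge to $o$. The bookkeeping for (a), (b), (c) matches the paper. The genuine gap is the sentence ``one checks (T1)--(T3) directly from this description.'' The faces of $G'$ not incident to $o$ are simply faces of the original graph $G$, and $G$ is only assumed to be a planar graph with the stated degree bounds; nothing in your description rules out, for these untouched faces, a boundary walk that pinches at a vertex (violating (T3)), an edge lying on only one face (violating (T1)), or two faces meeting in more than a vertex or an edge (violating (T2)). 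That such pathologies cannot occur under the degree hypothesis is precisely the nontrivial content of this step, and it is where the paper inserts an external input: since $\deg'\ge N-1\ge 6$ and all face degrees are at least $3$, one has $\ka_{C}'\le 0$, and then \cite[Theorem~1]{K2} is invoked to conclude that $G'$ is strictly locally tessellating. Your proposal never supplies this ingredient (your appeal to Lemma~\ref{l:sphere} and the Copy-and-Paste Lemma is only for the auxiliary bounds $\deg_{-}\le 2$ and the doubling, not for the face structure), so the tessellating property of $G'$ remains unproven as written. If you add the curvature observation and the citation of \cite[Theorem~1]{K2}, the argument closes and coincides with the paper's.

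Two smaller points. In the ``moreover'' part it is not true that only faces incident to $o$ can fail to be closed disks: $G$, and hence $G'$, may have unbounded (half-space) faces far away from $o$, and these must also be closed up; the paper does this by adding horizontal edges inside unbounded faces. Also, your chain $\deg'(o)=|\partial^{+}|\ge |V_{R}|\ge N-1$ contains the unjustified comparison $|\partial^{+}|\ge|V_{R}|$; the paper avoids this by running the doubling directly on the spheres $S_{R}$, so that $\deg'(o)=|S_{R+1}|\ge N-1$ is immediate. Your use of $\deg_{-}\le 2$ at vertices of $S_{R+1}$ is the same estimate the paper makes (it writes $\deg'\ge(\deg-2)+1$), so no complaint there beyond noting that rederiving it inside this lemma from Lemma~\ref{l:sphere} is not immediate, since that lemma is proved for triangulations.
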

\begin{proof}
	We may assume  $|S_{r+1}|\geq N-1$ and let $R=r$. (Otherwise, we observe that $|S_{r+2}|=2|S_{r+1}|$ since $\mathrm{deg}_{+}=\mathrm{deg}-(\mathrm{deg}_{-} +\mathrm{deg}_{0})\geq N-4\geq 3$. Therefore,  we can continue with $R=r+\lceil\log_{2}(N-1)\rceil$.)
	
	Now, we remove all vertices in $B_{R}(o)$ except for $o$, all edges starting and ending in $B_{R}(o)$ and connect $o$ to all vertices in $S_{R+1}$ by an edge. We denote this graph by $G'=(V',E')$.  Then $V\setminus B_{R}(o)=V'\setminus \{o\}$. Notice that  $d'(o,\cdot)+R=d(o,\cdot)$ on this set.
	The vertex degrees agree on $V\setminus B_{R+1}(o)$ and $V'\setminus B_{1}'(o)$, so, $\mathrm{deg}'\geq N$ on  $V'\setminus B_{R}'(o)$.
	Moreover, $\mathrm{deg}'(v)\ge(\mathrm{deg}(v)-2)+1\geq N-1$ for $v\in S_{1}'(o)$ and $\mathrm{deg}'(o)\geq N$. This shows that $\mathrm{deg}'\geq N-1$ on $G'$. Since $N-1\ge6$, we have $\ka_{C}\le0$. By \cite[Theorem~1]{K2} we conclude that $G'$ is strictly locally tessellating. This shows the first part of the theorem. 
	
	For the ``moreover'' part, we note that a strictly locally tessellating graph can be easily continued to a tessellation by closing unbounded faces by horizontal edges (i.e., edges connecting vertices in the same sphere).
\end{proof}

\section{General planar graphs} \label{s:general-planar}
In this section we show how to carry over the result of Section \ref{s:triangulation}  from triangulations to general planar graphs.
\subsection{Triangulation supergraphs}\label{s:proofspanning}

The results of this section are based on next lemma. It says that every planar graphs has a triangulating supergraph with the same sphere structure.

\begin{lemma}\label{l:triangulation} Let $G=(V,E)$ be a locally finite, connected, planar graph and $o\in V$. There is a locally finite, planar triangulation $G'=(V,E')$ with $E\subseteq E'$ and the same sphere structure, i.e.,  $d(v,o)=d'(v,o)$, $v\in V$.
\end{lemma}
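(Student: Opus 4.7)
The plan is to build $G'$ by adding non-crossing diagonals inside each face of $G$, subject to the restriction that every new edge $(v,w)$ satisfies $|d(o,v)-d(o,w)|\le 1$. This restriction is exactly what is needed: an edge spanning spheres further apart would create a shortcut, while a straightforward induction on the length of shortest walks shows that added edges obeying the bound cannot decrease any distance to $o$. Since diagonals are drawn inside faces of $G$, planarity is automatically preserved, and local finiteness of $G'$ will follow from the fact that each vertex bounds only finitely many faces and that only finitely many diagonals per face will be incident to any given vertex.

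The combinatorial heart of the argument treats each face separately. Let $f$ be a finite face with simple boundary cycle $v_0,\dots,v_{n-1},v_0$ of length $n\ge 4$ and set $r_i:=d(o,v_i)$. Since $v_i\sim v_{i+1}$, the cyclic sequence $(r_i)$ has steps in $\{-1,0,+1\}$. I claim that some index $i$ satisfies $|r_{i-1}-r_{i+1}|\le 1$: otherwise $r_{i+1}-r_{i-1}\in\{-2,+2\}$ for every $i$, and a short argument using the unit-bounded step condition forces the sign to be constant across $i$, which contradicts $\sum_i (r_{i+1}-r_{i-1})=0$ on the cyclic index set. At such an $i$ the chord $v_{i-1}v_{i+1}$ is a valid diagonal, and adding it splits $f$ into a triangle $v_{i-1}v_iv_{i+1}$ and a face of degree $n-1$; induction on $n$ then triangulates $f$ entirely by valid diagonals. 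Picking $i$ at a local extremum of $r$ ensures $v_{i-1}\ne v_{i+1}$ and that the chord lies properly inside $f$.

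The step I expect to be the main obstacle is the extension to faces whose boundary walk is infinite, or which revisits some vertex of $V$ -- both of which may occur for a general locally finite planar graph. For an infinite face I would fix a local minimum $v_0$ of $r$ along the boundary and iterate outward with the zig-zag $v_{-1}v_1,\,v_{-1}v_2,\,v_{-2}v_2,\,v_{-2}v_3,\,v_{-3}v_3,\dots$; each chord joins vertices in adjacent spheres, and only finitely many new neighbours are added at each vertex because the zig-zag skips alternate boundary vertices, keeping $G'$ locally finite. Boundary walks that revisit a vertex (as happens at a cut vertex or when $f$ is bounded by a bridge) are handled by selecting the local extremum at a strict turning point so that the proposed chord does not collapse to a single vertex; alternatively, one may first resolve the subfaces cut off by the bridges of $G$ contained in $f$ via the same inductive scheme, then apply the combinatorial lemma to the remaining simply connected piece. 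Assembling these per-face triangulations produces the desired supergraph $G'$ with $E\subseteq E'$ and $d(\cdot,o)=d'(\cdot,o)$.
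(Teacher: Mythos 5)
Your overall strategy coincides with the paper's: insert diagonals inside faces joining vertices whose distances to $o$ differ by at most one, note that such edges cannot change $d(\cdot,o)$, and triangulate face by face. For a bounded face with simple boundary your cyclic-sequence argument (steps in $\{-1,0,+1\}$ force an index with $|r_{i-1}-r_{i+1}|\le 1$, then clip the ear and induct) is a correct variant of the paper's choice, which instead takes a vertex minimizing $d(\cdot,o)$ on the face and joins its two boundary neighbours. Up to the shared subtlety mentioned at the end, this part is fine.

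The genuine gap is the unbounded-face case. Your zig-zag $v_{-1}v_1,\,v_{-1}v_2,\,v_{-2}v_2,\dots$ from a local minimum does \emph{not} join vertices in adjacent spheres in general, because $d(\cdot,o)$ may grow at different rates along the two ends of the boundary walk (steps of size $0$ are allowed). Concretely, let $G$ consist of two rays $o\sim p_1\sim p_2\sim\cdots$ and $o\sim q_1\sim q_2\sim\cdots$ together with the chords $q_{2i-1}\sim q_{2i+1}$ drawn on one side of the $q$-ray; then $d(o,p_k)=k$ while $d(o,q_k)=\lfloor k/2\rfloor+1$, and along the boundary walk $\dots,q_2,q_1,o,p_1,p_2,\dots$ of the unbounded face on the other side your chord $q_3p_4$ already has distance difference $2$, and $q_kp_k$ has difference of order $k/2$. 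Adding such chords creates shortcuts and destroys the sphere structure, so the construction as described does not prove the lemma. The paper handles unbounded faces differently: since each sphere contains only finitely many vertices of the boundary walk, for every $n$ larger than the minimal distance on the face one can join a vertex at distance exactly $n$ on one side of the walk to one at distance $n$ on the other side; these equal-distance chords cut the unbounded face into bounded pieces, which are then triangulated as in the finite case, and local finiteness persists because each vertex lies on only boundedly many such chords. Your infinite-face step needs to be replaced by an argument of this kind. Finally, note a subtlety you do not address (and which the paper only partially addresses via the requirement $v_0\not\sim v_1$ in its claim): the proposed diagonal might already be an edge of $G$ drawn outside the face, in which case adding it produces a multi-edge; if $G'$ is to remain a simple triangulation this case needs separate treatment.
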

\begin{proof} Adding an edge between the vertices $v,w\in V$, $v\neq w$ and $v\not\sim w$ changes the sphere structure of a graph with respect to $o$ if and only if $|d(v,o)-d(w,o)|\ge2$.
Thus, the result can be deduced from the following claim.\\
\emph{Claim:} For every face $f$ in a planar graph which is not a triangle there exist vertices $v_{0},v_{1}\in V\cap f$ with $v_{0}\not\sim v_{1}$ and $|d(v_{0},o)-d(v_{1},o)|\le1$. In the case of an unbounded face there are infinitely many such pairs of distinct vertices.\\
Proof of the claim: 
For a face $f$ which is not a triangle let $v \in V\cap f$ be such that $d(v,o)=\min\{d(w,o)\mid w\in V\cap f\} $. Let $v_{0},v_{1}\in V\cap f$ be  adjacent to $v$. Then, $d(v,o)\leq d(v_{i},o)\leq d(v,o)+1$, $i=0,1$ and, thus, $v_{0}$, $v_{1}$ satisfy the assertion. In the case of an infinite face $f$, there is a two sided infinite sequence where succeeding vertices are adjacent and every vertex of $ f $ is visited. We say that this the boundary walk of $ f $.  By the local finiteness in each sphere there are at most finitely many vertices of this boundary walk. Hence, for a given $n>\min\{d(w,o)\mid w\in V\cap f\}$ there is at least one vertex in $S_{n}(o)$ on each side of the boundary walk and these two are therefore also not adjacent. This proves the claim.
\end{proof}

\subsection{Proofs of the geometric results for general planar graphs}

By  Lem\-ma~\ref{l:triangulation},  a planar graph $G$ with $ \deg \ge 6 $ or $ \deg \ge7 $ has a triangulation supergraph $G'$ that satisfies $ \deg' \ge 6 $ or $ \deg' \ge 7 $.  

We first to turn to the proof of Cartan-Hadamard type result,  Theorem~\ref{t:main}, which we conclude from Theorem~\ref{t:triangulation}.

\begin{proof} [Proof of Theorem \ref{t:main}]
Given a planar graph $ G $, we consider the triangulation supergraph $ G' $ given by Lemma~\ref{l:triangulation} above which has the same vertex set $ V$. Let $ r=0 $ for (a) and $ r $ being the radius such that $ \deg\ge7 $ outside of $ B_{r} $. For this triangulation $ G $, we have  $ 1\leq	\deg_{0}'\leq 2  $ and $  1\leq    \deg_{-}'\leq 2 $ on $ S_{R}\cap U_{r} $ with $ R> r+\log|S_{r}| $. Now, we choose $ R $ even larger such that $ S_{R}\cap U_{r}=S_{R} $ which is possible since $ V\setminus U_{r} $ is a finite set and let $ K=V\setminus B_{R} $.
Since the supergraph $G'$ has the same sphere structure as $G$, we have $\deg_{-}\leq\deg_{-}'$ and $\deg_{0}\leq\deg_{0}'$. Note also that since the graph is connected, $\deg_-\geq 1$ on $V\setminus \{o\}$. Thus, the statement  $ 	\deg_{0}\leq 2  $ and $  1\leq    \deg_{-}\leq 2 $ follows on $ V\setminus K $. This readily gives that geodesics can be continued as $ \deg_{+}=\deg-\deg_{0}-\deg_{-}\ge 3 $ outside of $ K $. Finally, spheres outside of $ K $ in the triangulation supergraph $ G' $  are given by cyclic path which gives that the spheres in $ G $ are cyclically ordered.
\end{proof}

Next we turn to the proof of Theorem \ref{t:spanning} which says that one obtains spanning tree by changing the vertex degree at most by $ 4 $ outside of a finite set.

\begin{proof} [Proof of Theorem \ref{t:spanning}]
The proof follows along the lines for the corresponding proof for  triangulations. Let the finite set $ K $ be chosen as in Theorem~\ref{t:main}. Then spheres  are cyclically ordered outside of $ K $. We first remove the vertices within the spheres which by Theorem~\ref{t:main} changes the vertex degree at most by $ 2 $. Now the cyclic ordering of the spheres allows us to speak of the most right and the most left forward neighbor of a vertex $ v $ in a sphere $ S_{R} $ for large $ R $. By planarity only these two vertices can have more than one backward neighbor.  If the most right forward neighbor has more than one backward neighbor, call it $ w $, we remove this edge. On the other hand, any vertex $ w $ in $ S_{R'} $ with backward degree more than $1$ is a most right forward of some vertex in $ S_{R'-1} $. By Theorem~\ref{t:main}, such a vertex $ w $ satisfies $ \deg_{-}(w)\leq 2 $. By this procedure we remove all cycles in this way without changing the sphere structure of the graph. In summary for each vertex we have removed at most two edges within the same sphere, one edge to a forward and one edge to a backward neighbor which makes at most $ 4 $. This proves the statement.
\end{proof}

We next come to the unique continuation statement for eigenfunctions on general planar graphs. To this end we recall the definition of a nearest neighbor operator from Section~\ref{s:unique}.

\begin{thm}\label{t:EFcompactsupport_general} Let $G$ be a planar graph such that $\mathrm{deg}\geq 7$ outside of a finite set and let $A$ be a  nearest neighbor operator. Then, there are only finitely many linearly independent eigenfunctions of compact support.
\end{thm}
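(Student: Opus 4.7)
The plan is to lift the argument for triangulations (Theorem~\ref{t:EFcompactsupport-triang}) to general planar graphs by passing to the triangulation supergraph $G'=(V,E')$ provided by Lemma~\ref{l:triangulation}, while continuing to work with the original nearest neighbor operator $A$ on $G$. Since $E\subseteq E'$ and the sphere structures of $G$ and $G'$ coincide, the hypothesis $\mathrm{deg}\ge 7$ outside a finite set is inherited by $G'$, so Theorem~\ref{t:triangulation_gen} applies to $G'$: there exists $R_0$ such that for all $R>R_0$ the sphere $S_R$ is a simple cyclic path and every $v\in S_R$ satisfies $\deg_0'(v)=2$ and $1\le\deg_-'(v)\le 2$ in $G'$.

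I would then introduce the polar coordinate decomposition of $A$ exactly as in the proof of Theorem~\ref{t:EFcompactsupport-triang}, writing $(A\ph)_R=-E_{R-1}\ph_{R-1}+D_R\ph_R-E_R^+\ph_{R+1}$, where the matrices $E_R,D_R,E_R^+$ encode the edges and coefficients of $A$ on $G$. Granted that $E_R$ is injective for every $R>R_0$, the closing part of the proof of Theorem~\ref{t:EFcompactsupport-triang} goes through unchanged: if a compactly supported eigenfunction $\ph$ had $\ph_{R-1}\not\equiv 0$ for some $R$ large, then $\ph_R$ or $\ph_{R+1}$ would be nonzero, and iterating would force infinite support; finiteness of the space of functions supported in a fixed ball then bounds the number of linearly independent compactly supported eigenfunctions.

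The main (and essentially only new) obstacle is therefore the injectivity of $E_R$ for the operator $A$ on $G$. The direct analogue of Lemma~\ref{l:injective} yields a forward neighbor $w$ of $v$ with $\deg_-'(w)=1$, but this neighbor is constructed in $G'$ and the edge $v\sim w$ may be one of the auxiliary edges added to form the triangulation; for $A$ I need the edge to actually lie in $G$ and the sharper statement $\deg_-^{(G)}(w)=1$. I would resolve this as follows. By Theorem~\ref{t:main} applied directly to $G$, every $v\in S_R$ outside a finite set satisfies
\begin{equation*}
\deg_+^{(G)}(v)=\deg(v)-\deg_0(v)-\deg_-(v)\ge 7-2-2=3.
\end{equation*}
On the other hand, by the argument of Lemma~\ref{l:injective} applied to the triangulation $G'$, the $G'$-forward neighbors of $v$ form a contiguous sub-arc $u_1,\ldots,u_m$ of the cyclic path $S_{R+1}$ with $m\ge\deg_+^{(G)}(v)\ge 3$, and the inner vertices $u_2,\ldots,u_{m-1}$ all satisfy $\deg_-'(u_i)=1$. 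The $G$-forward neighbors of $v$ form a subset of $\{u_1,\ldots,u_m\}$ of cardinality at least $3$, so at most two of them can coincide with the endpoints $u_1,u_m$, and at least one of them, call it $w=u_i$, lies strictly in the interior of the arc. For this $w$, the inclusion $G\subseteq G'$ gives $\deg_-^{(G)}(w)\le\deg_-'(w)=1$, while the existing edge $v\sim w$ in $G$ gives $\deg_-^{(G)}(w)\ge 1$, hence $\deg_-^{(G)}(w)=1$. With this $w$ the injectivity argument from Lemma~\ref{l:injective} now runs inside $G$: given any nonzero $\ph_R$ one picks $v\in S_R$ with $\ph_R(v)\ne 0$ and the associated $w$, so that $(E_R\ph_R)(w)=a(v,w)\ph_R(v)\ne 0$, completing the proof.
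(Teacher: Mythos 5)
Your proposal is correct and follows essentially the same route as the paper: pass to the triangulation supergraph of Lemma~\ref{l:triangulation}, keep the polar decomposition of $A$ on $G$, and reduce everything to injectivity of the matrices $E_R$ via a forward neighbor $w$ of $v$ whose unique backward neighbor is $v$. The only divergence is in how the edge $v\sim w$ is guaranteed to lie in $G$: the paper simply notes that, since $d=d'$, the vertex $w$ produced by Lemma~\ref{l:injective} must have some backward neighbor in $G$, and as every $G$-backward neighbor is a $G'$-backward neighbor this forces it to be $v$; you instead invoke Theorem~\ref{t:main} for $G$ to get $\deg_{+}^{(G)}(v)\ge 3$ and pick a $G$-forward neighbor in the interior of the arc of $G'$-forward neighbors, which is a slightly longer but equally valid way to settle the same point.
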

\begin{proof}
Consider the triangulation supergraph $ G' $ of $ G $ given by Lemma~\ref{l:triangulation}.  Now, for a vertex $ v\in S_{R} $ for sufficiently large $ R $ there is a forward neighbor $ w $ such that $ \deg_{-}'(w)=1 $ by Lemma~\ref{l:injective}. Since $ v $ is the only backward neighbor of $ w $ in $ G' $, it must also be a backward neighbor in $ G $ (otherwise $ d(o,w)<d'(o,w) $ contradicting Lemma~\ref{l:triangulation}). Thus, following the argument as in the proof of Theorem~\ref{t:EFcompactsupport-triang} we conclude the statement.
\end{proof}

From this theorem we can immediately deduce that the operator $ \Delta+q $ admits at most finitely many linearly independent eigenfunctions whenever $ \ka_{\infty}=-\infty $.

\begin{proof}[Proof of  Theorem~\ref{t:main2}] Obviously $\Delta+q$ is a nearest neighbor operator and $\ka_{\infty}=-\infty$ implies $\deg\ge7$ outside of a finite set. Hence, the statement follows from Theorem~\ref{t:EFcompactsupport_general}.
\end{proof}

\section{Discrete spectrum, eigenvalue asymptotics and decay of eigenfunctions}\label{s:sparseinequality}
In this section we prove Theorem~\ref{c:spareinequality1}, Corollary~\ref{c:spareinequality2} and Theorem~\ref{t:eigenfunctions}. To this end we extend the inequalities presented in \cite{BGK,G} for planar graphs. Here, we use that planar graphs with large vertex degree outside of a finite set
are bounded perturbations of a tree. In particular, an immediate consequence of Theorem~\ref{t:spanning} is the following.

\begin{cor}\label{c:bounded_perturbation} Let $G=(V,E)$ be a planar graph such $\deg\ge 7$ outside of $B_{r}$ for some $r\ge0$. Then, there is a tree $T=(V,E')$ with $E'\subseteq E$ such that $\Delta_{T}$ is a bounded perturbation of $\Delta_{G}$.
\end{cor}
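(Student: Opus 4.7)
The plan is to deduce this directly from Theorem~\ref{t:spanning} by identifying $\Delta_G - \Delta_T$ with the Laplacian of a graph of bounded vertex degree. First I would apply Theorem~\ref{t:spanning} to the planar graph $G$ (the hypothesis $\deg \geq 7$ outside of $B_r$ is exactly case~(b) of that theorem) to produce a spanning tree $T=(V,E')$ with $E' \subseteq E$ and a finite set $K \subseteq V$ such that $\deg_G(v) - \deg_T(v) \leq 4$ for all $v \in V \setminus K$. Since $G$ is locally finite and $K$ is finite, $M := \max_{v \in K} \deg_G(v)$ is finite, so setting $d_{\max} := \max(4, M)$ we have $\deg_G(v) - \deg_T(v) \leq d_{\max}$ for every $v \in V$.

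Next I would write the difference as a graph Laplacian. Put $H := (V, E \setminus E')$; then for $\ph \in C_c(V)$,
\begin{align*}
    (\Delta_G - \Delta_T)\ph(v) = \sum_{w \in V,\, \{v,w\} \in E \setminus E'} (\ph(v) - \ph(w)) = \Delta_H \ph(v).
\end{align*}
The vertex degree of $H$ equals $\deg_G - \deg_T$, which is bounded by $d_{\max}$. A standard estimate, using Cauchy--Schwarz and the elementary inequality $|a-b|^2 \leq 2|a|^2 + 2|b|^2$, yields
\begin{align*}
    \|\Delta_H \ph\|^2 \leq 4 d_{\max}^2 \|\ph\|^2,
\end{align*}
so $\Delta_H$ extends to a bounded self-adjoint operator on $\ell^2(V)$ with $\|\Delta_H\| \leq 2 d_{\max}$.

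Finally I would conclude. Since $\Delta_G$ is the Friedrichs extension and $\Delta_T$ is its perturbation by the bounded symmetric operator $-\Delta_H$, the Kato--Rellich theorem gives that $\Delta_T = \Delta_G - \Delta_H$ is self-adjoint on $D(\Delta_G)$, i.e.~$\Delta_T$ is a bounded perturbation of $\Delta_G$. There is no serious obstacle here; all the geometric work has been done in Theorem~\ref{t:spanning}, and the remaining step is the routine fact that a graph Laplacian of uniformly bounded degree defines a bounded operator on $\ell^2(V)$.
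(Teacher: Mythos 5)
Your proposal is correct and follows exactly the route the paper intends: the paper states this corollary as an immediate consequence of Theorem~\ref{t:spanning}, and your argument simply fills in the routine details, namely that $\Delta_G-\Delta_T$ is the Laplacian of the graph of removed edges, whose degree $\deg_G-\deg_T$ is bounded (by $4$ outside a finite set and trivially on the finite set by local finiteness), hence a bounded operator on $\ell^2(V)$.
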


From Corollary~\ref{c:bounded_perturbation}, we derive the following inequality which improves the considerations of \cite{BGK} for planar graphs. These inequalities might be of interest in their own rights.

\begin{thm}\label{t:sparseinequality} Let $G$ be a planar graph such that $ \deg \ge 6 $ outside of the root or $\deg\ge7$ outside of a finite set and $q:V\to[0,\infty)$. Then,
there is $C\ge0$ such that
\begin{itemize}
  \item [(a)] for all $\eps>0$ 
\begin{align*}
    (1-\eps)(\deg+q) -\frac{1}{\eps}-C\leq\Delta+q\leq
        (1+\eps)(\deg+q) +\frac{1}{\eps}+C,
\end{align*}
  \item [(b)] for all $\ph\in C_{c}(V)$, $\|\ph\|=1$, we additionally have
\begin{align*}
    \langle\ph,(\deg+q)\ph\rangle -2\sqrt{\langle\ph,(\deg+q)\ph\rangle}-C
    &\leq
    \langle\ph,(\Delta+q)\ph\rangle\\
    &\leq
    \langle\ph,(\deg+q)\ph\rangle +2\sqrt{\langle\ph,(\deg+q)\ph\rangle}+C.
\end{align*}
\end{itemize}
\end{thm}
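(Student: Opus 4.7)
The plan is to reduce the inequalities to the case of a spanning tree by invoking Theorem~\ref{t:spanning}, prove the tree-level versions by an elementary Cauchy--Schwarz using the natural father map, and finally pass back to $G$ by absorbing the bounded perturbation errors into the constant $C$.

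By Theorem~\ref{t:spanning} (cf.\ Corollary~\ref{c:bounded_perturbation}) choose a spanning tree $T=(V,E')$ of $G$ with $E'\subseteq E$ such that $|\deg_{G}-\deg_{T}|\le C_{1}$ pointwise and $\|\Delta_{G}-\Delta_{T}\|_{\ell^{2}\to\ell^{2}}\le C_{2}$; both bounds follow because $E\setminus E'$ contributes at most $4$ incident edges per vertex outside of a finite set, and the finite exceptional set contributes only finitely many bounded corrections. On $T$, every $v\neq o$ has a unique backward neighbor $\father{v}$, and the standard identity
\begin{align*}
\langle\varphi,\Delta_{T}\varphi\rangle=\langle\varphi,\deg_{T}\varphi\rangle-2\sum_{v\neq o}\varphi(v)\varphi(\father{v}),\qquad\varphi\in C_{c}(V),
\end{align*}
combined with Cauchy--Schwarz and the estimate $\sum_{v\neq o}\varphi(\father{v})^{2}\le\langle\varphi,\deg_{T}\varphi\rangle$ (each $u\in V$ is a father at most $\deg_{T}(u)$ times) gives
\begin{align*}
\Bigl|\sum_{v\neq o}\varphi(v)\varphi(\father{v})\Bigr|\le\|\varphi\|\sqrt{\langle\varphi,\deg_{T}\varphi\rangle}.
\end{align*}
This is the tree version of (b) with $C=0$ for $q\equiv 0$; since $q\ge 0$, adding $\langle\varphi,q\varphi\rangle$ to both sides and using $\sqrt{\langle\varphi,\deg_{T}\varphi\rangle}\le\sqrt{\langle\varphi,(\deg_{T}+q)\varphi\rangle}$ extends it to $\Delta_{T}+q$.

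To transfer back to $G$, both $|\langle\varphi,(\Delta_{G}-\Delta_{T})\varphi\rangle|$ and $|\langle\varphi,(\deg_{G}-\deg_{T})\varphi\rangle|$ are $O(\|\varphi\|^{2})$; combined with $|\sqrt{a}-\sqrt{b}|\le\sqrt{|a-b|}$ applied to $a=\langle\varphi,(\deg_{T}+q)\varphi\rangle$ and $b=\langle\varphi,(\deg_{G}+q)\varphi\rangle$, all induced errors are $O(\|\varphi\|^{2})$ and collapse into a single additive constant on unit vectors, giving (b). Part (a) then follows from (b) by the elementary inequality $2\sqrt{A}\le\epsilon A+1/\epsilon$ applied to $A=\langle\varphi,(\deg_{G}+q)\varphi\rangle$ on $\|\varphi\|=1$ and homogenizing to arbitrary $\varphi\in C_{c}(V)$. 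The main obstacle is purely organizational: carefully bookkeeping how the two bounded perturbations (of $\Delta$ and of $\deg$) propagate through the square root in (b) and merge with the finite-set corrections from Theorem~\ref{t:spanning} into a single constant $C$ independent of $\varphi$ and $\epsilon$.
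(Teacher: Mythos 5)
Your argument is correct, but it takes a genuinely different route from the paper for both parts. The paper also reduces to the spanning tree of Theorem~\ref{t:spanning} (via Corollary~\ref{c:bounded_perturbation}), but then proves (a) directly by a ground state representation (Hardy-type) argument with the weight $m=\eps^{d(\cdot,o)}$ together with the unitary equivalence $\Delta_T\simeq 2\deg_T-\Delta_T$ on the bipartite tree, and proves (b) by the isoperimetric/co-area method of \cite{BGK}, using $|E'_W|\le |W|$ for finite subtrees and resolving a quadratic inequality in $\langle\ph,(\Delta_T+q')\ph\rangle$. You instead prove the tree version of (b) in one stroke from the identity $\langle\ph,\Delta_T\ph\rangle=\langle\ph,\deg_T\ph\rangle-2\sum_{v\neq o}\ph(v)\ph(\father{v})$, Cauchy--Schwarz, and the counting bound that each vertex is a father at most $\deg_T$ times, and then deduce (a) from (b) via $2\sqrt{A}\le\eps A+1/\eps$ --- a derivation the paper itself notes is possible but does not carry out. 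All steps check: the cross-term bound $\sum_{v\ne o}\ph(\father v)^2\le\langle\ph,\deg_T\ph\rangle$ is valid, $q\ge0$ lets you absorb the potential under the square root, and the perturbation bookkeeping works since $\deg_T\le\deg\le\deg_T+C_1$ pointwise and $0\le\Delta-\Delta_T\le 2C_1$ as forms (in fact, in the upper bound the square-root comparison $|\sqrt a-\sqrt b|\le\sqrt{|a-b|}$ is not even needed, as $\deg_T\le\deg$ gives monotonicity for free). What each approach buys: the paper's proof is deliberately structured to showcase the Hardy and isoperimetric techniques of \cite{G} and \cite{BGK}; yours is more elementary and self-contained, exploiting directly that a tree is $1$-sparse via the father map, at no loss in the constants relevant for the eigenvalue asymptotics.
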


\begin{rem}(a) The constant $C$ in the theorem above depends only on the norm of the Laplacian on a  neighborhood of the finite set outside of which we have $\deg\ge7$. In particular, if $\deg\ge7$ everywhere the constant can be chosen $C=0$.\\
(b) The considerations of \cite{BGK} yield an estimate that has $\pm3/\eps$ instead of the constants $\pm(1/\eps+C)$ in (a).
\end{rem}

The essential step in the proof of Theorem~\ref{t:sparseinequality} is to combine Theorem~\ref{t:spanning} with techniques developed in \cite{G,BGK}. Our rather special situation allows for a   very transparent and non-technical treatment.  For sake of being self-contained and  to illustrate the core of the techniques of both \cite{G} and \cite{BGK} we obtain (a) by the Hardy inequality techniques of \cite{G} and (b) by the isoperimetric techniques of \cite{BGK}. Observe that one could also derive (a) from (b) using some technical estimates of \cite{BGK}.

\begin{proof}[Proof of Theorem~\ref{t:sparseinequality}]
By Corollary~\ref{c:bounded_perturbation} there is a tree $T=(V,E')$ such that for the Laplacian $\Delta_{T}$ on the tree there is $C\ge0$ such that $\Delta_{T}-C\leq\Delta\leq\Delta_{T}+C$.
Denote the vertex degree in $T$ by $\deg_{T}$ and observe that by Theorem~\ref{t:spanning} we have $\deg_{T}\leq\deg\leq\deg_{T}+4$.\\
(a) For a positive function $m:V\to(0,\infty)$ let
$q_{m}:V\to \R$ be given by
\begin{align*}
    q_{m}(v)=\deg(v)-\sum_{w\sim v} \frac{m(w)}{m(v)}.
\end{align*}
By direct calculation, which is sometimes refereed to as the ground state representation, (confer \cite[Proposition~1.1]{G} or \cite[Proposition~3.2]{HK}) we obtain for $\ph\in C_{c}(V)$
\begin{align*}
\langle\ph,\Delta_T\ph\rangle&=\frac{1}{2}\sum_{v\sim w} (\ph(v)-\ph(w))^{2}\\
&= \frac{1}{2}\sum_{v\sim w}m(v)m(w) \Big(\frac{\ph(v)}{m(v)}-\frac{\ph(w)}{m(w)}\Big)^{2} +\sum_{v\in V} q_{m}(v)\ph(v)^{2}
\end{align*}
and, therefore, $ \Delta_{T}\ge q_{m}$ on $C_{c}(V)$. Now, for $\eps>0$,  we choose $m(v)=\eps^{d(v,o)}$ and observe  $q_{m}=(1-\eps)\deg_{T}-1/\eps$. Thus,
\begin{align*}
    (1-\eps)\deg_{T}-\frac{1}{\eps}\leq\Delta_{T}
\end{align*}
on $C_{c}(V)$. Since $T$ is a tree, the operator $\Delta_{T}$ is unitarily equivalent to the operator $2\deg-\Delta_{T}$ (the unitary operator is multiplication by $(-1)^{d(\cdot,o)}$). Hence, we conclude
\begin{align*}
    \Delta_{T}\leq 2\deg_{T}-\Delta_{T} \leq (1+\eps)\deg_{T}+\frac{1}{\eps}
\end{align*}
on $C_{c}(V)$. Statement (a) follows now from   $\Delta_{T}-C\leq  \Delta\leq
\Delta_{T} +C$ and $\deg_{T}\leq\deg\leq\deg_{T}+4$ discussed in the beginning of the proof and the assumption $q\ge0$.\smallskip

(b) By Theorem~\ref{t:non_positive_curvature} the tree $T$ has degree larger than $2$ outside of a finite set $K$. Since $T$ is a spanning tree and, hence, connected it has degree greater or equal to $1$ everywhere. We define
\begin{align*}
    d_{T}=\deg_{T}+q' \quad\mbox{with}\quad q'=q+1_{K}.
\end{align*}
By the discussion above $d_{T}\ge \deg_{T}+1_{K}\ge 2$.
We further notice that on a tree any subgraph $T_{W}=(W,E'_{W})$ of $T=(V,E')$ induced by a finite set $W\subseteq V$ satisfies $|E'_{W}|\leq |W| $
(confer \cite[Lemma~6.2]{BGK}). This implies $$d_{T}(1_{W})=2|E'_{W}|+|\partial W|+q'(1_{W})\leq2|{W}|+|\partial W|+q'(1_{W}),$$
where $\partial W=\{(v,w)\in W\times V\setminus W\mid v\sim w\}$ and $q'(\ph)=\sum_{v\in V}\ph(v)^{2}q'(v)$, $\ph\in C_{c}(V)$.
Let $\ph\in C_{c}(V)$, $\|\ph\|=1$.
Using  an area and a co-area formula (cf.\ \cite[Theorem~12 and Theorem~13]{KL2}) with $\Om_{t}:=\{v\in V\mid |\ph(v)|^{2}>t\}$,
and   the discussion above, we obtain
\begin{align*}
 \langle& \ph,(d_{T}-2)\ph\rangle
=\int_{0}^{\infty}\Big(d_{T}(1_{\Om_{t}})-2|\Om_{t}|\Big)dt
\leq \int_{0}^{\infty}|\partial\Om_{t}|+q'(1_{\Om_{t}})dt \\
&=\frac{1}{2}\sum_{v\sim w} \left|\ph(v)^{2}-\ph(w)^{2}\right| + q'(\ph)=\frac{1}{2}\sum_{v\sim w} \left|(\ph(v)-\ph(w))(\ph(v)+\ph(w))\right| + q'(\ph)
\\
&\leq \frac{1}{2} \left(\sum_{v\sim
    w}|\ph(v)-\ph(w)|^2+ 2q'(\ph)\right)^{1/2}\left(\sum_{v\sim
    w}|\ph(v)+\ph(w)|^2+ 2q'(\ph)\right)^{1/2}\\
&=   \langle \ph,(\Delta_T+q')\ph\rangle^{\frac{1}{2}} \big(2\langle\ph,d_{T}\ph\rangle- \langle\ph,(\Delta_T+q')\ph\rangle\big)^{\frac{1}{2}}.
\end{align*}
Since $d_{T}\ge2$, we have $\langle \ph, (d_{T}-2)\ph\rangle\ge 0$ and, thus, we can square both sides of the inequality to obtain after reordering the terms,
\begin{align*}
\langle\ph,(\Delta_T+q')\ph\rangle^{2}-2\langle \ph,d_{T}
\ph\rangle \langle \ph,(\Delta_T+q')\ph\rangle+\langle \ph,(d_{T}-2)\ph\rangle^2\le0.
\end{align*}
Resolving the inequality and using $\|\ph\|=1$ yields
\begin{align*}
    \langle \ph,d_{T}\ph\rangle -2\sqrt{\langle \ph, d_{T}\ph\rangle-1}\leq
       \langle\ph,(\Delta_T+q')\ph\rangle\leq
          \langle d_{T}\ph,\ph\rangle +2\sqrt{\langle \ph,d_{T}\ph\rangle-1}.
\end{align*}
Now, we further observe that
\begin{align*}
   \sqrt{\langle \ph, d_{T}\ph\rangle-1}=    \sqrt{\langle \ph, (\deg_{T}+q+1_{K})\ph\rangle-1}\leq\sqrt{\langle \ph, (\deg_{T}+q)\ph\rangle}.
\end{align*}
Thus, (b) follows from the inequalities subtracting $\langle \ph, 1_{K}\ph\rangle$ from the inequalities and using $\Delta_{T}-C\leq\Delta\leq\Delta_{T}+C$ and $d_{T}\leq\deg+q\leq d_{T}+C$.
\end{proof}

Next, we turn to the proof of Theorem~\ref{c:spareinequality1}.

\begin{proof}[Proof of Theorem~\ref{c:spareinequality1}]
For a potential $q\in K_{\al}$, $\al\in(0,1)$, there is $C_{\al}$ such that $q_{-}\leq\al(\Delta+q_{+})+C_{\al}$. We deduce from Theorem~\ref{t:sparseinequality}~(a)  (confer \cite[Lemma~A.3]{BGK})
\begin{align*}
\frac{(1-\al)(1-\eps)}{(1-\al(1-\eps))}(\deg+q) -\frac{(1-\al)(1/\eps+C)+\eps C_{\al}}{(1-\al(1-\eps))}\leq \Delta+q, \quad \mbox{ on $C_{c}(X)$}
\end{align*}
 for all $\eps>0$.\\
By an application of the  Min-Max-Principle, Theorem~\ref{t:minmax},
the  spectrum of $\Delta+q$ is purely discrete if the spectrum of $\deg+q$ is purely discrete. On the other hand, if there are vertices $v_{n}$  such that $(\deg+q)(v_{n})\leq C$ for some $ C $, then $\langle \Delta 1_{\{v_{n}\}},1_{\{v_{n}\}}\rangle=(\deg+q)(v_{n})\leq C$, $n\ge0$. By a Persson-type theorem, \cite[Proposition~2.1]{HKW} we conclude that the bottom of the essential spectrum of $\Delta+q$ is bounded from above by $C$. Hence, the essential spectrum of $\Delta+q$ is non-empty. We summarize that the spectrum of  $\Delta+q$
is purely discrete if and only if $\sup_{K\subset V\,\mathrm{finite}}\inf_{v\in V}(\deg(v)+q(v))=\infty$. Since
\begin{align*}
    -\frac{\deg(v)}{2}\leq\ka(v)\leq 1-\frac{\deg(v)}{6}
\end{align*}
(due to $\deg(f)\ge 3$), this in turn is equivalent to
 $$ \sup_{K\subset V\,\mathrm{finite}}\inf_{v\in V}(-\ka(v)+q(v))=\infty.$$
Next, we assume $q\ge0$. The eigenvalue asymptotics follow directly from  Theorem~\ref{t:sparseinequality}~(b) and  the  Min-Max-Principle, Theorem~\ref{t:minmax} as $x\mapsto x-2\sqrt{x}$ is continuous and monotone increasing on $[1,\infty)$ and $\lm_{0}(\deg+q)\ge1$.
\end{proof}

Now, we turn to the proof of Corollary~\ref{c:spareinequality2}.
\begin{proof}[Proof of Corollary~\ref{c:spareinequality2}]
If the face degree is constantly $k$  outside of a compact set $K\subseteq X$, then
\begin{align*}
    \ka(v)=1-\frac{k-2}{2k}\deg(v),
\end{align*}
for $v\in V\setminus K$.  The eigenvalue asymptotics follow now  from  Theorem~\ref{c:spareinequality1}.
\end{proof}

{Finally, we prove Theorem~\ref{t:eigenfunctions} on the decay of eigenfunctions.} The proof we give here is similar to the techniques developed of \cite{KePo}. However,  for the convenience of the reader we include a short proof. Indeed, in our situation of planar graphs with large degree, the proof simplifies even substantially.
\begin{proof}[Proof of Theorem~\ref{t:eigenfunctions}]
A direct computation, often referred to as the ground state representation as used in the proof of Theorem~\ref{t:sparseinequality} above (confer \cite[Proposition~1.1]{G} or \cite[Proposition~3.2]{HK}), gives for  {$\ph$ with compact support}, 
$u \in  D(\Delta) $ and the basic estimate $ 2ab
\leq a^{2}+b^{2} $
\begin{align*}
\frac{1}{2}\sum_{x,y, y\sim x} u(x)^{2}(\ph(x)- \ph(y))^2&\ge\frac{1}{2} \sum_{x,y,  x\sim y} u(x) u(y) ( \ph(x) - \ph(y)) ^2\\
	&= \langle \ph u, \Delta( \ph u)\rangle-\langle \ph^2 u, \Delta  u\rangle.
\end{align*}
If  $u\in  D(\Delta) $ is an eigenvector of $\Delta$ with eigenvalue $\lambda$, we estimate together with  the form bound  from Theorem~\ref{t:sparseinequality}~(a) 
\begin{align*}
\frac{1}{2}\sum_{x,y, y\sim x} u(x)^{2}(\ph(x)- \ph(y))^2 \ge  \langle((1-{\eps})\deg-C_{\eps}-\lambda ) \ph u, \ph u \rangle
 \end{align*}
 {for  $0<\eps<1$ and }with $C_{\eps} =1/\eps +C$  some $ C\ge0 $.
We define for $ N \ge 0$
\begin{align*}
	\ph_{N}= 1_{B_{N}}\al^{d(\cdot,o)} + 1_{B_{2N}\setminus B_{N}}\al^{2N-d(\cdot,o)}
\end{align*}
with 	$ \alpha= \sqrt{2}(1-\ve) {+} 1$
and observe that  {for $\eps >0$ small enough},
\begin{align*}
	\frac{1}{2}\sum_{y, y\sim \cdot}(\ph_{N}(\cdot)- \ph_{N}(y))^2\leq (1-\eps)^{2} \ph^{2}_{N}\deg
	+{\frac{1}{2}} 1_{ S_{2N+1}}\deg_{-}
\end{align*}
where we used that $ \ph_{N}(x)-\ph_{N}(y)=1 $ for $ x\in S_{2N} $ and $ y\in S_{2N+1} $.
Combining this with the estimate above, we obtain after reordering the terms
\begin{align*}
	\frac{1}{2}\sum_{x\in {S_{2N+1}}}\deg_{-}(x)u(x)^{2}\ge\langle (\eps(1-{\eps})\deg-C_{\eps}-\lambda){\ph_N u, \ph_N u \rangle}.
\end{align*}
Since we assumed that $ \deg $ becomes arbitrarily large outside of finite sets, there is a finite set $ K $ {and a constant $c_\ve >0$} such that $ \eps(1-\eps)\deg-C_\eps-\lambda \ge  {c_\ve \deg } $ outside of $ K $. Furthermore, by Theorem~\ref{t:main}, $\deg_{-}  $ is a bounded function.  Thus,
there is $ C'=C'_{\eps} $ such that for all $ N $
	\begin{align*}
		C'\|u\|^{2}\ge \sum_{x\in X}\deg(x)\ph_{N}^{2}(x)u^{2}(x)\ge \sum_{x\in B_{N}}\deg(x)\al ^{2d(o,x)}u^{2}(x).
	\end{align*}
By monotone convergence, we conclude
\begin{align*}
		{C'}\|u\|^{2}\ge \sum_{x\in X}\deg(x)\al ^{2d(o,x)}u^{2}(x).
\end{align*}
 This finishes the proof.
\end{proof}

\appendix
\section{Applications of the Min-Max-Principle}
In this appendix we shortly discuss an application of the Min-Max-Principle to (non-linear) functions of operators.

Let $H$ be a Hilbert space with norm $\|\cdot\|$. For a quadratic form $Q$, denote the form norm by $\|\cdot\|_{Q}:=\sqrt{Q(\cdot)+\|\cdot\|^{2}}$. For a
selfadjoint operator $A$ which is bounded from below, we denote the bottom of the essential
spectrum by $\lm_{0}^{\mathrm{ess}}(A)$. Let $n(A)\in \N_{0}\cup\{\infty\}$ be the dimension of the range of the spectral projection of the interval $(-\infty,\lm_{0}^{\mathrm{ess}}(A))$.  Whenever $\lm_{0}(A)<\lm_{0}^{\mathrm{ess}}(A)$, we denote the eigenvalues below $\lm_{0}^{\mathrm{ess}}(A)$  by $\lm_{n}(A)$, for $0  \leq n \leq n(A)$,  in increasing  order counted with multiplicity.

\begin{thm}\label{t:minmax} Let $(Q_{1},D(Q_{1}))$ and $(Q_{2},D(Q_{2}))$ be closed symmetric non-negative quadratic forms with a common form core $D_{0}$  and let the corresponding selfadjoint operators be denoted by $A_{1}$ and $A_{2}$.
Assume there are continuous monotone increasing functions $f_{1},f_{2}:[\lm_{0}(A_{2}),\infty)\to\R$ such  for all $\ph\in D_{0}$ with $\|\ph\|=1$
\begin{align*}
    f_{1}(Q_{2}(\ph))\leq Q_{1}\leq    f_{2}(Q_{2}(\ph)).
\end{align*}
Then, for $0 \leq n \leq \min(n(A_1), n(A_2))$,
$$f_{1}(\lm_{n}(A_{2}))\leq \lm_{n}(A_{1})\leq
    f_{2}(\lm_{n}(A_{2})).$$
Moreover, if $\lim_{r\to\infty}f_{1}(r)=\lim_{r\to\infty}f_{2}(r)=\infty$, then  $\si_{\mathrm{ess}}(A_{1})=\emptyset$ if and only if $\si_{\mathrm{ess}}(A_{2})=\emptyset$.
\end{thm}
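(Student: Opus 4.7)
The plan is to establish both parts of the theorem in a unified way through the extended Min-Max Principle. Recall that for a closed, non-negative, symmetric form $Q$ with associated selfadjoint operator $A$ bounded below, the generalized $n$-th eigenvalue
\begin{align*}
\mu_{n}(A) := \inf_{\substack{V\subseteq D(Q)\\ \dim V = n+1}}\;\sup_{\substack{\varphi\in V\\ \|\varphi\|=1}} Q(\varphi)
\end{align*}
coincides with $\lm_{n}(A)$ when $n\le n(A)$ and equals $\lm_{0}^{\mathrm{ess}}(A)$ otherwise. So it suffices to prove
\begin{align*}
f_{1}(\mu_{n}(A_{2}))\;\le\;\mu_{n}(A_{1})\;\le\; f_{2}(\mu_{n}(A_{2}))
\end{align*}
for every $n\ge 0$, from which both the eigenvalue comparison and the essential-spectrum equivalence will fall out.

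The first step is to reduce the Min-Max to subspaces contained in the common form core $D_{0}$. Since $D_{0}$ is a form core for $Q_{i}$, any finite-dimensional $V\subseteq D(Q_{i})$ admits a basis that can be approximated arbitrarily well in the form norm $\|\cdot\|_{Q_{i}}$ by elements of $D_{0}$; on a finite-dimensional subspace the form norm and the Hilbert norm are equivalent, so the Rayleigh quotient $\varphi\mapsto Q_{i}(\varphi)/\|\varphi\|^{2}$ depends continuously on the basis, which gives
\begin{align*}
\mu_{n}(A_{i})\;=\;\inf_{\substack{V\subseteq D_{0}\\ \dim V=n+1}}\;\sup_{\substack{\varphi\in V\\ \|\varphi\|=1}} Q_{i}(\varphi),\qquad i=1,2.
\end{align*}

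With both Min-Max infima now taken over subspaces of $D_{0}$, I fix an $(n+1)$-dimensional $V\subseteq D_{0}$. The unit sphere of $V$ is compact, and the continuity of $Q_{2}$ on $V$ (again by norm equivalence) together with the monotonicity and continuity of $f_{1}$ yields
\begin{align*}
\sup_{\substack{\varphi\in V\\\|\varphi\|=1}} Q_{1}(\varphi)\;\ge\;\sup_{\substack{\varphi\in V\\\|\varphi\|=1}} f_{1}(Q_{2}(\varphi))\;=\;f_{1}\!\Bigl(\,\sup_{\substack{\varphi\in V\\\|\varphi\|=1}} Q_{2}(\varphi)\Bigr).
\end{align*}
Taking the infimum over $V\subseteq D_{0}$ and pulling the monotone continuous function $f_{1}$ through the infimum gives $\mu_{n}(A_{1})\ge f_{1}(\mu_{n}(A_{2}))$; the upper bound $\mu_{n}(A_{1})\le f_{2}(\mu_{n}(A_{2}))$ is shown symmetrically. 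Specialising to $n\le\min(n(A_{1}),n(A_{2}))$, the $\mu_{n}$'s are genuine eigenvalues and the stated inequality follows. For the final claim I use that $\sigma_{\mathrm{ess}}(A)=\emptyset$ is equivalent to $\mu_{n}(A)\to\infty$. If $\mu_{n}(A_{2})\to\infty$, the assumption $f_{1}(r)\to\infty$ yields $\mu_{n}(A_{1})\ge f_{1}(\mu_{n}(A_{2}))\to\infty$; conversely, if $\mu_{n}(A_{1})\to\infty$, then $f_{2}(\mu_{n}(A_{2}))\to\infty$, and continuity of $f_{2}$ on $[\lm_{0}(A_{2}),\infty)$ (hence boundedness on bounded sets) forces $\mu_{n}(A_{2})\to\infty$.

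The main technical point is the reduction step: the pointwise inequality $f_{1}(Q_{2})\le Q_{1}\le f_{2}(Q_{2})$ is assumed only on $D_{0}$, whereas the Min-Max principle is naturally formulated on the full form domains $D(Q_{1})$ and $D(Q_{2})$, which may differ and coincide a priori only on the core. Carrying out the approximation in each of the two distinct form norms independently is the crucial observation that makes the argument go through; after that, the manipulation of suprema and infima through the monotone continuous functions $f_{1},f_{2}$ is routine.
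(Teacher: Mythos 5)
Your proof is correct, and it rests on the same two pillars as the paper's: the Min-Max Principle and the observation that a continuous monotone increasing function commutes with infima (and suprema). The implementation differs in which variant of Min-Max you use. The paper works with the sup-inf (Courant) form in which the infimum is taken over unit vectors of the core $D_{0}$ orthogonal to a finite family, so the restriction to $D_{0}$ — where alone the hypothesis $f_{1}(Q_{2})\leq Q_{1}\leq f_{2}(Q_{2})$ is available — is built into the formulation from the start; it then evaluates the outer supremum at the eigenfunctions of $A_{1}$ and $A_{2}$, which is why its chain of inequalities is phrased only for $n\leq\min(n(A_{1}),n(A_{2}))$. You instead use the inf-sup form over $(n+1)$-dimensional subspaces of the full form domains, which forces you to supply the extra (standard, and correctly sketched) approximation lemma that the Min-Max value is unchanged when the infimum is restricted to subspaces of a form core — carried out separately in the two form norms, as you rightly stress. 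What your route buys is that the inequality $f_{1}(\mu_{n}(A_{2}))\leq\mu_{n}(A_{1})\leq f_{2}(\mu_{n}(A_{2}))$ holds for \emph{all} $n$ at the level of the generalized Min-Max values, without ever invoking existence of eigenfunctions, which makes the essential-spectrum equivalence (including the converse direction via boundedness of $f_{2}$ on bounded sets) fall out cleanly; what the paper's route buys is that no core-approximation argument is needed at all. Both are complete proofs.
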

\begin{proof}
Letting
\begin{align*}
    \mu_{n}(A):=\sup_{\ph_{1},\ldots,\ph_{n}\in H} \inf_{\psi\in\{\ph_{1},\ldots,\ph_{n}\}^{\perp}\cap D_0,\ \|\psi\|=1}{Q(\psi)},
\end{align*}
for a selfadjoint operator $A$ with form $Q$ and $D_{0}\subseteq D(Q)$, we have by the Min-Max-Principle \cite[Chapter XIII.1]{RS} $\mu_{n}(A)=\lm_{n}(A)$ if $\mu_{n}(A)<\lm_{0}^{\mathrm{ess}}(A)$
and $\mu_{n}(A)=\lm_{0}^{\mathrm{ess}}(A)$ otherwise, $n\ge0$.
Now, observe that for a continuous monotone increasing function $f:[0,\infty)\to\R$ and a function $g:X\to[0,\infty)$ defined on an arbitrary set $X$  we have
\begin{align*}
    \inf_{x\in X}f(g(x))=f\left(\inf_{x\in X} g(x)\right).
\end{align*}
Now, assume $n\leq\min\{n(A_{1}),n(A_{2})\}$ and  let $\ph_{0}^{(j)},\ldots,\ph_{n}^{(j)}$ be the eigenfunctions of $A_{j}$ to $\lm_{0}(A_{j}),\ldots,\lm_{n}(A_{j})$ and denote $$ U_{j}^{(n)}:=\{\ph_{1}^{(j)},\ldots,\ph_{n}^{(j)}\}^{\perp}\cap \{\psi\in D_0\mid \|\psi\|=1\},\qquad j=1,2 .$$
We apply the discussion above with $f=f_{1}$ and $$ g=g_{1}:U_{2}^{(n)}\to[0,\infty),\qquad \psi\mapsto {Q_{2}(\psi)} ,$$ first and  $f=f_{2}$ and $$ g=g_{2}:U_{1}^{(n)}\to[0,\infty),\qquad\psi\mapsto {Q_{2}(\psi)}, $$   later on, to obtain
\begin{align*}
f_{1}(\lm_{n}(A_{2})) &=f_{1}\left(\inf_{\psi\in U^{(n)}_{2}}{Q_{2}(\psi)}\right) =\inf_{\psi\in U^{(n)}_{2}} f_{1}\Big({Q_{2}(\psi)}\Big)
\leq\inf_{\psi\in U^{(n)}_{2}} {Q_{1}(\psi)}\leq\mu_{n}(A_{1})\\
&=\lm_{n}(A_{1})
=\inf_{\psi\in U^{(n)}_{2}} {Q_{1}(\psi)}
\leq \inf_{\psi\in U^{(n)}_{2}} f_{2}\Big({Q_{2}(\psi)}\Big)= f_{2}\left(\inf_{\psi\in U^{(n)}_{2}} {Q_{2}(\psi)}\right) \\
&\leq f_{2}(\mu_{n}(A_{2}))= f_{2}(\lm_{n}(A_{2})).
\end{align*}
This
directly implies the first statement. Assuming now  $\lm_{0}^{\mathrm{ess}}(A_{2})=\infty$ implies $n(A_{2})=\infty$ and $\lim_{n\to\infty}\lm_{n}(A_{2})=\infty$ and, therefore,
 $\lim_{n\to\infty}f_{1}(\lm_{n}(A_{2}))=\infty$, by the assumption on $f_{1}$. Hence, by the above we get $\lm_{0}^{\mathrm{ess}}(A_{1})=\infty$. The other implication follows analogously.
\end{proof}

\textbf{Acknowledgement}.
MK enjoyed the hospitality of Bordeaux University when this work started. Moreover, MK is grateful to Daniel Lenz for inspiring discussions on the subject and acknowledges the financial support of the German Science Foundation (DFG), Golda Meir Fellowship, the Israel Science Foundation (grant no. 1105/10 and  no. 225/10)  and  BSF grant no. 2010214.

\end{document}